\def\MR#1{} 
\def\itemNum$#1${\item $\displaystyle#1$
   \hfill\refstepcounter{equation}(\theequation)}
\providecommand\@dotsep{5}
\renewcommand{\listoftodos}[1][\@todonotes@todolistname]{%
  \@starttoc{tdo}{#1}}
\newtheorem{Lem}{Lemma}[section]
\newtheorem{Prop}[Lem]{Proposition}
\newtheorem{Def}[Lem]{Definition}
\theoremstyle{plain}
\newtheorem{Thm}[Lem]{Theorem}
\newtheorem{Cor}[Lem]{Corollary}
\theoremstyle{plain}
\newtheorem{thmIntro}{Theorem}
\theoremstyle{definition}
\newtheorem{Rem}[Lem]{Remark}
\newtheorem{Rems}[Lem]{Remarks}
\theoremstyle{definition}
\newenvironment{Ex}
  {\pushQED{\qed}\examplex}
  {\popQED\endexamplex}
\newlist{todolist}{itemize}{2}
\setlist[todolist]{label=$\square$}
\newcommand{\C}{\mathbb{C}}
\newcommand{\R}{\mathbb{R}}
\newcommand{\quat}{\mathbb{H}}
\newcommand{\Hom}{\textup{\text{Hom}}}
\newcommand{\Rep}{\textup{\text{Rep}}}
\newcommand{\RRep}{\textup{\text{RRep}}}
\newcommand{\Irr}{\textup{\text{Irr}}}
\newcommand{\Bord}{\textup{\text{Bord}}}
\newcommand{\Brane}{\mathcal{B}}
\newcommand{\cat}{\mathcal{C}}
\newcommand{\Vect}{\textup{\text{Vect}}}
\newcommand{\id}{\textup{\text{id}}}
\newcommand{\refl}{\textup{\text{refl}}}
\newcommand{\Ctwo}{C_2}
\newcommand{\op}{\textup{\text{op}}}
\newcommand{\ev}{\textup{\text{ev}}}
\newcommand{\Aut}{\textup{\text{Aut}}}
\newcommand{\tr}{\textup{\text{tr}}}
\newcommand{\Gh}{\hat{G}}
\newcommand{\G}{G}
\newcommand{\Pin}{\textup{\text{Pin}}}
\newcommand{\git}{/\!\!/}
\newcommand{\bb}{\tau}
\newcommand{\TFT}{\mathcal{Z}}
\newcommand{\ori}{\textup{\text{or}}}
\newbox\xrat@below
\newbox\xrat@above
\newcommand{\xrightarrowtail}[2][]{%
  \setbox\xrat@below=\hbox{\ensuremath{\scriptstyle #1}}%
  \setbox\xrat@above=\hbox{\ensuremath{\scriptstyle #2}}%
  \pgfmathsetlengthmacro{\xrat@len}{max(\wd\xrat@below,\wd\xrat@above)+.6em}%
  \mathrel{\tikz [>->,baseline=-.75ex]
                 \draw (0,0) -- node[below=-2pt] {\box\xrat@below}
                                node[above=-2pt] {\box\xrat@above}
                       (\xrat@len,0) ;}}
\begin{document}
\title[Real Frobenius--Schur indicators]{Frobenius--Schur indicators for twisted Real representation theory and two dimensional unoriented topological field theory}

\author[L. Gagnon-Ririe]{Levi Gagnon-Ririe}
\address{Department of Mathematics and Statistics \\ Utah State University\\
Logan, Utah 84322 \\ USA}
\email{gagnon.levi@gmail.com}

\author[M.\,B. Young]{Matthew B. Young}
\address{Department of Mathematics and Statistics \\ Utah State University\\
Logan, Utah 84322 \\ USA}
\email{matthew.young@usu.edu}

\date{\today}

\keywords{Real representation theory. Topological field theory.}
\subjclass[2020]{Primary: 20C25; Secondary 81T45.}

\begin{abstract}
We construct a two dimensional unoriented open/closed topological field theory from a finite graded group $\pi:\Gh \twoheadrightarrow \{1,-1\}$, a $\pi$-twisted $2$-cocycle $\hat{\theta}$ on $B \Gh$ and a character $\lambda: \Gh \rightarrow U(1)$. The underlying oriented theory is a twisted Dijkgraaf--Witten theory. The construction is based in the $(\Gh, \hat{\theta},\lambda)$-twisted Real representation theory of $\ker \pi$. In particular, twisted Real representations are boundary conditions and the generalized Frobenius--Schur element is its crosscap state.
\end{abstract}

\maketitle

\setcounter{tocdepth}{1}


\setcounter{footnote}{0}

\section*{Introduction}
\addtocontents{toc}{\protect\setcounter{tocdepth}{1}}
\label{sec:Intro}

Associated to a finite group $\G$ and a $U(1)$-valued $2$-cocycle $\theta$ on its classifying space $B\G$ is a two dimensional topological gauge theory known as Dijkgraaf--Witten theory \cite{dijkgraaf1990}. This is an oriented open/closed topological quantum field theory (TFT) $\TFT_{(\G,\theta)}$ with boundary conditions the category $\Rep^{\theta}(\G)$ of finite dimensional $\theta$-twisted complex representations of $\G$ \cite{freed1994,moore2006}. In particular, $\TFT_{(\G,\theta)}$ assigns a partition function to each compact oriented $2$-manifold with boundary components labelled by twisted representations. Open/closed TFT was introduced as a framework to axiomatize the structure of topological D-branes in string theory \cite{lazaroiu2001,kapustin2004,moore2006} and has found a variety of applications in pure mathematics \cite{costello2007,blumberg2009,abouzaid2010}. The open/closed structure of Dijkgraaf--Witten theory plays an important role in the descriptions of D-branes in orbifold string theory \cite{dijkgraaf1990}, generalized symmetries in quantum field theory \cite{sharpe2015,huang2021} and boundary degrees of freedom in topological phases of matter \cite{shiozaki2017}.

Open/closed TFTs on unoriented---and possibly non-orientable---manifolds play a central role in orientifold string theory \cite{hori2008} and related mathematics \cite{mbyoung2020,freed2021,georgieva2021,noohiYoung2022}. In condensed matter physics, unoriented TFTs in general, and Dijkgraaf--Witten theory in particular, model topological phases of matter with time reversal symmetry \cite{freed2013b,kapustin2017,barkeshli2020}. The main result of this paper is an algebraic construction of a class of unoriented lifts of the oriented open/closed Dijkgraaf--Witten theories $\TFT_{(\G,\theta)}$. 

\begin{thmIntro}[{Theorem \ref{thm:twistRealRepTFT}}]
\label{thm:twistRealRepTFTIntro}
A triple $(\Gh, \hat{\theta}, \lambda)$ consisting of a short exact sequence of finite groups
\[
1 \rightarrow \G \rightarrow \Gh \xrightarrow[]{\pi} \Ctwo = \{1,-1\} \rightarrow 1,
\]
a $\pi$-twisted $2$-cocycle $\hat{\theta}$ on $B\Gh$ which restricts to $\theta$ on $B \G$ and a character $\lambda: \Gh \rightarrow U(1)$ defines a two dimensional unoriented open/closed topological field theory $\TFT_{(\Gh,\hat{\theta},\lambda)}$ whose oriented sector is a subtheory of $\TFT_{(\G,\theta)}$.
\end{thmIntro}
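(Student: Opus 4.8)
The plan is to reduce the construction to the algebraic classification of two dimensional unoriented open/closed TFTs and then to extract the required data from the triple $(\Gh,\hat\theta,\lambda)$. Recall that such a theory amounts to (i) a commutative Frobenius algebra $C$ --- the state space of the circle --- together with an involutive algebra automorphism $\phi$ implementing orientation reversal and a distinguished \emph{crosscap element} $Q\in C$; (ii) a Calabi--Yau category $\Brane$ of boundary conditions equipped with a contravariant involution $P$ compatible with the Calabi--Yau pairing, together with a crosscap morphism for each object; and (iii) a bulk--boundary map $C\to\End(V)$ for each $V\in\Brane$, all subject to the relations in the generators-and-relations presentation of the unoriented open/closed bordism category. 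For the underlying oriented data I would take that of $\TFT_{(\G,\theta)}$: the algebra $C=Z(\C^\theta[\G])$, the centre of the twisted group algebra with its standard Frobenius form; $\Brane=\Rep^\theta(\G)$ with its natural Calabi--Yau structure; and bulk--boundary maps given by twisted characters. This makes the assertion that the oriented sector of $\TFT_{(\Gh,\hat\theta,\lambda)}$ is a subtheory of $\TFT_{(\G,\theta)}$ automatic.

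For the extra unoriented structure I would use the $(\Gh,\hat\theta,\lambda)$-twisted Real representation theory of $\G$ developed above. The involution $\phi$ on $Z(\C^\theta[\G])$ is induced by twisted conjugation by a lift $\hat g_0\in\Gh\smallsetminus\G$; independence of $\hat g_0$ and the relation $\phi^2=\id$ follow from the $\pi$-twisted cocycle condition on $\hat\theta$ together with $\lambda$. The contravariant involution $P$ on $\Rep^\theta(\G)$ sends a representation $V$ to the $\lambda$-corrected complex conjugate $\overline V$ with $\G$-action twisted by conjugation by $\hat g_0$, with the coherence isomorphism $P^2\simeq\id$ again supplied by $\hat\theta$ and $\lambda$; a $P$-fixed-point structure on $V$ is precisely a twisted Real structure. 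The crosscap element $Q$ is the \emph{generalized Frobenius--Schur element}, the $\lambda$-weighted sum over $\Gh\smallsetminus\G$ of the appropriate cocycle multiple of $\hat g^2$, which is shown above to lie in $Z(\C^\theta[\G])$; the crosscap morphism of a twisted Real representation is its generalized Frobenius--Schur automorphism. The bulk--boundary maps are inherited from the oriented theory, and one checks directly that they intertwine $\phi$ with $P$.

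It then remains to verify the defining relations. The oriented relations hold because the oriented data is that of $\TFT_{(\G,\theta)}$, while the relations expressing that $\phi$ is an involutive Frobenius automorphism fixing $Q$, and that $P$ is compatible with composition and the Calabi--Yau trace, are formal consequences of the constructions. The substantive relations are the closed \emph{crosscap relation} between $Q$, $\phi$ and the handle operator, coming from Dyck's homeomorphism $\mathbb{RP}^2\#\mathbb{RP}^2\#\mathbb{RP}^2\cong\mathbb{RP}^2\#T^2$; the \emph{unoriented Cardy condition} relating the composite of the two crosscap morphisms of a boundary object to the handle endomorphism of $\End(V)$; and the bulk--crosscap (Möbius) identity relating $Q$, the bulk--boundary map and the crosscap morphism. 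I expect all of these to follow from the orthogonality relations and the classification of $(\Gh,\hat\theta,\lambda)$-twisted Real irreducibles established earlier: they are the twisted, $\Ctwo$-graded counterparts of the classical Frobenius--Schur theory, the key input being that $Q$ acts on a twisted Real irreducible through its generalized Frobenius--Schur indicator and that these indicators enter the relevant topological amplitudes with the correct signs. The main obstacle --- and the real content of the proof --- is precisely this last point: tracking the $\hat\theta$- and $\lambda$-dependent cocycle factors and signs consistently through every gluing, so that the algebraic identities match the bordism relations.
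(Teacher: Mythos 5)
Your outline has the right ingredients (the crosscap state $\nu_{(\hat{\theta},\lambda)}$, the involution on the centre induced by an odd element, the twisted dual $P$ whose homotopy fixed points are Real representations), but it has a genuine structural gap at the point where you declare $\Brane=\Rep^{\theta}(\G)$ with $P$ acting non-trivially on objects to be the boundary data of the unoriented theory. In the algebraic characterization of unoriented open/closed TFTs (Theorem \ref{thm:genStruAlg}), the contravariant involution on the category of boundary conditions is forced to be a \emph{strict} duality which is the \emph{identity on objects}: topologically it is the value of the TFT on the mapping cylinder of interval reflection, which does not change the $D$-labels of the free boundaries. The functor $P^{(\hat{\theta},\lambda)}$ sends $V$ to a twisted dual and so is not admissible data; this is exactly the obstruction the paper resolves by the orientifold construction (Proposition \ref{prop:orientifoldTFT}): one replaces $\Rep^{\theta}(\G)$ by the homotopy-fixed-point category $\Rep^{\theta}(\G)^{\tilde{h}\Ctwo}$ of $(\hat{\theta},\lambda)$-twisted Real representations with \emph{all} $\G$-equivariant maps, on which the induced duality is strict and trivial on objects, and then checks the baggy unoriented Cardy condition \eqref{eq:LefInd}. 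Consequently your claim that the oriented sector being a subtheory of $\TFT_{(\G,\theta)}$ is ``automatic'' because the boundary category is all of $\Rep^{\theta}(\G)$ is not what happens: the oriented open sector is in general a \emph{proper} subtheory, since the forgetful functor $\Rep^{\theta}(\G)^{\tilde{h}\Ctwo}\rightarrow\Rep^{\theta}(\G)$ need not be essentially surjective. Indeed, keeping every $V\in\Rep^{\theta}(\G)$ as a boundary condition is inconsistent: for an irreducible $V$ with $P^{(\hat{\theta},\lambda)}(V)\not\simeq V$ the unoriented Cardy condition with $\phi=\id_V$ would equate $\langle\chi_V,\nu_{(\hat{\theta},\lambda)}\rangle_{\G}=0$ with the trace of an involution of $\Hom_{\G}(V,V)\simeq\C$, which is $\pm 1$.

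A secondary point: you expect the substantive relations (Klein/crosscap relation, unoriented Cardy, Möbius identity) to follow from orthogonality and the classification of twisted Real irreducibles. In the paper these are not deduced that way; they are direct computations with the twisted cocycles. The unoriented Cardy condition is Theorem \ref{thm:LefGroupAlg}, proved by the linear-algebra trace identity of Lemma \ref{lem:traceTensorDual} together with averaging over $\varsigma\in\Gh\setminus\G$; the Klein condition $\nu_{(\hat{\theta},\lambda)}^2=\sum_i p(a^i)a_i$ is a computation with $\theta$-regular conjugacy classes and the transgressed cocycles $\uptau(\theta)$, $\uptau^{\refl}_{\pi}(\hat{\theta})$; and one must also verify, beyond ``formal consequences,'' that $p^{\varsigma}$ is a graded (anti)automorphism and isometry (Lemma \ref{lem:genGAct}), that $\nu_{(\hat{\theta},\lambda)}$ is central, $p$-invariant and satisfies $p(\nu_{(\hat{\theta},\lambda)}f)=\nu_{(\hat{\theta},\lambda)}f$, and the compatibilities $P\circ\bb_V=\bb_{P(V)}\circ p$ and $p\circ\bb^V=\bb^{P(V)}\circ P$. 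So while your identification of the data is essentially correct, the proof as outlined would not go through without the homotopy-fixed-point (orientifold) step and without replacing the appeal to Frobenius--Schur orthogonality by the actual cocycle verifications.
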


A number of authors have studied $\TFT_{(\Gh,\hat{\theta},\lambda)}$ under the assumption that $\Gh = \G \times \Ctwo$ is the trivial extension, $\hat{\theta}$ is in the image of the map $H^2(B\G ;\Ctwo) \rightarrow H^{2}(B\Gh;U(1)_{\pi})$ and $\lambda$ is trivial \cite{karimipour1997,alexeevski2006,turaev2007,loktev2011,snyder2017}. For general $(\Gh, \hat{\theta})$ and trivial $\lambda$, a topological construction of the closed sector of $\TFT_{(\Gh,\hat{\theta},1)}$, and its higher dimensional analogues, was given in \cite{mbyoung2020} while a $\G$-equivariant extension of the closed sector of $\TFT_{(\Gh,\hat{\theta},1)}$ was given in \cite{kapustin2017}. We emphasize that for the applications of unoriented Dijkgraaf--Witten theory mentioned before Theorem \ref{thm:twistRealRepTFTIntro}, general input data $(\Gh,\hat{\theta},\lambda)$ is required; see Remark \ref{rem:TypeIITwists}. As explained below, general input data is also natural from the representation theoretic and $K$-theoretic perspectives.

Theorem \ref{thm:twistRealRepTFTIntro} is proved using an algebraic characterization of unoriented TFTs, Theorem \ref{thm:genStruAlg}, which builds off characterizations of oriented closed and open/closed TFTs \cite{dijkgraaf1989,abrams1996,lazaroiu2001,moore2006,alexeevski2006,lauda2008}, unoriented closed TFTs \cite{turaev2006} and unoriented open/closed TFTs with a single boundary condition \cite{alexeevski2006}. The algebraic data required to define an unoriented open/closed TFT includes:
\begin{itemize}
\item A commutative Frobenius algebra $A$; this defines the oriented closed sector.
\item A Calabi--Yau category $\Brane$; this defines the oriented open sector.
\item An isometric involution $p: A \rightarrow A$ and a \emph{crosscap state} $Q \in A$, the latter corresponding to the value of the TFT on the compact M\"{o}bius strip; this defines the unoriented closed sector.
\item A strict contravariant involution of $\Brane$, that is, a functor $P: \Brane^{\op} \rightarrow \Brane$ which squares to the identity, which is moreover required to be the identity on objects; this defines the unoriented open sector.
\end{itemize}
The data (and that which we have omitted here) is required to satisfy a number of coherence conditions. The oriented theory $\TFT_{(\G,\theta)}$ is defined by the commutative Frobenius algebra $HH_{\bullet}(\Rep^{\theta}(\G)) \simeq Z(\C^{\theta^{-1}}[\G])$ with the Haar bilinear form $\langle-, -\rangle_{\G}$ and Calabi--Yau category $\Rep^{\theta}(\G)$. Motivated by the search for the data required to define the unoriented lift $\TFT_{(\Gh,\hat{\theta},\lambda)}$, in Section \ref{sec:FSInd} we construct and study a contravariant involution $(P^{(\Gh,\hat{\theta},\lambda)},\Theta^{(\Gh,\hat{\theta},\lambda)})$ of $\Rep^{\theta}(\G)$. The functor $P^{(\Gh,\hat{\theta},\lambda)}$ acts non-trivially on objects and so is not an admissible choice for the defining data of $\TFT_{(\Gh,\hat{\theta},\lambda)}$. A key representation theoretic observation is that the homotopy fixed points of $(P^{(\Gh,\hat{\theta},\lambda)},\Theta^{(\Gh,\hat{\theta},\lambda)})$ is the category of $(\Gh,\hat{\theta},\lambda)$-twisted Real representations of $\G$. The Real representation theory of $\G$ was originally studied by Wigner \cite{wigner1959} and Dyson \cite{dyson1962} as a generalization of real and quaternionic representation theory in the context of anti-unitary symmetries in quantum mechanics. More recently, Real representation theory has been developed from the related perspective of twisted equivariant $KR$-theory \cite{atiyah1969,karoubi1970,freed2013b,noohiYoung2022} and categorical representation theory \cite{mbyoung2021b,rumyninYoung2021,rumynin2021b}. In the $K$-theoretic setting, general pairs $(\hat{\theta}, \lambda)$ are required to realize all $KR$-theory twists. Motivated by the above perspectives, we consider the element
\[
\nu_{(\Gh,\hat{\theta},\lambda)}
=
\sum_{\varsigma \in \Gh \setminus \G} \frac{\lambda(\varsigma)}{\hat{\theta}([\varsigma \vert \varsigma])} l_{\varsigma^2} \in HH_{\bullet}(\Rep^{\theta}(\G)).
\]
The role of $\nu_{(\Gh,\hat{\theta},\lambda)}$ in Real representation theory is summarized by the next result.

\begin{thmIntro}[{Theorem \ref{thm:LefGroupAlg} and Corollary \ref{cor:FSInd}}]
\label{thm:LefGroupAlgIntro}
Let $V$ be a $\theta$-twisted representation of $\G$ with character $\chi_V$. Then $\langle \chi_V, \nu_{(\Gh,\hat{\theta},\lambda)} \rangle_{\G}$ is equal to the trace of the involution
\[
\Hom_{\G}(V,P^{(\Gh,\hat{\theta},\lambda)}(V)) \rightarrow \Hom_{\G}(V,P^{(\Gh,\hat{\theta},\lambda)}(V)).
\qquad
f \mapsto P^{(\Gh,\hat{\theta},\lambda)}(f) \circ \Theta^{(\Gh,\hat{\theta},\lambda)}_V.
\]
In particular, if $V$ is irreducible, then
\[
\langle \chi_V, \nu_{(\Gh,\hat{\theta},\lambda)} \rangle_{\G}
=
\begin{cases}
1 & \mbox{if and only if $V$ lifts to a $(\Gh,\hat{\theta},\lambda)$-twisted Real representation},\\
-1 & \mbox{if and only if $V$ lifts to a $(\Gh,\delta\hat{\theta},\lambda)$-twisted Real representation}, \\
0 & \mbox{otherwise},
\end{cases}
\]
where $\delta$ is a representative of the generator of $H^{2}(B \Ctwo;U(1)_{\pi}) \simeq \Ctwo$.
\end{thmIntro}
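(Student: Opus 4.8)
The plan is to prove the first assertion by an explicit computation of the trace of the involution $f \mapsto P^{(\Gh,\hat\theta,\lambda)}(f) \circ \Theta^{(\Gh,\hat\theta,\lambda)}_V$ on $\Hom_{\G}(V, P^{(\Gh,\hat\theta,\lambda)}(V))$, and then to deduce the irreducible case and the Real-representation dichotomy from general Frobenius--Schur type reasoning. First I would recall from Section~\ref{sec:FSInd} the explicit formula for the functor $P^{(\Gh,\hat\theta,\lambda)}$ on objects and the natural isomorphism $\Theta^{(\Gh,\hat\theta,\lambda)}$; writing $P(V)$ as the $\theta$-twisted representation on the conjugate-dual space $\bar V^{\ast}$ with $\G$-action twisted through a chosen section $\varsigma_0$ of $\pi$ and rescaled by $\lambda$ and values of $\hat\theta$, the space $\Hom_{\G}(V, P(V))$ is identified with a space of ($\varsigma_0$-twisted) bilinear forms on $V$, on which the stated map becomes the transpose-type involution whose $\pm 1$ eigenspaces are the symmetric and antisymmetric (with respect to the appropriate sign) forms. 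Taking the trace of this involution amounts to computing $\dim(\text{symmetric}) - \dim(\text{antisymmetric})$.

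The key computational step is to express this trace as a character integral. I would realize $\Hom_{\G}(V, P(V))$ as the $\G$-invariants of a twisted tensor-square type representation and use the orthogonality/Haar formula so that the trace of the involution becomes
\[
\frac{1}{|\G|}\sum_{g \in \G} \frac{\lambda(\varsigma_0 g)}{\hat\theta([\varsigma_0 g \mid \varsigma_0 g])}\,\chi_V\!\big((\varsigma_0 g)^2\big),
\]
after keeping careful track of the cocycle $\hat\theta$ and the sign conventions coming from the contravariant involution. Reindexing the sum by $\varsigma = \varsigma_0 g$, which ranges over the nontrivial coset $\Gh \setminus \G$ as $g$ ranges over $\G$, and recognizing that $l_{\varsigma^2}$ pairs with $\chi_V$ under $\langle -,-\rangle_{\G}$ precisely to produce $\chi_V(\varsigma^2)$ up to the Haar normalization, I identify this with $\langle \chi_V, \nu_{(\Gh,\hat\theta,\lambda)}\rangle_{\G}$. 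This establishes the first claim.

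For the second part, suppose $V$ is irreducible. Then $\End_{\G}(V) \cong \C$, and since $P(V)$ is again irreducible of the same isomorphism type as $V$ (because $\pi(\varsigma_0)=-1$ acts on the relevant set of irreducibles and one checks $V \cong P(V)$ exactly in the Real-liftable case; otherwise $\Hom_{\G}(V,P(V))=0$ and the pairing is $0$), the space $\Hom_{\G}(V, P(V))$ is at most one-dimensional. When it is one-dimensional the squared involution $(P(f)\circ\Theta_V)$ composed with itself is multiplication by a scalar which, by the coherence identity $P^2 = \id$ together with the twisted $2$-cocycle condition on $\hat\theta$, equals $+1$ or $-1$ according to whether the defining cocycle of the would-be Real structure is $\hat\theta$ or its twist by the generator $\delta$ of $H^2(B\Ctwo; U(1)_\pi)$. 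An involution on a one-dimensional space has trace $+1$; a map squaring to $-1$ on a one-dimensional space has trace $0$, but after correcting the generator by $\delta$ it becomes an involution of the $\delta\hat\theta$-twisted Hom-space with trace computed against $\nu_{(\Gh,\delta\hat\theta,\lambda)}$, giving the value $-1$ in the original normalization. Matching these three cases---$\Hom=0$, or $\Hom\cong\C$ with square $+1$, or square $-1$---with liftability to a $(\Gh,\hat\theta,\lambda)$- respectively $(\Gh,\delta\hat\theta,\lambda)$-twisted Real representation yields Corollary~\ref{cor:FSInd}.

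The main obstacle I anticipate is bookkeeping: correctly propagating the $\pi$-twisted $2$-cocycle $\hat\theta$, the character $\lambda$, and the contravariant-involution sign through the identification $\Hom_{\G}(V,P(V)) \cong \{\text{twisted bilinear forms}\}$ and through the reindexing $\varsigma = \varsigma_0 g$, so that the scalar appearing in the square of the involution is \emph{exactly} the class distinguishing $\hat\theta$ from $\delta\hat\theta$ rather than off by some coboundary. This is precisely where the hypothesis that $\hat\theta$ restricts to $\theta$ on $B\G$ and the normalization of $\nu_{(\Gh,\hat\theta,\lambda)}$ are used, and it is the step that must be done with full care even though each individual manipulation is routine.
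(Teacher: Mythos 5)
Your first paragraph follows essentially the paper's route for Theorem \ref{thm:LefGroupAlg}: identify the trace of $f \mapsto P(f)\circ\Theta_V$ on $\Hom_{\G}(V,P(V))$ with a weighted character sum over the odd coset (the paper does this via the elementary trace identity of Lemma \ref{lem:traceTensorDual} together with averaging over $\varsigma\in\Gh\setminus\G$), and this is fine modulo the cocycle bookkeeping you flag but do not carry out; note only that the correct summand is $\frac{\lambda(\varsigma)}{\hat{\theta}([\varsigma\vert\varsigma])}\tr_V\,\rho_V(\varsigma^2)^{-1}=\frac{\lambda(\varsigma)}{\hat{\theta}([\varsigma\vert\varsigma])}\theta([\varsigma^{2}\vert\varsigma^{-2}])^{-1}\chi_V(\varsigma^{-2})$, not $\chi_V(\varsigma^2)$ on the nose, which is exactly the normalization the pairing $\langle-,-\rangle_{\G}$ produces.

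The genuine gap is in your treatment of the irreducible case. The coherence condition \eqref{eq:catWDualCoher} forces the map $\iota\colon f\mapsto P(f)\circ\Theta_V$ to square to the identity on $\Hom_{\G}(V,P(V))$ for \emph{every} choice of $\hat{\theta}$; its square is never $-1$, so the sign you are after cannot be detected by computing $\iota\circ\iota$, which is where your argument places it. Moreover the two linear-algebra assertions you use are false over $\C$: an involution of a line can be $-\id$ (trace $-1$), and a map squaring to $-\id$ on a line has trace $\pm i$, not $0$; the value $0$ in the trichotomy comes solely from $\Hom_{\G}(V,P(V))=0$, which you do mention but then do not use. The correct mechanism (and the paper's) is: when $\Hom_{\G}(V,P(V))\simeq\C$, the involution $\iota$ acts on this line by a sign $\epsilon=\pm1$, which by the first part equals $\langle\chi_V,\nu_{(\hat{\theta},\lambda)}\rangle_{\G}$; a nonzero $\psi_V$ then satisfies $P(\psi_V)\circ\Theta_V=\epsilon\,\psi_V$, and $\epsilon=+1$ is literally the homotopy fixed point condition defining a $(\hat{\theta},\lambda)$-twisted Real structure, while $\epsilon=-1$ is the homotopy fixed point condition for $(\delta\hat{\theta},\lambda)$, because $\delta([\varsigma\vert\varsigma])=-1$ for odd $\varsigma$ leaves $P$ unchanged and replaces $\Theta_V$ by $-\Theta_V$ (equivalently $\nu_{(\delta\hat{\theta},\lambda)}=-\nu_{(\hat{\theta},\lambda)}$). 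Without this step your case matching with liftability to $(\hat{\theta},\lambda)$- versus $(\delta\hat{\theta},\lambda)$-twisted Real representations is not established.
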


The element $\nu_{(\Gh,\hat{\theta},\lambda)}$ recovers under various specializations of the data $(\Gh,\hat{\theta},\lambda)$ other generalized Frobenius--Schur elements \cite{frobenius1906,gow1979,turaev2007,ichikawa2023}. In particular, the second statement in Theorem \ref{thm:LefGroupAlgIntro} shows that $\nu_{(\Gh,\hat{\theta},\lambda)}$ is a generalization to twisted Real representation theory of the classical Frobenius--Schur element. Theorem \ref{thm:LefGroupAlgIntro} and a complete understanding of the $\theta$-twisted representation theory of $\G$ suffices to understand the $(\Gh,\hat{\theta},\lambda)$-twisted Real representation theory of $\G$.

Returning to the proof of Theorem \ref{thm:twistRealRepTFTIntro}, we take for $\Brane$ the Calabi--Yau category of $(\Gh,\hat{\theta},\lambda)$-twisted Real representations of $\G$ and their $\G$-equivariant linear maps. We view this as an orientifold-type construction, with $\Rep^{\theta}(\G)$ seen as the category of $D$-branes in an oriented string theory and $\Brane$ the category of $D$-branes which survive the orientifold projection defined by $(P^{(\Gh,\hat{\theta},\lambda)},\Theta^{(\Gh,\hat{\theta},\lambda)})$. The category twisted Real representations is a non-full subcategory of $\Brane$ and the forgetful functor $\Brane \rightarrow \Rep^{\theta}(\G)$ respects Calabi--Yau structures. Moreover, $\Brane$ inherits a contravariant involution which is the identity on objects and $A= HH_{\bullet}(\Rep^{\theta}(\G))$ inherits an isometric involution $p$. We take for the crosscap state $Q$ the generalized Frobenius--Schur element $\nu_{(\Gh,\hat{\theta},\lambda)}$. It remains to verify the coherence conditions. A mild generalization of the first equality in Theorem \ref{thm:LefGroupAlgIntro} (proved in Theorem \ref{thm:LefGroupAlg}) is the unoriented counterpart of the famous Cardy condition, asserting the equality of two ways of evaluating a M\"{o}bius strip diagram with boundary condition $V$. The remaining coherence conditions required of the crosscap state, involution $p$ and boundary-bulk and bulk-boundary maps are verified using the calculus of twisted cocycles. In Section \ref{sec:partFun}, we compute partition functions of $\TFT_{(\Gh,\hat{\theta},\lambda)}$.

\subsection*{Acknowledgements}
The work of M. B. Y. was supported by National Science Foundation grant DMS-2302363 and a Simons Foundation Collaboration Grant for Mathematicians (Award ID 853541).

\section{Background material}
\label{sec:background}

Throughout the paper the ground field is $\C$ and vector spaces are finite dimensional. Linear duality is $(-)^{\vee} = \Hom_{\C}(-,\C)$. Denote by $U(1)$ the group of unit norm complex numbers and $C_n$ the cyclic group of order $n$, seen as a multiplicative group.

\subsection{Group cohomology}

Let $K$ be a finite group and $M$ a left $K$-module. We regard the underlying abelian group of $M$ as multiplicative. Let $C^{\bullet}(BK; M)$ be the complex of normalized simplicial cochains on $BK$ with coefficients in $M$. An element $\theta \in C^n(BK; M)$ is a function
\[
\theta: K^n \rightarrow M,
\qquad
(k_n, \dots, k_1) \mapsto \theta([k_n \vert \cdots \vert k_1])
\]
whose value is the identity if any $k_i$ is the identity. The differential $d \theta$ of an $(n-1)$-cochain $\theta$ is defined so that $d \theta ([k_n \vert \cdots \vert k_1])$ is equal to
\[
k_n \cdot \theta([k_{n-1} \vert \cdots \vert k_1])
\prod_{j=1}^{n-1} \theta([k_n \vert \cdots \vert k_{j+1} k_j\vert \cdots \vert k_1])^{(-1)^{n-j}}
\times \theta([k_n \vert \cdots \vert k_2])^{(-1)^n}.
\]
Write $Z^{\bullet}(BK;M)$ and $H^{\bullet}(BK;M)$ for the cocycles and cohomologies of $C^{\bullet}(BK; M)$.

When $M = U(1)$ with trivial $K$-action, write $C^{\bullet}(BK)$ for $C^{\bullet}(BK;M)$. When $\pi: \Gh \rightarrow \Ctwo$ is a group homomorphism and $M=U(1)$ with $\Gh$-action $\omega \cdot z = z^{\pi(\omega)}$, write $C^{\bullet+\pi}(B\Gh)$ for $C^{\bullet}(B\Gh;M)$. If $\pi: \Ctwo \rightarrow \Ctwo$ is the identity map, then $H^{2+\pi}(B \Ctwo) \simeq \Ctwo$; a cocycle representative $\delta$ for the generator is given by
\[
\delta ([\varsigma_2 \vert \varsigma_1])
=
\begin{cases}
-1 & \mbox{if } \varsigma_1=\varsigma_2=-1,\\
0 & \mbox{otherwise}.
\end{cases}
\]
We use the same notation for $\delta$ and its image under $\pi^*: Z^{2+\pi}(B \Ctwo) \rightarrow Z^{2+\pi}(B \Gh)$.


\begin{Lem}
Let $\pi: \Gh \rightarrow \Ctwo$ be a $\Ctwo$-graded finite group and $\hat{\theta} \in Z^{2+\pi}(B \Gh)$. For all $g_i \in \G$, $\omega \in \Gh$ and $\varsigma \in \Gh \setminus \G$, the following equalities hold:
\begin{equation}
\label{eq:2cocycleKey}
\frac{\hat{\theta}([\omega g_2 \omega^{-1} \vert \omega g_1 \omega^{-1}])}{\hat{\theta}([g_2 \vert g_1])^{\pi(\omega)}}
=
\frac{\hat{\theta}([\omega g_2 \omega^{-1} \vert \omega])}{\hat{\theta}([\omega \vert g_2])} \frac{\hat{\theta}([\omega g_1 \omega^{-1} \vert \omega])}{\hat{\theta}([\omega \vert g_1])}
\left( \frac{\hat{\theta}([\omega g_2 g_1 \omega^{-1} \vert \omega])}{\hat{\theta}([\omega \vert g_2 g_1])} \right)^{-1}
\end{equation}
\begin{equation}
\label{eq:oddConj}
\frac{\hat{\theta}([\omega \varsigma \omega^{-1} \vert \omega \varsigma \omega^{-1}])}{\hat{\theta}([\varsigma \vert \varsigma])^{-\pi(\omega)}}
=
\frac{\hat{\theta}([\omega \vert \varsigma^2])}{\hat{\theta}([\omega \varsigma^2 \omega^{-1} \vert \omega])}.
\end{equation}
\end{Lem}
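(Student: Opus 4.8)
The plan is to derive both identities directly from the single fact that $\hat{\theta}$ is a $2$-cocycle. Unwinding the $2$-cocycle condition through the displayed formula for the differential, applied with $n = 3$, yields for all $\omega_1, \omega_2, \omega_3 \in \Gh$ the master relation
\begin{equation*}
\hat{\theta}([\omega_2 \vert \omega_1])^{\pi(\omega_3)}\, \hat{\theta}([\omega_3 \vert \omega_2 \omega_1])
=
\hat{\theta}([\omega_3 \omega_2 \vert \omega_1])\, \hat{\theta}([\omega_3 \vert \omega_2]).
\tag{$\star$}
\end{equation*}
Both equalities will follow by specializing $(\star)$ to a handful of well-chosen triples $(\omega_3, \omega_2, \omega_1)$, rewriting arguments with the elementary relations $(\omega x \omega^{-1})\omega = \omega x$ and $(\omega x \omega^{-1})(\omega y \omega^{-1}) = \omega x y \omega^{-1}$, and solving the resulting equations in the abelian group $U(1)$. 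The short exact sequence intervenes only through $\pi\vert_{\G} = 1$ and $\pi(\varsigma) = -1$, together with the fact that conjugation preserves the $\Ctwo$-grading, so that also $\pi(\omega \varsigma \omega^{-1}) = -1$.

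For \eqref{eq:2cocycleKey} I would apply $(\star)$ to the three triples $(\omega, g_2, g_1)$, $(\omega g_2 \omega^{-1}, \omega, g_1)$ and $(\omega g_2 \omega^{-1}, \omega g_1 \omega^{-1}, \omega)$. Since $g_1$, $g_2$ and all their conjugates lie in $\G = \ker \pi$, the only non-trivial $\pi$-exponent occurring is the $\pi(\omega)$ on $\hat{\theta}([g_2 \vert g_1])$ in the first instance. One then uses the first relation to solve for $\hat{\theta}([\omega g_2 \vert g_1])$, substitutes into the second to solve for $\hat{\theta}([\omega g_2 \omega^{-1} \vert \omega g_1])$, and substitutes that into the third; the intermediate quantities $\hat{\theta}([\omega g_2 \vert g_1])$ and $\hat{\theta}([\omega g_2 \omega^{-1} \vert \omega g_1])$ cancel, and what remains, after dividing by $\hat{\theta}([g_2 \vert g_1])^{\pi(\omega)}$, is precisely \eqref{eq:2cocycleKey}. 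Conceptually this is the cochain-level expression of the triviality of inner automorphisms on $H^{2+\pi}(B\Gh)$: the right-hand side of \eqref{eq:2cocycleKey} is the coboundary $d\mu$ of the $1$-cochain $\mu(g) = \hat{\theta}([\omega g \omega^{-1} \vert \omega]) / \hat{\theta}([\omega \vert g])$ on $\G$, which exhibits $x \mapsto \omega x \omega^{-1}$ and the identity as cohomologous endomorphisms.

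For \eqref{eq:oddConj} I would set $c = \omega \varsigma \omega^{-1}$, note $\pi(c) = -1$, $c \omega = \omega \varsigma$ and $c^2 = \omega \varsigma^2 \omega^{-1}$, and apply $(\star)$ to the three triples $(\omega, \varsigma, \varsigma)$, $(c, \omega, \varsigma)$ and $(c, c, \omega)$. The first gives $\hat{\theta}([\varsigma \vert \varsigma])^{\pi(\omega)}\hat{\theta}([\omega \vert \varsigma^2]) = \hat{\theta}([\omega \varsigma \vert \varsigma])\hat{\theta}([\omega \vert \varsigma])$, which is where the power of $\hat{\theta}([\varsigma \vert \varsigma])$ in \eqref{eq:oddConj} comes from. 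The second and third both involve $\hat{\theta}([c \vert \omega \varsigma])$; eliminating it between them also eliminates $\hat{\theta}([c \vert \omega])$, leaving $\hat{\theta}([\omega \vert \varsigma])\hat{\theta}([\omega \varsigma \vert \varsigma]) = \hat{\theta}([\omega \varsigma^2 \omega^{-1} \vert \omega])\hat{\theta}([c \vert c])$. Substituting the first relation for the left-hand side and solving for $\hat{\theta}([c \vert c]) = \hat{\theta}([\omega \varsigma \omega^{-1} \vert \omega \varsigma \omega^{-1}])$ then gives \eqref{eq:oddConj}. I expect the only real obstacle in either case to be the bookkeeping --- arranging the triples so that each auxiliary $\hat{\theta}$-value occurs in exactly two of the relations and hence cancels, and tracking when the $\pi$-twist contributes an inversion; this last point is what separates \eqref{eq:oddConj}, where the odd element $\varsigma$ forces $\pi(c) = -1$ in two of the three applications of $(\star)$, from \eqref{eq:2cocycleKey}.
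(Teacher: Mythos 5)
Your computations are correct and follow exactly the route the paper leaves implicit (``repeated use of the $2$-cocycle condition''): for \eqref{eq:2cocycleKey} your three instances of the cocycle relation, applied to $(\omega,g_2,g_1)$, $(\omega g_2\omega^{-1},\omega,g_1)$ and $(\omega g_2\omega^{-1},\omega g_1\omega^{-1},\omega)$, do cancel the auxiliary terms $\hat{\theta}([\omega g_2\vert g_1])$ and $\hat{\theta}([\omega g_2\omega^{-1}\vert\omega g_1])$ and give the stated identity, with the single twist $\pi(\omega)$ entering only through the first instance, as you say.

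One point deserves flagging in your treatment of \eqref{eq:oddConj}. Carrying out your elimination honestly, the three triples $(\omega,\varsigma,\varsigma)$, $(c,\omega,\varsigma)$, $(c,c,\omega)$ with $c=\omega\varsigma\omega^{-1}$ yield
\[
\hat{\theta}([\varsigma\vert\varsigma])^{\pi(\omega)}\,\hat{\theta}([\omega\vert\varsigma^2])
=
\hat{\theta}([\omega\varsigma^2\omega^{-1}\vert\omega])\,\hat{\theta}([c\vert c]),
\]
that is, the identity with $\hat{\theta}([\varsigma\vert\varsigma])^{\pi(\omega)}$ in the denominator of the left-hand side, \emph{not} the printed exponent $-\pi(\omega)$; your claim that solving for $\hat{\theta}([c\vert c])$ ``gives \eqref{eq:oddConj}'' silently passes over a discrepancy of $\hat{\theta}([\varsigma\vert\varsigma])^{2\pi(\omega)}$. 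In fact your version is the correct one and the printed exponent is a typo: setting $\omega=e$ in \eqref{eq:oddConj} as printed would force $\hat{\theta}([\varsigma\vert\varsigma])^{2}=1$ for every twisted cocycle (false already for coboundaries), and the identities actually invoked later in the paper --- in the proof of Lemma \ref{lem:FSClassFun} and in the computation of $p^{\varsigma}(\nu_{(\hat{\theta},\lambda)})$ in Proposition \ref{prop:crosscapInv} --- are exactly your $+\pi(\omega)$ form. So your proof establishes the intended statement; the only criticism is that you should have recorded the mismatch with the printed formula rather than asserting literal agreement.
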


\begin{proof}
Both equalities follow from repeated use of the $2$-cocycle condition on $\hat{\theta}$.
\end{proof}

\subsection{Twisted representation theory}
\label{sec:twistRepThy}
We recall background on twisted representation theory following \cite{karpilovsky1985}. Let $\G$ be a finite group and $\theta \in Z^2(B\G)$.

\begin{Def} A \emph{$\theta$-twisted} (or \emph{$\theta$-projective}) \emph{representation of $\G$} is pair $(V,\rho)$ consisting of a vector space $V$ and a map $\rho: \G \rightarrow GL(V)$ which satisfies $\rho(e)=\id_{V}$ and
\[
\rho(g_2) \circ \rho(g_1) = \theta([g_2 \vert g_1]) \rho(g_2 g_1),
\qquad
g_1, g_2 \in G.
\]
\end{Def}

We often write $V$ or $\rho_V$ for $(V,\rho)$. The category $\Rep^{\theta}(\G)$ of $\theta$-twisted representations and their $\G$-equivariant linear maps is $\C$-linear finite semisimple.

The $\theta$-twisted group algebra $\C^{\theta}[G]$ is the $\C$-algebra with basis $\{l_g \mid g \in \G\}$ and multiplication $l_{g_2} \cdot l_{g_1} = \theta([g_2 \vert g_1]) l_{g_2 g_1}$. The category of finite dimensional $\C^{\theta}[G]$-modules is equivalent to $\Rep^{\theta}(\G)$. We sometimes interpret $\C^{\theta}[G]$ as functions on $\G$, in which case $l_g$ the $\delta$-function at $g$.
The centre $Z(\C^{\theta}[\G])$ consists of elements $\sum_{g \in \G} a_g l_g$ whose coefficients satisfy
\[
a_{hgh^{-1}}
=
\uptau(\theta)([h]g)^{-1} a_g,
\qquad
g,h \in \G.
\]
Here $\uptau(\theta)([h]g) = \frac{\theta([h g h^{-1} \vert h])}{\theta([h \vert g])}$ are the components of a $1$-cocycle $\uptau(\theta)$ on the loop groupoid of $B \G$ called the \emph{loop transgression} of $\theta$ \cite[Theorem 3]{willerton2008}. Define a non-degenerate symmetric bilinear form on $\C^{\theta}[\G]$ by
\[
\langle \sum_{g \in \G} a_g l_g, \sum_{h \in \G} b_h l_h \rangle_{\G,\theta} = \frac{1}{\vert G \vert} \sum_{g \in \G} \theta([g^{-1} \vert g]) a_{g^{-1}} b_g.
\]

The character of $(V,\rho) \in \Rep^{\theta}(\G)$ is the function $\chi_V: \G \rightarrow \C$, $g \mapsto \tr_V\, \rho(g)$. A short calculation shows that $\chi_V(hgh^{-1}) = \uptau(\theta)([h]g) \chi_V(g)$. Functions $\G \rightarrow \C$ with this conjugation equivariance are elements of $Z(\C^{\theta^{-1}}[\G])$ and are called $\theta$-twisted class functions. Characters of irreducible $\theta$-twisted representations form an orthonormal basis of $Z(\C^{\theta^{-1}}[\G])$ with respect to $\langle-,-\rangle_{\G}:=\langle -, -\rangle_{\G,\theta^{-1}}$.

Given $(V,\rho_V) \in \Rep^{\theta}(\G)$, define $(V^{\vee}, \rho_{V^{\vee}}) \in \Rep^{\theta^{-1}}(\G)$ by $\rho_{V^{\vee}}(g) = (\rho_V(g)^{-1})^{\vee}$. For ease of notation, we write $\rho_V(g)^{- \vee}$ for $(\rho_V(g)^{-1})^{\vee}$.

\subsection{Categories with duality}

\begin{Def}
\label{def:catWDual}
\begin{enumerate}
\item A \emph{category with duality} is a triple $(\cat,P,\Theta)$ consisting of a category $\cat$, a functor $P: \cat^{\op} \rightarrow \cat$ and a natural isomorphism $\Theta: \id_{\cat} \Rightarrow P \circ P^{\op}$ whose components satisfy
\begin{equation}
\label{eq:catWDualCoher}
P(\Theta_V) \circ \Theta_{P(V)} = \id_{P(V)},
\qquad
V \in \cat.
\end{equation}
The duality structure $(P,\Theta)$ is \emph{strict} if $\Theta$ is the identity natural transformation.

\item A \emph{homotopy fixed point} of $(\cat,P,\Theta)$ is a pair $(V,\psi_V)$ consisting of an object $V \in \cat$ and an isomorphism $\psi_V: V \rightarrow P(V)$ which satisfies $P(\psi_V) \circ \Theta_V = \psi_V$.
\end{enumerate}
\end{Def}


We interpret $(P,\Theta)$ as defining a categorical $\Ctwo$-action on $\cat$ in which the generator acts contravariantly. Motivated by this, let $\cat^{h\Ctwo}$, $\cat^{\tilde{h}\Ctwo}$ be the categories with objects homotopy fixed points and morphisms
\[
\Hom_{\cat^{h\Ctwo}}((V,\psi_V),(W,\psi_W)) = \{\phi \in \Hom_{\cat}(V,W) \mid \psi_V = P(\phi) \circ \psi_W \circ \phi \},
\]
\[
\Hom_{\cat^{\tilde{h}\Ctwo}}((V,\psi_V),(W,\psi_W)) = \Hom_{\cat}(V,W).
\]
Let $P^{\tilde{h}\Ctwo}: (\cat^{\tilde{h}\Ctwo})^{\op} \rightarrow \cat^{\tilde{h}\Ctwo}$ be the identity on objects and send a morphism $\phi: (V,\psi_V) \rightarrow (W,\psi_W)$ to $P^{\tilde{h}\Ctwo}(\phi) = \psi_V^{-1} \circ P(\phi) \circ \psi_W$. Let $\Theta^{\tilde{h}\Ctwo}: \id_{\cat^{\tilde{h}\Ctwo}} \Rightarrow P^{\tilde{h}\Ctwo} \circ (P^{\tilde{h}\Ctwo})^{\op}$ be the identity natural transformation.

\begin{Lem}
\label{lem:homoFixDual}
The triple $(\cat^{\tilde{h}\Ctwo},P^{\tilde{h}\Ctwo},\Theta^{\tilde{h}\Ctwo})$ is a category with strict duality. Moreover, $P^{\tilde{h}\Ctwo}$ is the identity on objects.
\end{Lem}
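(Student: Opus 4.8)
The plan is to verify the three required properties directly from the definitions of $P^{\tilde{h}\Ctwo}$ and $\Theta^{\tilde{h}\Ctwo}$: that $P^{\tilde{h}\Ctwo}$ is a well-defined functor $(\cat^{\tilde{h}\Ctwo})^{\op} \to \cat^{\tilde{h}\Ctwo}$, that $\Theta^{\tilde{h}\Ctwo} = \id$ is a natural isomorphism $\id \Rightarrow P^{\tilde{h}\Ctwo} \circ (P^{\tilde{h}\Ctwo})^{\op}$, and that the coherence \eqref{eq:catWDualCoher} holds; the final sentence (identity on objects) is immediate from the construction. Since $\cat^{\tilde{h}\Ctwo}$ has the same hom-sets as $\cat$, functoriality of $P^{\tilde{h}\Ctwo}$ on morphisms is the first thing to pin down.

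First I would check $P^{\tilde{h}\Ctwo}$ respects identities and composition. For a homotopy fixed point $(V,\psi_V)$, $P^{\tilde{h}\Ctwo}(\id_V) = \psi_V^{-1} \circ P(\id_V) \circ \psi_V = \psi_V^{-1}\circ\id_{P(V)}\circ\psi_V = \id_V$. For composable morphisms $\phi: (V,\psi_V)\to(W,\psi_W)$ and $\eta:(W,\psi_W)\to(X,\psi_X)$ in $\cat^{\tilde{h}\Ctwo}$, one computes $P^{\tilde{h}\Ctwo}(\phi) \circ P^{\tilde{h}\Ctwo}(\eta) = (\psi_V^{-1}\circ P(\phi)\circ\psi_W)\circ(\psi_W^{-1}\circ P(\eta)\circ\psi_X) = \psi_V^{-1}\circ P(\phi)\circ P(\eta)\circ\psi_X = \psi_V^{-1}\circ P(\eta\circ\phi)\circ\psi_X = P^{\tilde{h}\Ctwo}(\eta\circ\phi)$, using functoriality of $P$ (which reverses arrows), and this is exactly the contravariance required of $P^{\tilde{h}\Ctwo}$. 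It also needs checking that $P^{\tilde{h}\Ctwo}(\phi)$ genuinely lies in $\Hom_{\cat^{\tilde{h}\Ctwo}}$, but since morphisms of $\cat^{\tilde{h}\Ctwo}$ are unrestricted morphisms of $\cat$, there is nothing to verify.

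Next, since $\Theta^{\tilde{h}\Ctwo}$ is declared to be the identity, naturality amounts to the commutativity of the square expressing $\id \Rightarrow P^{\tilde{h}\Ctwo}\circ(P^{\tilde{h}\Ctwo})^{\op}$, i.e.\ for $\phi:(V,\psi_V)\to(W,\psi_W)$ one needs $P^{\tilde{h}\Ctwo}\big((P^{\tilde{h}\Ctwo})^{\op}(\phi)\big) = \phi$ after identifying source and target via the identity components. Computing: $(P^{\tilde{h}\Ctwo})^{\op}(\phi)$ is $P^{\tilde{h}\Ctwo}(\phi) = \psi_V^{-1}\circ P(\phi)\circ\psi_W$ viewed as a morphism $(P(W),\psi_{P(W)}')\to(P(V),\psi_{P(V)}')$ where I would first need to record what homotopy-fixed-point structure $P^{\tilde{h}\Ctwo}$ puts on $P(V)$ — namely $\psi_V^{-1}$ composed appropriately, using the defining relation $P(\psi_V)\circ\Theta_V = \psi_V$ of a homotopy fixed point. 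Applying $P^{\tilde{h}\Ctwo}$ once more and simplifying with functoriality of $P$ and the coherence \eqref{eq:catWDualCoher} for $(\cat,P,\Theta)$ should return $\phi$; this is the one computation that genuinely uses the hypothesis that $(\cat,P,\Theta)$ satisfies \eqref{eq:catWDualCoher}. Finally, the strict coherence \eqref{eq:catWDualCoher} for $(\cat^{\tilde{h}\Ctwo},P^{\tilde{h}\Ctwo},\Theta^{\tilde{h}\Ctwo})$ reads $P^{\tilde{h}\Ctwo}(\id) \circ \id = \id$, which is trivially true once $\Theta^{\tilde{h}\Ctwo} = \id$ and $P^{\tilde{h}\Ctwo}$ preserves identities.

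The main obstacle is purely bookkeeping: getting the homotopy-fixed-point structure that $P^{\tilde{h}\Ctwo}$ assigns to $P(V)$ correct, and then tracking the several nested applications of $P$ (with its arrow reversal) together with the relations $P(\psi_V)\circ\Theta_V = \psi_V$ and $P(\Theta_V)\circ\Theta_{P(V)} = \id_{P(V)}$ without sign or direction errors. No step requires a new idea — it is a matter of carefully unwinding the definitions — so I would present the identity and composition checks briefly and devote the bulk of the proof to the naturality/involutivity verification.
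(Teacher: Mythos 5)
Your verification of functoriality (preservation of identities and reversal of composition) is correct and complete, and you are right that there is no hom-set membership condition to check since $\Hom_{\cat^{\tilde{h}\Ctwo}}$ consists of all morphisms of $\cat$. The gap is in the one step you defer, which is also the only substantive content of the lemma: showing that $P^{\tilde{h}\Ctwo}\circ(P^{\tilde{h}\Ctwo})^{\op}$ is the identity functor, so that the identity transformation $\Theta^{\tilde{h}\Ctwo}$ really is a natural isomorphism $\id\Rightarrow P^{\tilde{h}\Ctwo}\circ(P^{\tilde{h}\Ctwo})^{\op}$. Your sketch of that step starts from a misreading of the definition: $P^{\tilde{h}\Ctwo}$ is the identity on objects, so $(P^{\tilde{h}\Ctwo})^{\op}(\phi)=\psi_V^{-1}\circ P(\phi)\circ\psi_W$ is already a morphism $(W,\psi_W)\to(V,\psi_V)$ of $\cat^{\tilde{h}\Ctwo}$; there is no ``homotopy fixed point structure that $P^{\tilde{h}\Ctwo}$ puts on $P(V)$'' to record, and the objects $(P(W),\psi'_{P(W)})$, $(P(V),\psi'_{P(V)})$ you introduce do not occur anywhere in the computation. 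Moreover, you single out the wrong hypothesis: the involutivity computation does not use the coherence \eqref{eq:catWDualCoher} at all, whereas it does use the naturality of $\Theta$, which your proposal never invokes.

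Concretely, for $\phi:(V,\psi_V)\to(W,\psi_W)$ one computes, using contravariance of $P$,
\[
P^{\tilde{h}\Ctwo}\bigl(P^{\tilde{h}\Ctwo}(\phi)\bigr)
=\psi_W^{-1}\circ P\bigl(\psi_V^{-1}\circ P(\phi)\circ\psi_W\bigr)\circ\psi_V
=\bigl(\psi_W^{-1}\circ P(\psi_W)\bigr)\circ P(P(\phi))\circ\bigl(P(\psi_V^{-1})\circ\psi_V\bigr).
\]
The homotopy fixed point equations $P(\psi_W)\circ\Theta_W=\psi_W$ and $P(\psi_V)\circ\Theta_V=\psi_V$ give $\psi_W^{-1}\circ P(\psi_W)=\Theta_W^{-1}$ and $P(\psi_V^{-1})\circ\psi_V=\Theta_V\circ\psi_V^{-1}\circ\psi_V=\Theta_V$, so the right-hand side equals $\Theta_W^{-1}\circ(P\circ P^{\op})(\phi)\circ\Theta_V$, which is $\phi$ by naturality of $\Theta$. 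With this identity in hand, the strict coherence \eqref{eq:catWDualCoher} for the new triple reduces, as you say, to $P^{\tilde{h}\Ctwo}$ preserving identities, and the identity-on-objects claim is immediate from the definition. So the intended proof is indeed a routine unwinding of definitions, but as written your proposal leaves its key assertion unproved and points to the wrong ingredients for proving it.
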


\section{A Frobenius--Schur indicator for twisted Real representation theory}
\label{sec:FSInd}

\subsection{Twisted Real representation theory}
\label{sec:RealProjRep}

The Real representation theory of a finite group has been studied by many authors as a generalization of representation theory over $\R$ or $\quat$ \cite{wigner1959,dyson1962,atiyah1969,karoubi1970,freed2013b,mbyoung2021b}. We establish relevant aspects of the twisted form of this theory following \cite[\S 3.2]{mbyoung2021b}.

Let $\pi: \Gh \rightarrow \Ctwo$ be a $\Ctwo$-graded finite group with $\pi$ surjective. Fix $\hat{\theta} \in Z^{2+\pi}(B \Gh)$ and a character $\lambda: \Gh \rightarrow U(1)$. Note that $\lambda$ can be interpreted as an element of $Z^1(B\Gh)$. Denote by $\G = \ker \pi$ and $\theta \in Z^2(B\G)$ the restriction of $\hat{\theta}$ along $B\G \rightarrow B \Gh$.

An element $\varsigma \in \Gh \backslash \G$ determines a $\C$-linear exact duality structure $(P^{(\hat{\theta},\lambda,\varsigma)}, \Theta^{(\hat{\theta},\lambda,\varsigma)})$ on $\Rep^{\theta}(\G)$. 
On objects, we have $P^{(\hat{\theta},\lambda,\varsigma)}(V,\rho) = (V^{\vee},\rho^{(\hat{\theta},\lambda,\varsigma)})$, where
\[
\rho^{(\hat{\theta},\lambda,\varsigma)}(g) = \frac{\lambda(g)}{\uptau_{\pi}^{\refl}(\hat{\theta})([\varsigma] g)} \rho(\varsigma g^{-1} \varsigma^{-1})^{\vee}, \qquad g \in \G.
\]
The coefficients
\begin{equation*}
\label{eq:twistTrans2Cocyc}
\uptau^{\refl}_{\pi}(\hat{\theta})([\omega]g) = \hat{\theta}([g^{-1} \vert g])^{\frac{\pi(\omega)-1}{2}} \frac{\hat{\theta}([\omega g^{\pi(\omega)} \omega^{-1} \vert \omega])}{\hat{\theta}([\omega \vert g^{\pi(\omega)}])},
\qquad
g \in \G, \; \omega \in \Gh
\end{equation*}
are best understood in terms of orientation-twisted loop transgression \cite[Theorem 2.8]{noohiYoung2022}, which is a cochain map
\[
\uptau^{\refl}_{\pi} : C^{\bullet+\pi}(B \Gh) \rightarrow C^{\bullet-1}(B (\G \git_R \Gh)).
\]
The codomain is simplicial cochains on the classifying space of the quotient groupoid $\G \git_R \Gh$ resulting from the Real conjugation action of $\Gh$ on $\G$: $\omega \cdot g = \omega g^{\pi(\omega)} \omega^{-1}$, $\omega \in \Gh, \; g \in G$. In geometric terms, $\G \git_R \Gh$ is the unoriented loop groupoid of $B \Gh$, that is, the quotient of the loop groupoid of $B\G$ by the $\Ctwo$-action which reverses orientation of loops and acts on $B\G$ by deck transformations. Continuing, on morphisms $P^{(\hat{\theta},\lambda,\varsigma)}$ is $\C$-linear duality. The natural isomorphism $\Theta^{(\hat{\theta},\lambda,\varsigma)}$ is defined by its components
\[
\Theta^{(\hat{\theta},\lambda,\varsigma)}_V
=
\frac{\lambda(\varsigma)}{\hat{\theta}([\varsigma \vert \varsigma])} \ev_V \circ \rho(\varsigma^{2})^{-1},
\qquad
(V,\rho) \in \Rep^{\theta}(\G)
\]
where $\ev_V : V \rightarrow V^{\vee \vee}$ is the evaluation isomorphism of underlying vector spaces. The normalization of $\Theta^{(\hat{\theta},\lambda,\varsigma)}_V$ ensures that the coherence condition \eqref{eq:catWDualCoher} holds.

\begin{Def}
The \emph{category $\RRep^{(\hat{\theta},\lambda)}(\G)$ of $(\hat{\theta},\lambda)$-twisted Real representations of $\G$} is the homotopy fixed point category $\Rep^{\theta}(\G)^{h\Ctwo}$ of $(P^{(\hat{\theta},\lambda,\varsigma)}, \Theta^{(\hat{\theta},\lambda,\varsigma)})$.
\end{Def}

Up to equivalence, $(P^{(\hat{\theta},\lambda,\varsigma)}, \Theta^{(\hat{\theta},\lambda,\varsigma)})$ depends only on $(\Gh,[\hat{\theta}],\lambda)$. The same is therefore true of $\RRep^{(\hat{\theta},\lambda)}(\G)$ and we drop $\varsigma$ from the notation if it is fixed or the particular realization of the duality structure is not important.

Concretely, an object $(V,\psi_V) \in \RRep^{(\hat{\theta},\lambda)}(\G)$ is an isomorphism $\psi_V: V \rightarrow P^{(\hat{\theta},\lambda)}(V)$ in $\Rep^{\theta}(\G)$ which satisfies $P^{(\hat{\theta},\lambda)}(\psi_V) \circ \Theta^{(\hat{\theta},\lambda)}_V = \psi_V$. A morphism $\phi: (V,\psi_V) \rightarrow (W,\psi_W)$ in $\RRep^{(\hat{\theta},\lambda)}(\G)$ is a morphism in $\Rep^{\theta}(\G)$ which satisfies $P^{(\hat{\theta},\lambda)}(\phi) \circ \psi_W \circ \phi = \psi_V$. Note that $\phi$ is necessarily injective and $\RRep^{(\hat{\theta},\lambda)}(\Gh)$ is neither linear nor abelian.

A more standard representation theoretic interpretation of $\RRep^{(\hat{\theta},\lambda)}(\G)$ is as follows. Given a vector space $V$ and sign $\epsilon \in \Ctwo$, introduce the notation
\[
{^{\epsilon}}V
=
\begin{cases}
V & \mbox{if } \epsilon=1, \\
V^{\vee} & \mbox{if } \epsilon=-1
\end{cases}
\]
with similar notation for linear maps. A $(\hat{\theta},\lambda)$-twisted Real representation of $\G$ is then a vector space $V$ together with linear maps $\rho(\omega): \prescript{\pi(\omega)}{}{V} \rightarrow V$, $\omega \in \Gh$, which satisfy $\rho(e) = \id_V$ and
\begin{equation}
\label{eq:explicitRealRep}
\rho(\omega_2) \circ \prescript{\pi(\omega_2)}{}{\rho(\omega_1)}^{\pi(\omega_2)} \circ \ev_V^{\delta_{\pi(\omega_1), \pi(\omega_2), -1}}
=
\lambda(\omega_1)^{\frac{\pi(\omega_2)-1}{2}}\hat{\theta}([\omega_2 \vert \omega_1]) \rho(\omega_2 \omega_1).
\end{equation}
The notation $\ev_V^{\delta_{\pi(\omega_1), \pi(\omega_2), -1}}$ indicates that $\ev_V$ is included exactly when $\pi(\omega_1)= \pi(\omega_2)=-1$. The equivalence of this interpretation with that of homotopy fixed points follows from noting that a homotopy fixed point $((V,\rho_V),\psi_V)$ determines an extension of $\rho_V$ to $\Gh \setminus \G$ by
\[
\rho_V(\omega) = \hat{\theta}([\omega \varsigma^{-1} \vert \varsigma])^{-1} \rho_V(\omega \varsigma^{-1}) \circ \psi_V^{-1}, \qquad \omega \in \Gh \setminus \G.
\]

A third interpretation of twisted Real representations will also be useful.

\begin{Prop}
\label{prop:RealRepBilinearForm}
Fix $\varsigma \in \Gh \setminus \G$. A $(\hat{\theta},\lambda)$-twisted Real representation of $\G$ is equivalent to the data of a $\theta$-twisted representation of $\G$ on $V$ together with a non-degenerate bilinear form $\langle -, -\rangle : V \times V \rightarrow \C$ which satisfies the twisted $G$-invariance condition
\[
\langle \rho(g)v_1, \rho(\varsigma g \varsigma^{-1})v_2 \rangle
=
\lambda(g) \frac{\hat{\theta}([\varsigma \vert g])}{\hat{\theta}([\varsigma g \varsigma^{-1} \vert \varsigma])} \langle v_1, v_2 \rangle,
\qquad
g \in \G
\]
and the twisted symmetry condition
\[
\langle v_1, v_2 \rangle
=
\lambda(\varsigma) \theta([\varsigma^{-1} \vert \varsigma^{-1}]) \langle \rho(\varsigma^{-2})v_2, v_1 \rangle
\]
for all $v_1,v_2 \in V$.
\end{Prop}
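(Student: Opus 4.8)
The plan is to produce a dictionary between the homotopy-fixed-point description of $\RRep^{(\hat\theta,\lambda)}(\G)$ and the bilinear-form data, by reading off what an isomorphism $\psi_V \colon V \to P^{(\hat\theta,\lambda,\varsigma)}(V)$ in $\Rep^\theta(\G)$ amounts to at the level of underlying vector spaces. Since $P^{(\hat\theta,\lambda,\varsigma)}(V) = (V^\vee,\rho^{(\hat\theta,\lambda,\varsigma)})$, a linear isomorphism $\psi_V \colon V \to V^\vee$ is exactly the datum of a non-degenerate bilinear form $\langle v_1,v_2\rangle := \psi_V(v_1)(v_2)$ on $V$. First I would unwind the condition that $\psi_V$ is a morphism in $\Rep^\theta(\G)$, i.e.\ $\psi_V \circ \rho(g) = \rho^{(\hat\theta,\lambda,\varsigma)}(g)\circ\psi_V$ for all $g\in\G$; substituting the explicit formula $\rho^{(\hat\theta,\lambda,\varsigma)}(g) = \tfrac{\lambda(g)}{\uptau^{\refl}_\pi(\hat\theta)([\varsigma]g)}\rho(\varsigma g^{-1}\varsigma^{-1})^\vee$ and pairing against a vector, this becomes a twisted invariance identity for $\langle-,-\rangle$. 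The main bookkeeping task here is to check that the scalar $\lambda(g)/\uptau^{\refl}_\pi(\hat\theta)([\varsigma]g)$, once the definition of $\uptau^{\refl}_\pi(\hat\theta)([\varsigma]g) = \tfrac{\hat\theta([\varsigma g^{-1}\varsigma^{-1}\vert\varsigma])}{\hat\theta([\varsigma\vert g^{-1}])}$ is inserted and one substitutes $g\mapsto g^{-1}$ or massages factors using the $2$-cocycle condition on $\hat\theta$ restricted to $\G$, matches $\lambda(g)\tfrac{\hat\theta([\varsigma\vert g])}{\hat\theta([\varsigma g\varsigma^{-1}\vert\varsigma])}$ appearing in the statement. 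This is a routine cocycle manipulation of the type already invoked in the Lemma around \eqref{eq:2cocycleKey}.

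Next I would translate the coherence condition $P^{(\hat\theta,\lambda,\varsigma)}(\psi_V)\circ\Theta^{(\hat\theta,\lambda,\varsigma)}_V = \psi_V$ defining a homotopy fixed point. Under the identification $\psi_V\leftrightarrow\langle-,-\rangle$, the dual map $P(\psi_V) = \psi_V^\vee$ (up to the evaluation identification) corresponds to the transposed form $\langle v_2,v_1\rangle$, and $\Theta^{(\hat\theta,\lambda,\varsigma)}_V = \tfrac{\lambda(\varsigma)}{\hat\theta([\varsigma\vert\varsigma])}\ev_V\circ\rho(\varsigma^2)^{-1}$ introduces the scalar $\tfrac{\lambda(\varsigma)}{\hat\theta([\varsigma\vert\varsigma])}$ together with the operator $\rho(\varsigma^2)^{-1}$. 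Writing this out against a pair of vectors yields $\langle v_1,v_2\rangle = \tfrac{\lambda(\varsigma)}{\hat\theta([\varsigma\vert\varsigma])}\langle v_2,\rho(\varsigma^2)^{-1}v_1\rangle$; I then need to check that $\rho(\varsigma^2)^{-1}$ can be rewritten as $\rho(\varsigma^{-2})$ up to the scalar $\hat\theta([\varsigma\vert\varsigma])\theta([\varsigma^{-1}\vert\varsigma^{-1}])/\lambda(\varsigma)\cdot(\text{stuff})$ — more precisely, using $\rho(\varsigma^2)\rho(\varsigma^{-2}) = \theta([\varsigma^2\vert\varsigma^{-2}])\rho(e)$ and normalization identities for $\theta$ on powers of $\varsigma$ — so that the coherence condition becomes exactly the stated twisted symmetry condition $\langle v_1,v_2\rangle = \lambda(\varsigma)\theta([\varsigma^{-1}\vert\varsigma^{-1}])\langle\rho(\varsigma^{-2})v_2,v_1\rangle$. (Strictly, since $\varsigma\in\Gh\setminus\G$, I should interpret $\rho(\varsigma^{\pm2})$ via $\varsigma^2\in\G$ and $\theta$ the restriction of $\hat\theta$; the relation $\theta([\varsigma^{-1}\vert\varsigma^{-1}]) \leftrightarrow \hat\theta([\varsigma\vert\varsigma])$ type identities come from applying the $2$-cocycle condition to $\hat\theta$ on the triple $(\varsigma,\varsigma,\varsigma^{-2})$ or similar.)

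Finally I would assemble the equivalence of categories (or at least the claimed equivalence of data): an object of $\RRep^{(\hat\theta,\lambda)}(\G)$, namely $((V,\rho),\psi_V)$, is sent to $(V,\rho,\langle-,-\rangle)$ with the two displayed conditions, and conversely a form satisfying both conditions defines $\psi_V$ which is automatically an isomorphism in $\Rep^\theta(\G)$ (by non-degeneracy) satisfying the fixed-point equation. I would note in passing that a morphism $\phi\colon(V,\psi_V)\to(W,\psi_W)$ in $\RRep$, characterized by $P(\phi)\circ\psi_W\circ\phi = \psi_V$, translates to the isometry condition $\langle\phi v_1,\phi v_2\rangle_W = \langle v_1,v_2\rangle_V$, so the correspondence is functorial, though the Proposition as stated only asserts the equivalence of objects/data. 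The main obstacle is none of the individual steps conceptually but rather keeping the numerous $\hat\theta$-, $\theta$- and $\lambda$-factors under control through the substitutions $g\mapsto g^{-1}$, $\varsigma\mapsto\varsigma^{-1}$, and the passage between $\rho(\varsigma^2)$ and $\rho(\varsigma^{-2})$; the safeguard is to systematically reduce every identity used to an instance of the $2$-cocycle equation for $\hat\theta$, exactly as in the proof of the Lemma preceding \eqref{eq:oddConj}.
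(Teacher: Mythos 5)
Your overall strategy---unwinding the homotopy fixed point $((V,\rho),\psi_V)$ directly into a bilinear form---is the right idea and is close in spirit to the paper's proof, which passes through the extension of $\rho$ to odd elements and sets $\langle v_1,v_2\rangle=\rho(\varsigma^{-1})^{-1}(v_1)v_2$. However, your specific dictionary $\langle v_1,v_2\rangle:=\psi_V(v_1)(v_2)$ does not produce the constants in the statement, and the step you describe as routine matching is exactly where the argument breaks. Note that under the paper's extension formula one has $\psi_V=\rho(\varsigma)^{-1}$, so your form is built from the action of $\varsigma$, not of $\varsigma^{-1}$. Concretely, equivariance of $\psi_V$ reads $\langle\rho(g)v_1,v_2\rangle=\tfrac{\lambda(g)}{\uptau^{\refl}_{\pi}(\hat{\theta})([\varsigma]g)}\langle v_1,\rho(\varsigma g^{-1}\varsigma^{-1})v_2\rangle$; replacing $v_2$ by $\rho(\varsigma g\varsigma^{-1})v_2$ and simplifying with \eqref{eq:2cocycleKey} (applied with $\omega=\varsigma$, $g_2=g^{-1}$, $g_1=g$) gives
$\langle\rho(g)v_1,\rho(\varsigma g\varsigma^{-1})v_2\rangle=\lambda(g)\,\hat{\theta}([\varsigma g\varsigma^{-1}\vert\varsigma])\,\hat{\theta}([\varsigma\vert g])^{-1}\,\langle v_1,v_2\rangle$,
which is the \emph{reciprocal} of the stated twisting factor. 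No $2$-cocycle manipulation identifies this ratio with its inverse for general $\hat{\theta}$ (if both held for a nonzero form, the square of the ratio would be $1$ for all $g$), so your form satisfies a variant of the Proposition rather than the Proposition as stated; your claim that $\lambda(g)/\uptau^{\refl}_{\pi}(\hat{\theta})([\varsigma]g)$ ``matches'' the stated factor is false. The fixed-point condition likewise yields $\langle v_1,v_2\rangle=\tfrac{\lambda(\varsigma)}{\hat{\theta}([\varsigma\vert\varsigma])}\langle v_2,\rho(\varsigma^{2})^{-1}v_1\rangle$, whose constant and operator placement differ from the stated twisted symmetry and only agree after nontrivial further rewriting.

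The repair is to change the dictionary, not the cocycle bookkeeping. From $\psi_V$ define the odd action $\rho(\omega)=\hat{\theta}([\omega\varsigma^{-1}\vert\varsigma])^{-1}\rho(\omega\varsigma^{-1})\circ\psi_V^{-1}$ and set $\langle v_1,v_2\rangle:=\rho(\varsigma^{-1})^{-1}(v_1)v_2$; equivalently, up to the scalar $\lambda(\varsigma)\hat{\theta}([\varsigma\vert\varsigma^{-1}])$, take the transpose of your form, i.e.\ precompose $\psi_V$ with a scalar multiple of $\rho(\varsigma^{-2})^{-1}$. With this form the computation via \eqref{eq:explicitRealRep} and the $2$-cocycle condition yields exactly the stated invariance and symmetry constants (this is the paper's calculation), and the converse direction then proceeds essentially as you outline, recovering $\rho(\varsigma^{-1})$ from the form and extending to all of $\Gh\setminus\G$.
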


\begin{proof}
Let $(V,\rho)$ be a $(\hat{\theta},\lambda)$-twisted Real representation of $\G$. Fix $\varsigma \in \Gh \setminus \G$ and define a non-degenerate bilinear form on $V$ by
\begin{equation}
\label{eq:bilinFormFromRealRep}
\langle v_1, v_2 \rangle
=
\rho(\varsigma^{-1})^{-1}(v_1)v_2.
\end{equation}
With this definition, $\langle \rho(g)v_1, \rho(\varsigma g \varsigma^{-1})v_2 \rangle$ is equal to
\begin{eqnarray*}
&&
\rho(\varsigma^{-1})^{-1}(\rho(g)(v_1))(\rho(\varsigma g \varsigma^{-1})v_2) \\
&=&
\lambda(\varsigma g) \hat{\theta}([\varsigma^{-1} \vert \varsigma g])^{-1} \rho(\varsigma g)^{- \vee}(\ev_V(v_1))(\rho(\varsigma g \varsigma^{-1})v_2)\\
&=&
\lambda(\varsigma g) \lambda(\varsigma^{-1}) \hat{\theta}([\varsigma^{-1} \vert \varsigma g])^{-1} \hat{\theta}([\varsigma g \vert \varsigma^{-1}])^{-1} \rho(\varsigma g \varsigma^{-1})^{- \vee} \rho(\varsigma^{-1})^{-1}(v_1)(\rho(\varsigma g \varsigma^{-1})v_2)\\
&=&
\lambda(g) \frac{\hat{\theta}([\varsigma \vert g])}{\hat{\theta}([\varsigma g \varsigma^{-1} \vert \varsigma])} \langle v_1, v_2 \rangle.
\end{eqnarray*}
The first two equalities follow from equation \eqref{eq:explicitRealRep} and the third from the $2$-cocycle condition on $\hat{\theta}$. Similarly, we compute
\begin{eqnarray*}
\langle v_1, v_2 \rangle
&=&
\lambda(\varsigma) \hat{\theta}([\varsigma^{-1} \vert \varsigma])^{-1} \rho(\varsigma)^{-\vee} (\ev_V(v_1))v_2 \\
&=&
\lambda(\varsigma)\hat{\theta}([\varsigma^{-1} \vert \varsigma])^{-1} \rho(\varsigma^{-2})^{-\vee} \circ \rho(\varsigma)^{-\vee} (\ev_V(v_1))(\rho(\varsigma^{-2})v_2) \\
&=&
\lambda(\varsigma) \hat{\theta}([\varsigma^{-1} \vert \varsigma])^{-1} \hat{\theta}([\varsigma^{-2} \vert \varsigma])^{-1} \ev_V(v_1)(\rho(\varsigma^{-1})^{-1} \circ \rho(\varsigma^{-2})v_2) \\
&=&
\lambda(\varsigma) \hat{\theta}([\varsigma^{-1} \vert \varsigma^{-1}]) \langle \rho(\varsigma^{-2})v_2,v_1 \rangle.
\end{eqnarray*}

Conversely, given $(V,\rho) \in \Rep^{\theta}(\G)$ with non-degenerate bilinear form $\langle -, - \rangle$ satisfying the conditions of the lemma, define $\rho(\varsigma^{-1})$ by equation \eqref{eq:bilinFormFromRealRep} and set
\[
\rho(\omega) = \hat{\theta}([\omega \varsigma \vert \varsigma^{-1}])^{-1} \rho(\omega \varsigma) \circ \rho(\varsigma^{-1}), \qquad \omega \in \Gh \setminus \G.
\]
The verification that $\rho$ is a $(\hat{\theta},\lambda)$-twisted Real representation of $\G$ mirrors the calculations from the previous paragraph.
\end{proof}

A $(\hat{\theta},\lambda)$-twisted Real representation is called \emph{irreducible} if it has no non-trivial Real subrepresentations. The direct sum $(V,\psi_V) \oplus (W,\psi_W) = (V \oplus W, \psi_V \oplus \psi_W)$ allows for the following formulation of a Real analogue of Maschke's lemma.

\begin{Prop}
\label{prop:RealRestr}
Let $V \in \RRep^{(\hat{\theta},\lambda)}(\G)$ be irreducible. Then the restriction of $V$ to $\G$ is irreducible or of the form $U \oplus P^{(\hat{\theta},\lambda)}(U)$ for an irreducible $U \in \Rep^{\theta}(\G)$.
\end{Prop}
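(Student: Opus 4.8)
The plan is to combine the semisimplicity of $\Rep^{\theta}(\G)$ with the bilinear form description of twisted Real representations from Proposition \ref{prop:RealRepBilinearForm}. Fix $\varsigma \in \Gh \setminus \G$ and present the given irreducible Real representation as a triple $(V,\rho,\langle -,- \rangle)$, where $\langle -,- \rangle$ is a nondegenerate form satisfying the twisted $\G$-invariance and twisted symmetry conditions. If $V|_{\G}$ is irreducible there is nothing to prove, so I would assume it is not and fix an irreducible $\G$-subrepresentation $\iota \colon U \hookrightarrow V$. One uses throughout that $\varsigma g \varsigma^{-1}$ (for $g \in \G$) and $\varsigma^{\pm 2}$ lie in $\G = \ker \pi$, so that $\rho$ of these elements preserves every $\G$-subrepresentation.

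\emph{Step 1: $U$ is isotropic.} Consider the restriction $\langle -,- \rangle|_{U \times U}$. Its radical is a $\G$-subrepresentation of $U$, by the twisted $\G$-invariance and the fact that $\rho(g)$ and $\rho(\varsigma g \varsigma^{-1})$ preserve $U$. If the restriction were nonzero this radical would be proper, hence zero, so $\langle -,- \rangle|_{U}$ would be nondegenerate; as the twisted $\G$-invariance and twisted symmetry conditions also restrict to $U$ (again because $\varsigma g \varsigma^{-1}, \varsigma^{-2} \in \G$), Proposition \ref{prop:RealRepBilinearForm} would exhibit $(U,\rho|_{\G},\langle -,- \rangle|_{U})$ as a Real subrepresentation of $V$, proper since $V|_{\G}$ is reducible, contradicting the irreducibility of $V$. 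Hence $\langle -,- \rangle|_{U \times U} = 0$.

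\emph{Step 2: $V = U \oplus C$ with $C \cong P^{(\hat{\theta},\lambda)}(U)$.} Set $U^{\perp} = \{ v \in V \mid \langle U, v \rangle = 0 \}$. By nondegeneracy of $\langle -,- \rangle$ it has codimension $\dim U$; by the twisted $\G$-invariance it is a $\G$-subrepresentation; by Step 1 it contains $U$. Choose a $\G$-stable complement $C$ of $U^{\perp}$, so $\dim C = \dim U$ and $C \cap U = 0$. Since $C \not\subseteq U^{\perp}$ one has $\langle U, C \rangle \neq 0$, and the twisted symmetry condition together with $\rho(\varsigma^{-2}) U = U$ then forces $\langle C, U \rangle \neq 0$. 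The left radical of $U \times C \to \C$ and the right radical of $C \times U \to \C$ are proper $\G$-subrepresentations of $U$, hence zero, so both pairings are perfect; as $\dim U = \dim C$, the matrix of $\langle -,- \rangle|_{U \oplus C}$ in block form with respect to $U \oplus C$ has vanishing $U$-block but invertible off-diagonal blocks, hence is invertible. Thus $\langle -,- \rangle|_{U \oplus C}$ is nondegenerate, and Proposition \ref{prop:RealRepBilinearForm} exhibits $U \oplus C$ as a Real subrepresentation of $V$; irreducibility of $V$ forces $V = U \oplus C$. Since $U$ is isotropic, $\psi_V$ maps $U$ into the summand $P^{(\hat{\theta},\lambda)}(C)$ of $P^{(\hat{\theta},\lambda)}(V) = P^{(\hat{\theta},\lambda)}(U) \oplus P^{(\hat{\theta},\lambda)}(C)$, and a dimension count shows $\psi_V$ restricts to a $\G$-equivariant isomorphism $U \to P^{(\hat{\theta},\lambda)}(C)$; applying $P^{(\hat{\theta},\lambda)}$ and the coherence isomorphism $\Theta^{(\hat{\theta},\lambda)}_C$ then gives $C \cong P^{(\hat{\theta},\lambda)}(U)$, so $V|_{\G} \cong U \oplus P^{(\hat{\theta},\lambda)}(U)$.

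I expect the main obstacle to be the second use of the irreducibility of $V$, in Step 2: one must produce a genuine Real subrepresentation of dimension $2 \dim U$, which is precisely what bounds the length of $V|_{\G}$ by two. The delicate point is that $U$ and $P^{(\hat{\theta},\lambda)}(U)$ may be isomorphic — the quaternionic doubling phenomenon — so one cannot argue that $U \oplus P^{(\hat{\theta},\lambda)}(U)$ is a sum of two non-isomorphic irreducibles; instead one needs the nondegeneracy of $\langle -,- \rangle|_{U \oplus C}$, which itself rests on the twisted symmetry condition forcing the left and right perpendiculars of the $\rho(\varsigma^{-2})$-stable subspace $U$ to coincide. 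Everything else — the existence of the complements and the fact that the restricted conditions persist — is routine with semisimplicity and the definitions.
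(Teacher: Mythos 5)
Your proof is correct, and it works in the same framework as the paper's: both arguments pass through the bilinear form description of Proposition \ref{prop:RealRepBilinearForm}, look at the orthogonal complement $U^{\perp}$ of an irreducible $\G$-subrepresentation $U$, and use irreducibility of $V$ as a Real representation to cap the length of $V_{\vert \G}$ at two. Where you differ is in the handling of the isotropic case, and there your version is the more complete one. The paper's proof is a short sketch: it asserts $V_{\vert \G}=U\oplus U^{\perp}$ and then produces $P^{(\hat{\theta},\lambda)}(U)\xrightarrow{\sim}U^{\perp}$ by restricting $\rho(\varsigma)$; but, as your Step 1 shows, whenever $V_{\vert \G}$ is reducible every irreducible subrepresentation $U$ is isotropic (a nondegenerate restricted form would make $U$ a proper Real subrepresentation), so $U\subseteq U^{\perp}$ and that splitting has to be replaced by precisely the kind of bookkeeping you do in Step 2: choose a $\G$-stable complement $C$ of $U^{\perp}$, use irreducibility of $U$ together with twisted invariance and twisted symmetry to see that the pairings between $U$ and $C$ are perfect, conclude that the form on $U\oplus C$ is nondegenerate, and only then let irreducibility of $V$ force $V=U\oplus C$. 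This is also exactly what handles the quaternionic case $P^{(\hat{\theta},\lambda)}(U)\simeq U$ that you rightly flag as the delicate point, where one cannot argue via non-isomorphic summands. Two small remarks: the identification $C\simeq P^{(\hat{\theta},\lambda)}(U)$ can be read off slightly more directly from the perfect pairing $U\times C\to\C$ and twisted $\G$-invariance (this is what the paper's restriction of $\rho(\varsigma)$ is doing); and if you route it through $\psi_V$, note that $\psi_V$ and the form map $v\mapsto\langle v,-\rangle$ differ by a nonzero scalar and precomposition with $\rho(\varsigma^{\pm 2})$, an element of $\G$ preserving $U$ and $C$, so isotropy of $U$ does indeed give $\psi_V(U)\subseteq P^{(\hat{\theta},\lambda)}(C)$ inside $P^{(\hat{\theta},\lambda)}(V)$ as you claim.
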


\begin{proof}
Interpret $V$ as a $\theta$-twisted representation of $\G$ with compatible bilinear form $\langle-,-\rangle$, as in Proposition \ref{prop:RealRepBilinearForm}, and suppose that the restriction $V_{\vert \G}$ has a non-trivial irreducible $\theta$-twisted subrepresentation $U$. The twisted $\G$-invariance of $\langle-,-\rangle$ implies that the orthogonal complement $U^{\perp}$ is a $\theta$-twisted subrepresentation of $V_{\vert \G}$ and $V_{\vert \G} = U \oplus U^{\perp}$ as $\theta$-twisted representations. Since $V$ is irreducible, the map $\rho(\varsigma): V^{\vee} \rightarrow V$ restricts to a map $\rho(\varsigma): U^{\vee} \rightarrow U^{\perp}$ which defines an isomorphism $P^{(\hat{\theta},\lambda)}(U) \xrightarrow[]{\sim} U^{\perp}$ of $\theta$-twisted representations.
\end{proof}


\subsection{A Frobenius--Schur indicator}
\label{sec:FSIndDetail}

Keep the notation of Section \ref{sec:RealProjRep}.

\begin{Def}
The \emph{$(\hat{\theta},\lambda)$-twisted Frobenius--Schur element} is
\[
\nu_{(\hat{\theta},\lambda)}
=
\sum_{\varsigma \in \Gh \setminus \G} \frac{\lambda(\varsigma)}{\hat{\theta}([\varsigma \vert \varsigma])} l_{\varsigma^2}
\in \C^{\theta^{-1}}[\G].
\]
\end{Def}

When $(\hat{\theta},\lambda)$ is clear from the context, we write $\nu$ for $\nu_{(\hat{\theta},\lambda)}$. Note that
\[
\nu_{(\hat{\theta},\lambda)}=-\nu_{(\delta\hat{\theta},\lambda)}=-\nu_{(\hat{\theta},\pi\lambda)}.
\]

\begin{Lem}
\label{lem:FSClassFun}
The element $\nu_{(\hat{\theta},\lambda)}$ is a $\theta$-twisted class function on $\G$.
\end{Lem}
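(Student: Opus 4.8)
The plan is to show directly that the coefficients of $\nu_{(\hat{\theta},\lambda)}$, viewed as a function $\G \to \C$, transform under conjugation by the loop transgression cocycle $\uptau(\theta^{-1})$. Recall from Section \ref{sec:twistRepThy} that an element $\sum_{g} a_g l_g \in \C^{\theta^{-1}}[\G]$ is a $\theta$-twisted class function precisely when $a_{hgh^{-1}} = \uptau(\theta^{-1})([h]g)^{-1} a_g$ for all $g, h \in \G$, where $\uptau(\theta^{-1})([h]g) = \frac{\theta^{-1}([hgh^{-1} \vert h])}{\theta^{-1}([h \vert g])}$. So I first want to identify the coefficient of $l_g$ in $\nu$: it is $\sum_{\varsigma \in \Gh \setminus \G,\, \varsigma^2 = g} \frac{\lambda(\varsigma)}{\hat{\theta}([\varsigma \vert \varsigma])}$, the sum running over odd elements squaring to $g$.

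The key observation is that left multiplication by $h \in \G$ gives a bijection $\{\varsigma \in \Gh \setminus \G : \varsigma^2 = g\} \to \{\varsigma' \in \Gh \setminus \G : \varsigma'^2 = hgh^{-1}\}$ via $\varsigma \mapsto h \varsigma h^{-1}$; indeed $(h\varsigma h^{-1})^2 = h \varsigma^2 h^{-1} = hgh^{-1}$, and $h\varsigma h^{-1}$ is again odd since $\pi(h) = 1$. So the coefficient of $l_{hgh^{-1}}$ is $\sum_{\varsigma^2 = g} \frac{\lambda(h\varsigma h^{-1})}{\hat{\theta}([h\varsigma h^{-1} \vert h \varsigma h^{-1}])}$. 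Since $\lambda$ is a homomorphism, $\lambda(h\varsigma h^{-1}) = \lambda(\varsigma)$. The remaining task is to compare $\hat{\theta}([h\varsigma h^{-1} \vert h\varsigma h^{-1}])$ with $\hat{\theta}([\varsigma \vert \varsigma])$ and match the discrepancy to $\uptau(\theta^{-1})([h]g)$.

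This is exactly where equation \eqref{eq:oddConj} of the opening Lemma enters: with $\omega = h \in \G$ (so $\pi(h) = 1$), it reads $\frac{\hat{\theta}([h\varsigma h^{-1} \vert h\varsigma h^{-1}])}{\hat{\theta}([\varsigma \vert \varsigma])^{-1}} = \frac{\hat{\theta}([h \vert \varsigma^2])}{\hat{\theta}([h\varsigma^2 h^{-1} \vert h])} = \frac{\hat{\theta}([h \vert g])}{\hat{\theta}([hgh^{-1} \vert h])}$. Rearranging, $\hat{\theta}([h\varsigma h^{-1} \vert h\varsigma h^{-1}])^{-1} = \hat{\theta}([\varsigma \vert \varsigma])^{-1} \cdot \frac{\hat{\theta}([hgh^{-1} \vert h])}{\hat{\theta}([h \vert g])}$. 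Since $\hat{\theta}$ restricts to $\theta$ on $\G$, all the arguments here lie in $\G$ and the right-hand factor is $\theta([hgh^{-1}\vert h])/\theta([h\vert g]) = \uptau(\theta)([h]g) = \uptau(\theta^{-1})([h]g)^{-1}$. Therefore the coefficient of $l_{hgh^{-1}}$ equals $\uptau(\theta^{-1})([h]g)^{-1}$ times the coefficient of $l_g$, which is precisely the defining condition for a $\theta$-twisted class function in $\C^{\theta^{-1}}[\G]$.

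I do not anticipate a serious obstacle here; the only point requiring care is bookkeeping the inverses and sign conventions in passing between $\uptau(\theta)$ and $\uptau(\theta^{-1})$, and confirming that equation \eqref{eq:oddConj} is being applied with the correct element ($\omega$ even) so that the sign $-\pi(\omega)$ in its statement becomes $-1$. One should also note that the bijection argument lets the identity be checked term-by-term on the sum defining the coefficient, so no interaction between distinct $\varsigma$ occurs. It may be cleanest to present the computation at the level of a single $\delta$-function: apply \eqref{eq:oddConj} to see that $h \cdot \big(\frac{\lambda(\varsigma)}{\hat{\theta}([\varsigma\vert\varsigma])} l_{\varsigma^2}\big) \cdot h^{-1}$ transforms correctly, then sum over $\varsigma \in \Gh \setminus \G$.
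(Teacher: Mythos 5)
Your proposal follows exactly the paper's route: reduce to a coefficient identity via the bijection $\varsigma \mapsto h\varsigma h^{-1}$ between odd square roots of $g$ and of $hgh^{-1}$ (this is conjugation, not left multiplication, but the map you write is the right one), use $\lambda(h\varsigma h^{-1})=\lambda(\varsigma)$, and invoke \eqref{eq:oddConj} with $\omega=h$ even. The identity you display,
$\hat{\theta}([h\varsigma h^{-1}\vert h\varsigma h^{-1}])^{-1} = \hat{\theta}([\varsigma\vert\varsigma])^{-1}\,\frac{\hat{\theta}([hgh^{-1}\vert h])}{\hat{\theta}([h\vert g])}$,
is precisely the identity recorded in the paper's own proof, and your bookkeeping of $\uptau(\theta)$ versus $\uptau(\theta^{-1})$ is correct, so the argument is right in substance and essentially identical to the paper's.

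One caveat, at the very step you flagged as delicate: your ``rearranging'' does not follow from \eqref{eq:oddConj} as printed. With the printed exponent $-\pi(\omega)$ and $\omega=h$ even, the left-hand side is $\hat{\theta}([h\varsigma h^{-1}\vert h\varsigma h^{-1}])\cdot\hat{\theta}([\varsigma\vert\varsigma])$, so solving for $\hat{\theta}([h\varsigma h^{-1}\vert h\varsigma h^{-1}])^{-1}$ would produce the factor $\hat{\theta}([\varsigma\vert\varsigma])^{+1}$, differing from what you wrote (and from what the lemma needs) by $\hat{\theta}([\varsigma\vert\varsigma])^{2}$. The resolution is that the exponent in \eqref{eq:oddConj} is evidently a typo for $\pi(\omega)$: testing on a coboundary $\hat{\theta}=d\beta$, where $\hat{\theta}([\varsigma\vert\varsigma])=\beta(\varsigma^{2})^{-1}$, shows the printed version would force $\beta(\varsigma^{2})^{2}=1$, whereas applying the twisted $2$-cocycle condition to the triples $(h\varsigma h^{-1},h\varsigma h^{-1},h)$, $(h\varsigma h^{-1},h,\varsigma)$ and $(h,\varsigma,\varsigma)$ gives directly $\hat{\theta}([hgh^{-1}\vert h])\,\hat{\theta}([h\varsigma h^{-1}\vert h\varsigma h^{-1}]) = \hat{\theta}([\varsigma\vert\varsigma])\,\hat{\theta}([h\vert g])$, which is exactly your displayed identity. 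So your sign slip happens to cancel the paper's typo; with the corrected form of \eqref{eq:oddConj} your proof is complete as written.
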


\begin{proof}
The statement amounts to the identity
\[
\hat{\theta}([h \varsigma h^{-1} \vert h \varsigma h^{-1}])^{-1}
=
\uptau(\theta)([h] \varsigma^2) \hat{\theta}([\varsigma \vert \varsigma])^{-1},
\qquad h \in \G, \; \varsigma \in \Gh \backslash \G,
\]
which is seen to hold using equation \eqref{eq:oddConj}.
\end{proof}

We require the following elementary result from linear algebra.

\begin{Lem}
\label{lem:traceTensorDual}
Let $V$ be a finite dimensional vector space and $\phi \in \Hom_{\C}(V,V)$. Then $\tr_V \,\phi$ is equal to the trace of the map
\[
\iota_{\phi}: \Hom_{\C}(V, V^{\vee}) \rightarrow \Hom_{\C}(V, V^{\vee}), \qquad f \mapsto f^{\vee} \circ \ev_V \circ \phi.
\]
\end{Lem}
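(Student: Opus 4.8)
The plan is to compute the trace of $\iota_\phi$ directly by choosing a convenient basis of $\Hom_\C(V,V^\vee)$. First I would fix a basis $\{e_1,\dots,e_n\}$ of $V$ with dual basis $\{e^1,\dots,e^n\}$ of $V^\vee$. Then the collection $\{E_{ij}\}$ defined by $E_{ij}(v) = e^j(v)\,e^i$ (sending $e_k \mapsto \delta_{jk} e^i$) forms a basis of $\Hom_\C(V,V^\vee)$, indexed by pairs $(i,j)$. The trace of $\iota_\phi$ is then $\sum_{i,j}$ of the $(i,j)$-coefficient of $\iota_\phi(E_{ij})$ when expanded in this basis.

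The next step is a bookkeeping computation: write $\phi(e_k) = \sum_l \phi_{lk} e_l$, so $\tr_V\phi = \sum_k \phi_{kk}$, and then trace through the three maps making up $\iota_\phi(E_{ij}) = E_{ij}^\vee \circ \ev_V \circ \phi$. One has $(\ev_V \circ \phi)(e_k) = \ev_V(\phi(e_k)) = \sum_l \phi_{lk}\,\ev_V(e_l)$, where $\ev_V(e_l) \in V^{\vee\vee}$ is evaluation at $e_l$. Applying $E_{ij}^\vee: V^{\vee\vee} \to V^\vee$, which is precomposition with $E_{ij}$, gives $E_{ij}^\vee(\ev_V(e_l)) = \ev_V(e_l) \circ E_{ij}$; this is the functional $v \mapsto e^j(v)\, e^i(e_l) = \delta_{il}\, e^j(v)$, i.e. $\delta_{il}\, e^j \in V^\vee$. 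Hence $\iota_\phi(E_{ij})(e_k) = \sum_l \phi_{lk}\,\delta_{il}\, e^j = \phi_{ik}\, e^j$, so $\iota_\phi(E_{ij}) = \sum_k \phi_{ik}\, e^j(\,\cdot\,)\,\cdots$; more precisely $\iota_\phi(E_{ij})$ is the map $e_k \mapsto \phi_{ik} e^j$, which equals $\sum_m \phi_{im} E_{mj}$ when we note $E_{mj}(e_k) = \delta_{jk} e^m$ — wait, indices must be matched carefully: $\iota_\phi(E_{ij})(e_k) = \phi_{ik} e^j$ should be re-expanded, and the coefficient of the basis element $E_{ij}$ itself in this expansion is what contributes to the trace.

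Carrying out that final expansion: since $\iota_\phi(E_{ij})$ sends $e_k \mapsto \phi_{ik}\,e^j$, and the basis element $E_{ab}$ sends $e_k \mapsto \delta_{bk}\, e^a$, we get $\iota_\phi(E_{ij}) = \sum_{a} \phi_{ia}\, \widetilde{E}$ where the map $e_k \mapsto \phi_{ik} e^j$ decomposes as $\sum_b (\text{something}) E_{?b}$; the cleanest statement is that the coefficient of $E_{ij}$ in $\iota_\phi(E_{ij})$ equals $\phi_{ii}$. Summing over all basis elements, $\tr\,\iota_\phi = \sum_{i,j}\phi_{ii} = n\sum_i \phi_{ii}$ — this overcounts by a factor of $n$, which signals that I have mis-expanded and the correct decomposition is $\iota_\phi(E_{ij}) = \sum_{m} \phi_{im}\, F_{mj}$ for appropriate $F$, contributing only when $m=i$ and giving coefficient $\phi_{ii}$ summed over $j$ — so the factor must be absorbed. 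The honest resolution, which I would write out, is that the pairing $E_{ij} \leftrightarrow (i,j)$ identifies $\Hom_\C(V,V^\vee) \cong V^\vee \otimes V^\vee$, under which $\iota_\phi$ acts as $\phi^\vee \otimes \id$ (or $\id \otimes \phi^\vee$) on one tensor factor only — directly from the formula $f \mapsto f^\vee \circ \ev_V \circ \phi$, the $\ev_V$ and one dualization cancel the contravariance — so $\tr\,\iota_\phi = \tr(\phi^\vee)\cdot \tr(\id_{V^\vee})$? No: the correct identification makes $\iota_\phi = \phi^{\mathsf{t}} \otimes \id_{V}$ acting on $\Hom_\C(V,V^\vee) \cong V^\vee \otimes V^\vee$ only in the \emph{first} slot via the map $V \to V^\vee$?

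Let me restate the plan cleanly without the erroneous detour. The key identification is $\Hom_\C(V,V^\vee) \cong V^\vee \otimes V^\vee$ via $f \mapsto \sum$ (matrix of $f$), and under this iso the operator $f \mapsto f^\vee \circ \ev_V \circ \phi$ becomes, after untangling $\ev_V$ and the dual, precisely $(\phi^{\mathsf t} \otimes \id)$ acting on the first factor where $\phi^{\mathsf t}: V^\vee \to V^\vee$ is the transpose — whence $\tr\,\iota_\phi = \tr(\phi^{\mathsf t})\cdot \dim V$ is \emph{wrong}; rather the correct bookkeeping (the one I computed above, $\iota_\phi(E_{ij})$ has $E_{ij}$-coefficient $\phi_{ii}$ but one must also track the $j$-index correctly) yields $\iota_\phi(E_{ij}) = \sum_m \phi_{im} E_{mj}$, so the diagonal coefficient is $\phi_{ii}$ and $\tr\,\iota_\phi = \sum_{i,j}[\iota_\phi(E_{ij})]_{E_{ij}} = \sum_{i,j}\phi_{ii}$ — and since the $j$ sum is spurious I have mislabeled: in fact $\iota_\phi(E_{ij})$ should land in maps with second index $j$ fixed, giving $\sum_m \phi_{im}E_{mj}$, diagonal term $m=i$, coefficient $\phi_{ii}$, and summing over the full basis $\{(i,j)\}$ gives $n\sum_i\phi_{ii}$; the resolution is that $\{E_{ij}\}$ is \emph{not} dual to itself and one must use the trace pairing, under which the trace is $\sum_i \phi_{ii} = \tr_V\phi$ exactly. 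I would present this final clean computation: expand $\iota_\phi$ in the basis $\{E_{ij}\}$, observe $\iota_\phi(E_{ij}) = \sum_m \phi_{im}E_{mj}$, and read off that the trace (sum of diagonal entries of this $n^2 \times n^2$ matrix, where the diagonal is the $(i,j)\to(i,j)$ entry, which is $\phi_{ii}$ — no wait, it's $\phi_{ii}$ requires $m=i$ in the $E_{mj}$ expansion, giving entry $\phi_{ii}$, but this is independent of $j$) equals $\sum_{i,j} \delta[\text{diag}] = \sum_i \phi_{ii}$ once one correctly notes the $(i,j),(i,j)$-entry picks out $m=i$ only. The main obstacle is purely notational: getting the index bookkeeping for the three composed maps $\phi$, $\ev_V$, and $(-)^\vee$ exactly right so the spurious-looking second index does not produce an erroneous factor of $\dim V$; I would do this carefully by writing $\ev_V(e_l) = e_l^{\vee\vee}$ and tracking that $E_{ij}^\vee(e_l^{\vee\vee}) = \delta_{il}e^j$, giving $\iota_\phi(E_{ij})(e_k) = \sum_l\phi_{lk}\delta_{il}e^j = \phi_{ik}e^j$, hence $\iota_\phi(E_{ij}) = \sum_m \phi_{?}E_{?j}$ — explicitly $e_k \mapsto \phi_{ik}e^j$ means $\iota_\phi(E_{ij}) = \sum_a \phi_{ia}\cdot(e_k \mapsto \delta_{ak}e^i)\cdots$ no: $e_k \mapsto \phi_{ik}e^j$ is the map $\sum_a \phi_{ia} G_{aj}$ where $G_{aj}(e_k) = \delta_{ak}e^j$, so $G_{aj} = E_{ja}$? since $E_{ja}(e_k) = \delta_{ak}e^j$. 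Yes. So $\iota_\phi(E_{ij}) = \sum_a \phi_{ia} E_{ja}$, and the coefficient of $E_{ij}$ is $\phi_{ij}\delta_{?}$ — need $j=j$ (ok) and $a=i$ giving $\phi_{ii}$... and again summing over $(i,j)$ gives the factor. The genuine fix, which I'll commit to in the writeup: the diagonal entry of $\iota_\phi$ at index $(i,j)$ is the $E_{ij}$-coefficient of $\iota_\phi(E_{ij}) = \sum_a\phi_{ia}E_{ja}$, which is nonzero only if the pair $(j,a) = (i,j)$, i.e. $j=i$ and $a=j$, giving $\phi_{ij}$; so $\tr\,\iota_\phi = \sum_{i=j}\phi_{ij} = \sum_i\phi_{ii} = \tr_V\phi$, as claimed — no spurious factor, because the constraint $(j,a)=(i,j)$ forces $i=j$. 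That is the argument.

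\medskip

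\noindent\textbf{Proof plan.} Fix a basis $\{e_1,\dots,e_n\}$ of $V$ with dual basis $\{e^1,\dots,e^n\}$ of $V^\vee$, and write $\phi(e_k) = \sum_l \phi_{lk}e_l$, so that $\tr_V\phi = \sum_k\phi_{kk}$. For each ordered pair $(i,j)$ let $E_{ij}\in\Hom_\C(V,V^\vee)$ be defined by $E_{ij}(e_k) = \delta_{jk}\,e^i$; these form a basis. First I would compute $\ev_V(e_l)\in V^{\vee\vee}$ as the functional $f\mapsto f(e_l)$ and check that $E_{ij}^\vee(\ev_V(e_l)) = \ev_V(e_l)\circ E_{ij} = \delta_{il}\,e^j$. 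Chaining the three maps, $\iota_\phi(E_{ij})(e_k) = E_{ij}^\vee\big(\ev_V(\phi(e_k))\big) = \sum_l \phi_{lk}\,E_{ij}^\vee(\ev_V(e_l)) = \phi_{ik}\,e^j$, hence $\iota_\phi(E_{ij}) = \sum_a \phi_{ia}\,E_{ja}$. The coefficient of $E_{ij}$ in this expansion is nonzero only when $(j,a) = (i,j)$, forcing $i=j$ and $a=i$, in which case it equals $\phi_{ii}$. Therefore $\tr\,\iota_\phi = \sum_{i,j}[\iota_\phi(E_{ij})]_{E_{ij}} = \sum_i \phi_{ii} = \tr_V\phi$. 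The only subtlety, and the step I would write out most carefully, is this last index bookkeeping through the composite $(-)^\vee\circ\ev_V\circ(-)$: the apparent extra index $j$ does not inflate the trace because the diagonal constraint collapses it to $i=j$; a basis-free phrasing is that, under the canonical isomorphism $\Hom_\C(V,V^\vee)\cong V^\vee\otimes V^\vee$, the operator $\iota_\phi$ is the flip composed with $\id\otimes\phi^{\mathsf t}$, whose trace is $\tr(\phi^{\mathsf t}) = \tr_V\phi$.
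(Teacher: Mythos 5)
Your final proof plan is correct and is essentially the paper's own argument: both expand $\iota_{\phi}$ in the matrix-unit basis of $\Hom_{\C}(V,V^{\vee})$, compute $\iota_{\phi}(E_{ij}) = \sum_{a}\phi_{ia}E_{ja}$, and note that the diagonal constraint forces $i=j$ (and $a=i$), so $\tr\,\iota_{\phi} = \sum_i \phi_{ii} = \tr_V\,\phi$ with no spurious factor of $\dim V$. The meandering detours earlier in your write-up are resolved correctly in the final paragraph, and the closing basis-free remark (that $\iota_{\phi}$ is the flip composed with $\id\otimes\phi^{\vee}$ on $V^{\vee}\otimes V^{\vee}$, whose trace is $\tr\,\phi^{\vee}$) is also valid.
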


\begin{proof}
Let $\dim_{\C} V= v$. Fix a basis of $V$ with induced basis $\{E_{ij}\}_{i,j=1}^v$ of $\Hom_{\C}(V,V)$. Writing $\phi = \sum_{i,j=1}^v \phi_{ij} E_{ij}$ in this basis, we compute $\iota_{\phi} (E_{ij}) = \sum_{k=1}^v \phi_{ik} E_{jk}$ so that
\[
\tr_{\Hom_{\C}(V,V^{\vee})} \, \iota_{\phi}
=
\sum_{i,j=1}^v \iota_{\phi} (E_{ij})_{ij}
=
\sum_{i,j,k=1}^v \phi_{ik}(E_{jk})_{ij}
=
\sum_{i,j,k=1}^v \phi_{ik} \delta_{ji} \delta_{kj}
=
\tr_V \, \phi. \qedhere
\]
\end{proof}

Let $V \in \Rep^{\theta}(\G)$ and $\phi \in \Hom_{\G}(V,V)$. Consider the map
\[
\iota_{\phi}: \Hom_{\G}(V,P^{(\hat{\theta},\lambda,\varsigma)}(V)) \rightarrow \Hom_{\G}(V,P^{(\hat{\theta},\lambda,\varsigma)}(V)),
\qquad
f \mapsto P^{(\hat{\theta},\lambda,\varsigma)}(f) \circ \Theta^{(\hat{\theta},\lambda,\varsigma)}_V \circ \phi.
\]
Independence of the duality structure up to equivalence on $\varsigma \in \Gh \setminus \G$ implies that $\iota_{\phi}$ is independent of $\varsigma$. The coherence condition \eqref{eq:catWDualCoher} implies that $\iota:= \iota_{\id_V}$ is an involution.

For each $V \in \Rep^{\theta}(\G)$, define
\[
\bb^V : \Hom_{\G}(V,V) \rightarrow Z(\C^{\theta^{-1}}[\G]),
\qquad
\phi \mapsto \sum_{g \in \G}  \tr_V (\phi \circ \rho_V(g)) l_g.
\]
Note that $\bb^V(\id_V) = \chi_V$.

\begin{Thm}
\label{thm:LefGroupAlg}
For each $V \in \Rep^{\theta}(\G)$ and $\phi \in \Hom_{\G}(V,V)$, there is an equality
\[
\tr_{\Hom_{\G}(V,P^{(\hat{\theta},\lambda)}(V))} \, \iota_{\phi}
=
\langle \bb^V(\phi), \nu_{(\hat{\theta},\lambda)} \rangle_{\G}.
\]
\end{Thm}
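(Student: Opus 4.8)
The plan is to reduce the identity to Lemma \ref{lem:traceTensorDual} by unwinding both sides into concrete sums over $\G$ and comparing. First I would analyze the left-hand side. The space $\Hom_{\G}(V, P^{(\hat{\theta},\lambda)}(V))$ sits inside $\Hom_{\C}(V, V^{\vee})$ as the $\G$-invariants, and $\iota_\phi$ is the restriction to this subspace of the map $\iota_{\tilde\phi}: \Hom_{\C}(V,V^{\vee}) \to \Hom_{\C}(V,V^{\vee})$, $f \mapsto f^{\vee} \circ \ev_V \circ \tilde\phi$, for an appropriate $\tilde\phi \in \End_{\C}(V)$ built from $\phi$ and the $\Theta$-twist; indeed $\Theta^{(\hat{\theta},\lambda,\varsigma)}_V = \frac{\lambda(\varsigma)}{\hat{\theta}([\varsigma\vert\varsigma])} \ev_V \circ \rho_V(\varsigma^2)^{-1}$, and $P^{(\hat{\theta},\lambda,\varsigma)}(f)$ is just $\C$-linear duality $f^{\vee}$ composed with the correct source/target identifications, so one reads off $\tilde\phi = \frac{\lambda(\varsigma)}{\hat{\theta}([\varsigma\vert\varsigma])} \rho_V(\varsigma^2)^{-1} \circ \phi$ up to the evaluation bookkeeping. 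Since $\iota$ preserves the $\G$-invariant subspace and is an involution there, I would like to compute $\tr_{\Hom_{\G}} \iota_\phi$ by averaging: write the projector $e = \frac{1}{\vert\G\vert}\sum_g \rho_V(g) \otimes \cdots$ onto $\G$-invariants inside $\Hom_{\C}(V,V^{\vee})$ and compute $\tr(\iota_{\tilde\phi} \circ e)$ as a trace over the full $\Hom_{\C}(V,V^{\vee})$.

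The key step is then to evaluate this full trace. By Lemma \ref{lem:traceTensorDual}, for a single endomorphism $\psi$ of $V$ one has $\tr_{\Hom_{\C}(V,V^{\vee})} \iota_\psi = \tr_V \psi$; I would apply a version of this where the $\ev_V \circ \tilde\phi$ slot is further composed with each $\rho_V(g)$ coming from the averaging projector, so that the contribution of the group element $g$ to $\tr(\iota_{\tilde\phi} \circ e)$ is a scalar (from the $\theta$-cocycle and $\lambda$ normalizations, plus the conjugation-equivariance factors $\uptau(\theta)$) times $\tr_V\big(\tilde\phi \circ (\text{something involving } \rho_V(g))\big)$. After collecting the normalization constants this should reorganize into $\frac{1}{\vert\G\vert}\sum_g \theta([g^{-1}\vert g]) \cdot \tr_V(\phi \circ \rho_V(g^{-1})) \cdot \frac{\lambda(\varsigma)}{\hat\theta([\varsigma\vert\varsigma])}[\text{coefficient of } l_{\varsigma^2 \cdot(\cdots)}]$, matching the pairing formula. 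On the right-hand side, I would simply expand: $\bb^V(\phi) = \sum_{g}\tr_V(\phi\circ\rho_V(g)) l_g$, and $\nu_{(\hat\theta,\lambda)} = \sum_{\varsigma\in\Gh\setminus\G}\frac{\lambda(\varsigma)}{\hat\theta([\varsigma\vert\varsigma])} l_{\varsigma^2}$, so by the definition of $\langle-,-\rangle_{\G} = \langle-,-\rangle_{\G,\theta^{-1}}$ one gets $\langle \bb^V(\phi), \nu\rangle_\G = \frac{1}{\vert\G\vert}\sum_{\varsigma\in\Gh\setminus\G} \theta^{-1}([\varsigma^{-2}\vert\varsigma^2])\, \tr_V(\phi\circ\rho_V(\varsigma^{-2}))\, \frac{\lambda(\varsigma)}{\hat\theta([\varsigma\vert\varsigma])}$ (using that $l_{\varsigma^2}^{-1}$ indexing picks out $g = \varsigma^{-2}$); here one uses that $\varsigma \mapsto \varsigma^2$ ranges over the relevant squares as $\varsigma$ ranges over $\Gh\setminus\G$ and that $\nu$ being a $\theta$-twisted class function (Lemma \ref{lem:FSClassFun}) makes the pairing well-defined.

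The main obstacle will be bookkeeping the cocycle and character factors so that the left-hand trace computation and the right-hand pairing expansion land on exactly the same expression. Two sources of friction: (i) the sum on the left runs over $g\in\G$ via the averaging projector, while the sum on the right runs over $\varsigma\in\Gh\setminus\G$, so I must show the projector-averaged trace collapses onto terms where the "group element" is forced to be of the form $\varsigma^{-2}$ (this is where the structure of $P^{(\hat\theta,\lambda,\varsigma)}$ as depending on conjugation by $\varsigma$ enters — the $\G$-invariance condition $f \circ \rho_V(g) = \rho_{V^\vee}^{(\hat\theta,\lambda,\varsigma)}(g)\circ f$ ties $g$-translation on the source to $\varsigma g^{-1}\varsigma^{-1}$-translation, and iterating forces the $\varsigma^2$); and (ii) reconciling the normalization $\uptau^{\refl}_\pi(\hat\theta)$ appearing in $\rho^{(\hat\theta,\lambda,\varsigma)}$ with the plain $\uptau(\theta)$ and $\theta([g^{-1}\vert g])$ factors in $\langle-,-\rangle_{\G,\theta^{-1}}$, which is a direct but delicate application of equations \eqref{eq:2cocycleKey} and \eqref{eq:oddConj}. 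I would organize this by first doing the case $\phi = \id_V$, $V$ irreducible — where the left side is $\pm 1$ or $0$ by Schur and the coherence condition, matching Corollary \ref{cor:FSInd} — to fix all sign and normalization conventions, and only then handle general $\phi$ and general $V$ by linearity and semisimplicity, since both sides are manifestly linear in $\phi$ and additive in $V$.
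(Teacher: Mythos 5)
Your proposal is correct and follows essentially the paper's own route: both sides are handled the same way, reducing the left-hand trace to Lemma \ref{lem:traceTensorDual} plus an averaging step and then matching the direct expansion of $\langle \bb^V(\phi),\nu_{(\hat{\theta},\lambda)}\rangle_{\G}$, and your average over $\G$ via the invariance projector in $\Hom_{\C}(V,V^{\vee})$ is just the paper's average over $\varsigma\in\Gh\setminus\G$ after the reparametrization $\varsigma\mapsto\varsigma g$ (indeed it makes explicit the subspace-versus-full-space trace point that the paper's write-up compresses). The closing reduction to irreducible $V$ with $\phi=\id_V$ is unnecessary, since the computation is uniform in $V$ and $\phi$, and Corollary \ref{cor:FSInd} can only serve as a consistency check there rather than an input, as it is deduced from this very theorem.
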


\begin{proof}
Write $(P, \Theta)$ for $(P^{(\hat{\theta},\lambda,\varsigma)},\Theta^{(\hat{\theta},\lambda,\varsigma)})$. We compute
\begin{eqnarray*}
\tr_{\Hom_{\G}(V,P(V))} \, \iota_{\phi}
&=&
\tr_{\Hom_{\G}(V,P(V))} \, (f \mapsto \frac{\lambda(\varsigma)}{\hat{\theta}([\varsigma \vert \varsigma])} f^{\vee} \circ \ev_V \circ \rho(\varsigma^2)^{-1} \circ \phi) \\
&=&
\frac{\lambda(\varsigma)}{\hat{\theta}([\varsigma \vert \varsigma])} \tr_V\,(\rho(\varsigma^2)^{-1} \circ \phi),
\end{eqnarray*}
the second equality following from Lemma \ref{lem:traceTensorDual}. Since $\tr_{\Hom_{\G}(V,P(V))} \, \iota_{\phi}$ is independent of the choice $\varsigma \in \Gh \setminus \G$ used in the definition of $\iota_{\phi}$, we average over all such choices to obtain
\[
\tr_{\Hom_{\G}(V,P(V))} \, \iota_{\phi}
=
\frac{1}{\vert \G \vert} \sum_{\varsigma \in \Gh \backslash \G}\frac{\lambda(\varsigma)}{\hat{\theta}([\varsigma \vert \varsigma])} \tr_V\,(\rho(\varsigma^2)^{-1} \circ \phi).
\]
On the other hand, we have
\begin{eqnarray*}
\langle \bb^V(\phi), \nu \rangle_{\G}
&=&
\frac{1}{\vert \G \vert} \sum_{\varsigma \in \Gh \backslash \G} \frac{\lambda(\varsigma)}{\hat{\theta}([\varsigma \vert \varsigma])} \theta([\varsigma^2 \vert \varsigma^{-2}])^{-1} \tr_V\, \left(\phi \circ \rho_V(\varsigma^{-2}) \right) \\
&=&
\frac{1}{\vert \G \vert} \sum_{\varsigma \in \Gh \backslash \G} \frac{\lambda(\varsigma)}{\hat{\theta}([\varsigma \vert \varsigma])} \tr_V\, \left(\rho_V(\varsigma^{2})^{-1} \circ \phi \right),
\end{eqnarray*}
thereby proving the desired equality.
\end{proof}

Recall that $\delta$ is a cocycle representative of the generator of $H^{2+\pi}(B \Ctwo) \simeq \Ctwo$.

\begin{Cor}
\label{cor:FSInd}
Let $V$ be an irreducible $\theta$-twisted representation of $\G$. Then
\[
\langle \chi_V, \nu_{(\hat{\theta},\lambda)} \rangle_{\G}
=
\begin{cases}
1 & \mbox{if and only if $V$ lifts to a $(\hat{\theta},\lambda)$-twisted Real representation},\\
-1 & \mbox{if and only if $V$ lifts to a $(\delta\hat{\theta},\lambda)$-twisted Real representation}, \\
0 & \mbox{otherwise}.
\end{cases}
\]
When $\langle \chi_V, \nu_{(\hat{\theta},\lambda)} \rangle_{\G} = \pm 1$, the twisted Real structure on $V$ is unique up to isomorphism.
\end{Cor}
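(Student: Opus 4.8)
The plan is to deduce Corollary \ref{cor:FSInd} from Theorem \ref{thm:LefGroupAlg} applied with $\phi = \id_V$, so that $\bb^V(\id_V) = \chi_V$ and the statement becomes an identification of $\tr_{\Hom_{\G}(V,P^{(\hat{\theta},\lambda)}(V))}\,\iota$ for $\iota = \iota_{\id_V}$ the involution of Section \ref{sec:FSIndDetail}. Since $V$ is irreducible, Schur's lemma for $\Rep^{\theta}(\G)$ gives $\Hom_{\G}(V,P^{(\hat{\theta},\lambda)}(V)) \cong \C$ if $V \cong P^{(\hat{\theta},\lambda)}(V)$ in $\Rep^{\theta}(\G)$ and $0$ otherwise. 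In the vanishing case the trace is $0$, which I expect to match the statement: a lift of $V$ to a $(\hat{\theta},\lambda)$- or $(\delta\hat{\theta},\lambda)$-twisted Real representation requires in particular an isomorphism $V \xrightarrow{\sim} P^{(\hat{\theta},\lambda)}(V)$ (the duality functor is the same for $\hat{\theta}$ and $\delta\hat{\theta}$, since $\delta$ acts trivially on the even part), so its absence rules out both lifts.

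The substance is the case $\dim_{\C}\Hom_{\G}(V,P^{(\hat{\theta},\lambda)}(V)) = 1$. Here $\iota$ is a $\C$-linear involution of a one-dimensional space, hence multiplication by $\pm 1$, and $\tr\,\iota = \pm 1$ is exactly that scalar. I would fix an isomorphism $\psi\colon V \xrightarrow{\sim} P^{(\hat{\theta},\lambda)}(V)$ and compute $\iota(\psi) = P^{(\hat{\theta},\lambda)}(\psi)\circ\Theta^{(\hat{\theta},\lambda)}_V = c\,\psi$ for some $c \in \C^\times$; the coherence condition \eqref{eq:catWDualCoher} together with $\iota^2 = \id$ forces $c = \pm 1$ (replacing $\psi$ by $z\psi$ does not change $c$ since $P$ is $\C$-linear on morphisms). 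When $c = 1$, $(V,\psi)$ is by definition a homotopy fixed point of $(P^{(\hat{\theta},\lambda)},\Theta^{(\hat{\theta},\lambda)})$, i.e. an object of $\RRep^{(\hat{\theta},\lambda)}(\G)$, so $V$ lifts. When $c = -1$, I would use the identity $\nu_{(\hat{\theta},\lambda)} = -\nu_{(\delta\hat{\theta},\lambda)}$ recorded after the definition of the twisted Frobenius--Schur element, or equivalently observe directly that $\Theta^{(\delta\hat{\theta},\lambda)}_V = -\Theta^{(\hat{\theta},\lambda)}_V$ because $\delta([\varsigma\vert\varsigma]) = -1$ for $\varsigma \in \Gh\setminus\G$ while the object-level action of $P$ is unchanged; hence the same $\psi$ satisfies $P^{(\delta\hat{\theta},\lambda)}(\psi)\circ\Theta^{(\delta\hat{\theta},\lambda)}_V = -c\,\psi = \psi$, exhibiting a $(\delta\hat{\theta},\lambda)$-twisted Real lift. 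Conversely, the existence of a lift of either type produces such a $\psi$ with the corresponding sign, so the three cases are mutually exclusive and exhaustive, matching the values $1$, $-1$, $0$ of $\langle\chi_V,\nu_{(\hat{\theta},\lambda)}\rangle_{\G}$ given by Theorem \ref{thm:LefGroupAlg}.

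For the final uniqueness claim, suppose $\langle\chi_V,\nu_{(\hat{\theta},\lambda)}\rangle_{\G} = 1$ and let $\psi_1,\psi_2$ be two Real structures on $V$; by Schur $\psi_2 = z\psi_1$ for some $z \in \C^\times$, and I would check that the automorphism $z\cdot\id_V$ of $V$ intertwines $(V,\psi_1)$ and $(V,\psi_2)$ in $\RRep^{(\hat{\theta},\lambda)}(\G)$ — the morphism condition $P^{(\hat{\theta},\lambda)}(z\,\id_V)\circ\psi_2\circ(z\,\id_V) = \psi_1$ reduces to $z\cdot z = 1$ after cancelling, wait, more carefully it reads $z\psi_2 z = \psi_1$, i.e. $z^2\psi_2 = \psi_1$, giving $z^2 = z^{-1}$; this does not immediately force $z = 1$, so instead I would rescale by a chosen square root: replacing $z\,\id_V$ by $w\,\id_V$ with $w^2 = z$ one gets an isomorphism $(V,\psi_1) \cong (V,\psi_2)$, and the $-1$ case is identical. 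The main obstacle is the sign-tracking in the $c = -1$ step: one must be sure that passing from $\hat{\theta}$ to $\delta\hat{\theta}$ alters precisely $\Theta_V$ by the scalar $\delta([\varsigma\vert\varsigma])^{-1} = -1$ and leaves the functor $P^{(\hat{\theta},\lambda,\varsigma)}$ genuinely unchanged on objects and morphisms, which requires checking that $\delta$ contributes trivially to $\uptau^{\refl}_{\pi}(\hat{\theta})([\varsigma]g)$ for $g \in \G$ — a short computation with the explicit formula for $\delta$, since $\delta([g^{-1}\vert g]) = 0$ and $\delta([\varsigma g \varsigma^{-1}\vert\varsigma]) = \delta([\varsigma\vert g]) = 0$ whenever $g \in \G$.
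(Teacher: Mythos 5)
Your proposal is correct and follows the paper's proof essentially verbatim: Schur's lemma plus Theorem \ref{thm:LefGroupAlg} with $\phi=\id_V$, the sign $\pm 1$ read off as the eigenvalue of the involution $\iota$ on the one-dimensional space $\Hom_{\G}(V,P^{(\hat{\theta},\lambda)}(V))$, and the observation that replacing $\hat{\theta}$ by $\delta\hat{\theta}$ flips $\Theta_V$ by $\delta([\varsigma\vert\varsigma])=-1$ while leaving $P$ unchanged (the paper leaves this last point implicit; your check that $\delta$ contributes trivially to $\uptau^{\refl}_{\pi}$ on even elements is the right one). The only nit is in the uniqueness step: for a morphism $(V,\psi_1)\to(V,\psi_2)$ the condition $\psi_1 = P(w\,\id_V)\circ\psi_2\circ(w\,\id_V)$ forces $w^2=z^{-1}$ rather than $w^2=z$ (or take the morphism in the other direction), which of course changes nothing.
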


\begin{proof}
Schur's Lemma for $\Rep^{\theta}(\G)$ implies that $\Hom_G(V, P^{(\hat{\theta},\lambda)}(V)) \simeq \C$ if $P^{(\hat{\theta},\lambda)}(V) \simeq V$ and $\Hom_G(V,P^{(\hat{\theta},\lambda)}(V))=0$ otherwise. Hence, if $\Hom_G(V,P^{(\hat{\theta},\lambda)}(V))=0$, then $V$ does not lift to a Real representation and the statement follows by applying Theorem \ref{thm:LefGroupAlg} with $\phi = \id_V$. If $\Hom_G(V, P^{(\hat{\theta},\lambda)}(V)) \simeq \C$, then a non-zero element $\psi_V \in \Hom_G(V, P^{(\hat{\theta},\lambda)}(V))$ is an isomorphism which, by Theorem \ref{thm:LefGroupAlg}, satisfies
\[
P^{(\hat{\theta},\lambda)}(\psi_V) \circ \Theta_V = \langle \chi_V, \nu_{(\hat{\theta},\lambda)} \rangle_{\G} \cdot \psi_V.
\]
The first statement of the corollary now follows from the homotopy fixed point interpretation of twisted Real representations. Uniqueness of the Real structure up to isomorphism follows from one dimensionality of $\Hom_G(V, P^{(\hat{\theta},\lambda)}(V))$. 
\end{proof}

In the setting of Corollary \ref{cor:FSInd}, if $\langle \chi_V, \nu_{(\hat{\theta},\lambda)} \rangle_{\G} =0$, then $V$ determines an irreducible $(\hat{\theta},\lambda)$-twisted Real representation by $H^{(\hat{\theta},\lambda)}(V) = V \oplus P^{(\hat{\theta},\lambda)}(V)$ with its hyperbolic homotopy fixed point structure \cite[\S 7.3]{mbyoung2021b}. Note that $H^{(\hat{\theta},\lambda)}(V) \simeq H^{(\hat{\theta},\lambda)}(P^{(\hat{\theta},\lambda)}(V))$.

\begin{Cor}
\label{cor:finiteRealIrreps}
There are finitely many isomorphism classes of irreducible $(\hat{\theta},\lambda)$-twisted Real representations.
\end{Cor}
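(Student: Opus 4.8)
The plan is to reduce the finiteness statement for irreducible $(\hat\theta,\lambda)$-twisted Real representations to the already-known finiteness of the set $\Irr(\Rep^\theta(\G))$ of isomorphism classes of irreducible $\theta$-twisted representations of $\G$, using the restriction functor and Proposition \ref{prop:RealRestr}. First I would recall that $\Rep^\theta(\G)$ is a finite semisimple category, so $\Irr(\Rep^\theta(\G))$ is a finite set; this is the only external input needed. Next, given an irreducible $V \in \RRep^{(\hat\theta,\lambda)}(\G)$, Proposition \ref{prop:RealRestr} tells us that its restriction $V_{\vert \G}$ is either an irreducible $U \in \Rep^\theta(\G)$ or a sum $U \oplus P^{(\hat\theta,\lambda)}(U)$ with $U$ irreducible. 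In either case the isomorphism class of $V_{\vert \G}$ is determined by an (unordered) choice from the finite set $\Irr(\Rep^\theta(\G))$ — a single class in the first case, the pair $\{[U],[P^{(\hat\theta,\lambda)}(U)]\}$ in the second.

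The heart of the argument is then to show that each isomorphism class of $\theta$-twisted representation occurs as the restriction of only finitely many — in fact at most a bounded number of — isomorphism classes of irreducible Real representations. For this I would use the bilinear-form interpretation of Proposition \ref{prop:RealRepBilinearForm}: an irreducible Real structure on a fixed $(V,\rho)$ amounts to a non-degenerate bilinear form on $V$ satisfying the twisted $\G$-invariance and twisted symmetry conditions, up to the natural notion of equivalence. When $V_{\vert \G}$ is irreducible of type $U$, the space of such forms is at most one-dimensional by Schur's Lemma (this is exactly the $\Hom_\G(V,P^{(\hat\theta,\lambda)}(V)) \simeq \C$ computation used in the proof of Corollary \ref{cor:FSInd}), so the Real structure, if it exists, is unique up to isomorphism — indeed Corollary \ref{cor:FSInd} already records this. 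When $V_{\vert\G} \simeq U \oplus P^{(\hat\theta,\lambda)}(U)$ with $U \not\simeq P^{(\hat\theta,\lambda)}(U)$, the hyperbolic construction $H^{(\hat\theta,\lambda)}(U)$ of the remark following Corollary \ref{cor:FSInd} supplies one Real structure, and an argument with $\Hom_\G$-spaces between $U$ and $P^{(\hat\theta,\lambda)}(U)$ (again governed by Schur's Lemma) shows it is the unique irreducible one with that restriction. Thus the assignment $V \mapsto V_{\vert\G}$ gives a finite-to-one map from isomorphism classes of irreducible Real representations into the finite set of isomorphism classes of $\theta$-twisted representations of the form $U$ or $U \oplus P^{(\hat\theta,\lambda)}(U)$, and finiteness follows.

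The main obstacle I anticipate is the bookkeeping in the $U \oplus P^{(\hat\theta,\lambda)}(U)$ case: one must check that two irreducible Real structures with this common restriction are isomorphic as Real representations, which requires unwinding the homotopy-fixed-point conditions and using that the only $\G$-equivariant maps in play are scalars on the $U$- and $P^{(\hat\theta,\lambda)}(U)$-isotypic pieces. This is essentially a Schur-Lemma computation in the spirit of the proof of Corollary \ref{cor:FSInd}, but it needs to be done carefully enough to rule out a continuous family of inequivalent hyperbolic structures. An alternative, cleaner route — which I would mention as a remark — is to bypass Proposition \ref{prop:RealRestr} entirely and instead note that any irreducible Real representation $V$ of $\G$ is a direct summand of the "regular" Real representation built from $\C^\theta[\G]$ with its canonical Real structure coming from $\nu_{(\hat\theta,\lambda)}$, which is finite-dimensional; finitely many isomorphism classes of indecomposable summands of a fixed finite-dimensional object then gives the result immediately via a Krull--Schmidt-type argument in the additive category $\RRep^{(\hat\theta,\lambda)}(\G)$.
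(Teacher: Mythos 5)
Your proposal is correct and follows essentially the same route as the paper: the paper's proof is exactly the combination of Proposition \ref{prop:RealRestr}, finiteness of $\Irr^{\theta}(\G)$, and the uniqueness statement at the end of Corollary \ref{cor:FSInd}, i.e.\ restriction to $\G$ is finite-to-one on isomorphism classes. The extra Schur-lemma bookkeeping you flag in the $U \oplus P^{(\hat{\theta},\lambda)}(U)$ case does go through (the relevant $\Hom$-spaces are one-dimensional and automorphisms of $U \oplus P^{(\hat{\theta},\lambda)}(U)$ act transitively on the admissible homotopy fixed point structures), which is the detail the paper leaves implicit.
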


\begin{proof}
This follows from Proposition \ref{prop:RealRestr}, finiteness of isomorphism classes of irreducible $\theta$-twisted representations and the final statement of Corollary \ref{cor:FSInd}.
\end{proof}

\begin{Cor}
\label{cor:FSIndDecomp}
There is an equality
\[
\nu_{(\hat{\theta},\lambda)}
=
\sum_{\substack{ V \in \Irr^{\theta}(\G) \\ P^{(\hat{\theta},\lambda)}(V) \simeq V}} \langle \chi_V, \nu_{(\hat{\theta},\lambda)} \rangle_G \chi_V.
\]
\end{Cor}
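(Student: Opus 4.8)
The plan is to expand $\nu_{(\hat\theta,\lambda)}$ in the orthonormal basis of $Z(\C^{\theta^{-1}}[\G])$ given by the characters of irreducible $\theta$-twisted representations, and then identify the coefficients using Corollary \ref{cor:FSInd}. First I would recall that, by Lemma \ref{lem:FSClassFun}, $\nu_{(\hat\theta,\lambda)}$ lies in $Z(\C^{\theta^{-1}}[\G])$, which is exactly the space of $\theta$-twisted class functions. Since the characters $\{\chi_V\}_{V \in \Irr^\theta(\G)}$ form an orthonormal basis of this space with respect to $\langle -,-\rangle_\G$, we may write
\[
\nu_{(\hat\theta,\lambda)}
=
\sum_{V \in \Irr^\theta(\G)} \langle \chi_V, \nu_{(\hat\theta,\lambda)} \rangle_\G \, \chi_V,
\]
where I am using that $\langle-,-\rangle_\G$ pairs $Z(\C^{\theta^{-1}}[\G])$ with itself (the bilinear, not Hermitian, convention used throughout the paper) so that the expansion coefficient of $\chi_V$ is precisely $\langle \chi_V, \nu_{(\hat\theta,\lambda)}\rangle_\G$.

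The remaining point is to restrict the sum to those $V$ with $P^{(\hat\theta,\lambda)}(V) \simeq V$. By Schur's Lemma for $\Rep^\theta(\G)$, as invoked in the proof of Corollary \ref{cor:FSInd}, if $P^{(\hat\theta,\lambda)}(V) \not\simeq V$ then $\Hom_\G(V, P^{(\hat\theta,\lambda)}(V)) = 0$, and applying Theorem \ref{thm:LefGroupAlg} with $\phi = \id_V$ gives $\langle \chi_V, \nu_{(\hat\theta,\lambda)}\rangle_\G = \tr_{\Hom_\G(V,P^{(\hat\theta,\lambda)}(V))}\iota = 0$. Hence every term with $P^{(\hat\theta,\lambda)}(V) \not\simeq V$ drops out of the sum, leaving exactly the claimed formula. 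I would phrase this as a one- or two-sentence argument since the substantive content is entirely contained in Lemma \ref{lem:FSClassFun}, the orthonormality of twisted irreducible characters, and Theorem \ref{thm:LefGroupAlg}/Corollary \ref{cor:FSInd}.

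There is no real obstacle here; the only mild subtlety worth a word of care is making sure the pairing used for the basis expansion is the same $\langle-,-\rangle_\G = \langle-,-\rangle_{\G,\theta^{-1}}$ appearing in Theorem \ref{thm:LefGroupAlg}, and that it is the symmetric bilinear form (so no complex conjugation enters), which is already the paper's standing convention. One could alternatively avoid mentioning the expansion altogether and simply observe that both sides are $\theta$-twisted class functions with the same inner product against every $\chi_V$, hence equal; I would likely present it in that form for brevity.
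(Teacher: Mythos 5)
Your proposal is correct and follows essentially the same route as the paper: the paper's proof likewise combines the orthonormality of the irreducible $\theta$-twisted characters in the space of $\theta$-twisted class functions with the vanishing of $\langle \chi_V, \nu_{(\hat{\theta},\lambda)} \rangle_{\G}$ when $P^{(\hat{\theta},\lambda)}(V) \not\simeq V$, as supplied by Corollary \ref{cor:FSInd} (via Theorem \ref{thm:LefGroupAlg} with $\phi = \id_V$). Your extra remarks about the bilinear (non-Hermitian) convention and the equivalent "pair against every $\chi_V$" phrasing are fine but add nothing beyond the paper's argument.
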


\begin{proof}
This follows from the second part of Corollary \ref{cor:FSInd} and the fact that the set $\{\chi_V\}_{V \in \Irr^{\theta}(\G)}$ of irreducible $\theta$-twisted characters is an orthonormal basis of the space of $\theta$-twisted class function on $\G$.
\end{proof}

Various instances of the element $\nu_{(\hat{\theta},\lambda)}$ and Corollary \ref{cor:FSInd} are known:
\begin{enumerate}
\item The classical setting of Frobenius and Schur \cite{frobenius1906} corresponds to taking $\Gh = \G \times \Ctwo$ with $\pi$ the projection to the second factor and the cohomological data $\hat{\theta}$ and $\lambda$ trivial. The conditions on the bilinear form $\langle -, - \rangle$ from Proposition \ref{prop:RealRepBilinearForm} reduce to $\G$-invariance and symmetry; if $\hat{\theta}=\delta$, then $\langle -, - \rangle$ is skew-symmetric. Corollary \ref{cor:FSInd} then gives the standard necessary and sufficient condition for $V$ to admit a $\G$-invariant bilinear form and so be defined over $\R$ (in the symmetric case) or $\quat$ (in the skew-symmetric case).

\item Taking $\hat{\theta}$ and $\lambda$ to be trivial recovers Gow's generalized Frobenius--Schur element used in the character theoretic study of $2$-regularity of finite groups \cite[\S 2]{gow1979}. For representation theoretic applications, see \cite{rumynin2021}.

\item Take $\Gh = \G \times \Ctwo$. In this case, there is a homomorphism $\G \rightarrow \Gh$ which splits $\pi$. When $\hat{\theta}$ is in the image of the resulting map $H^2(B \G ; \Ctwo) \rightarrow H^{2+\pi}(B \Gh)$ and $\lambda$ is trivial, $\nu_{(\hat{\theta},1)}$ recovers Turaev's generalized Frobenius--Schur element studied in the context of closed unoriented TFT \cite{turaev2007}. See \cite{ichikawa2023} for a generalization in the setting of closed $\Pin_2^-$ TFT.

\item When $\phi=\id_V$ the trace of Theorem \ref{thm:LefGroupAlg} is an instance of a Shimizu's Frobenius--Schur indicator in a category with duality \cite{shimizu2012}.
\end{enumerate}

\begin{Ex}
\label{ex:cyclicGroups}
Let $\G = C_n$ with generator $r$ and $\zeta= e^{\frac{2\pi \sqrt{-1}}{n}}$. The one dimensional representations $\{\rho_k \mid 0 \leq k \leq n-1\}$, defined by $\rho_k(r)=\zeta^k$, constitute a complete set of irreducible representations of $\G$. Take $\hat{\theta}$ and $\lambda$ to be trivial in this example.
\begin{enumerate}
\item \label{ite:trivRealStr} Let $\Gh = C_n \times \Ctwo$ with $\pi$ projection to the second factor. We have 
\[
\langle \chi_k, \nu \rangle_{\G} 
= \begin{cases}
1 & \mbox{if $k=0$ or $k=\frac{n}{2}$}, \\
0 & \text{otherwise},
\end{cases}
\]
whence the trivial and sign representation (which exists when $n$ is even) admit Real structures. These are precisely the irreducible representations which are defined over $\R$.

\item \label{ite:cyclicRealStr} Let $\Gh = C_{2n}$ with generator $\varsigma$ satisfying $\varsigma^2=r$ and $\Ctwo$-grading $\pi:\Gh \rightarrow \Ctwo$ determined by $\pi(\varsigma)=-1$. Assume that $n$ is even, as otherwise $\Gh \simeq C_n \times \Ctwo$ as $\Ctwo$-graded groups. We have $\nu = 2 \sum_{j=0}^{\frac{n}{2}-1} l_{r^{2j}}$ from which we compute
\[
\langle \chi_k, \nu \rangle_{\G}
=
\frac{2}{n} \sum_{j=0}^{\frac{n}{2}-1} \zeta^{2kj} \\
=
\begin{cases}
1 & \mbox{if } k=0, \\
-1 &  \mbox{if } k=\frac{n}{2}, \\
0 & \mbox{otherwise}.
\end{cases}
\]
The Real structure on $\rho_0$ is given by $\rho_{0}(\varsigma)(1^{\vee}) = 1$. The same formula gives the $\delta$-twisted Real structure on $\rho_{\frac{n}{2}}$.

\item  \label{ite:dihedralRealStr} Let $\Gh$ be the dihedral group $D_{2n} = \langle r,s \mid r^n=s^2=e, \, srs=r^{-1} \rangle$ with $\pi: \Gh \rightarrow \Ctwo$ determined by $\pi(r)=1$ and $\pi(s)=-1$.
We have $\nu = nl_{e}$ from which we compute $\langle \chi_k, \nu \rangle_{\G} =1$. Each irreducible representation $\rho_k$ can therefore be extended to a Real representation by the formula $\rho_k(s)(1^{\vee})=1$.\qedhere
\end{enumerate}
\end{Ex}

\begin{Ex}
Let $\Gh=Q_8$ be the quaternion group with $\Ctwo$-grading given on the standard generators by $\pi(i)=1$ and $\pi(j)=-1$. Then $G \simeq C_4$ is generated by $i$. We have $\nu = 4 l_{-1}$ so that $\langle \chi_k, \nu \rangle = (-1)^k$. The Real structure on $\rho_k$, which is $\delta$-twisted precisely when $k$ is even, is determined by $\rho_k(j)(1^{\vee}) = 1$.
\end{Ex}

\begin{Ex}
Let $\G =A_4$ be the alternating group on $4$ letters. The irreducible representations of $\G$ are the trivial representation $U$, two non-trivial one dimensional representations $U^{\prime}$ and $U^{\prime \prime}$ and a three dimensional representation $V$. Writing $\zeta= e^{\frac{2\pi \sqrt{-1}}{3}}$, we take the convention that their characters are
\begin{align*}
\chi_{U^{\prime}}(123) & = \zeta, & \chi_{U^{\prime}}(132) &= \zeta^2, &
\chi_{U^{\prime}}((12)(34))& =1, \\
\chi_{U^{\prime \prime}}(123) & = \zeta^2, & \chi_{U^{\prime \prime}}(132) &= \zeta, & \chi_{U^{\prime\prime}}((12)(34)) & =1, \\
\chi_V(123) & =0, & \chi_V(132) & =0, & \chi_V((12)(34)) & = -1.
\end{align*}
\begin{enumerate}
\item Taking $\Gh = A_4 \times \Ctwo$ with $\pi$ the projection to the second factor gives $\nu= 4 l_{(1)} + \sum_{3\mbox{\tiny-cycles} \; \sigma} l_{\sigma}$. Using this, we compute
\[
\langle \chi_U,\nu \rangle = 1,
\qquad
\langle \chi_{U^{\prime}},\nu \rangle=0,
\qquad
\langle \chi_{U^{\prime \prime}},\nu \rangle=0,
\qquad
\langle \chi_V,\nu \rangle= 1.
\]
Hence, only $U$ and $V$ admit real structures.

\item Taking $\Gh = S_4$ the symmetric group with $\pi$ the sign representation gives $\nu = 6 l_{(1)} +2 (l_{(12)(34)} + l_{(13)(24)} + l_{(14)(23)})$. Using this, we compute
\[
\langle \chi_U,\nu \rangle = 1,
\qquad
\langle \chi_{U^{\prime}},\nu \rangle=1,
\qquad
\langle \chi_{U^{\prime \prime}},\nu \rangle=1,
\qquad
\langle \chi_V,\nu \rangle= 0.
\]
Hence, all one dimensional representations admit Real structures. Taking $\lambda$ to be non-trivial, that is, $\lambda=\pi$, replaces $\nu$ with its negative and leads to $\delta$-twisted Real structures on the one dimensional representations.\qedhere
\end{enumerate}
\end{Ex}

\section{Two dimensional unoriented open/closed topological field theory}
\label{sec:TFT}

\subsection{Algebraic characterization}

Following Lazaroiu \cite{lazaroiu2001} and Moore and Segal \cite{moore2006}, we begin by recalling an algebraic characterization of two dimensional oriented open/closed topological field theories (TFTs). See also \cite{alexeevski2006,lauda2008}. In topological terms, such a TFT is a symmetric monoidal functor $\TFT: \Bord_2^{\ori,D} \rightarrow \Vect_{\C}$. Here $\Bord_2^{\ori,D}$ two dimensional open/closed bordism category \cite[\S 3]{lauda2008}. Objects are compact oriented $1$-manifolds with boundary components labelled by elements of a given set $D$. Morphisms are isomorphism classes of oriented bordisms with corners whose free boundaries are $D$-labelled compatibly with the incoming and outgoing boundaries. The monoidal structure of $\Bord_2^{\ori,D}$ is disjoint union.

\begin{Thm}[{\cite[Theorem 1]{moore2006}}]
\label{thm:ocOriTFT}
Two dimensional oriented open/closed TFTs are classified by the following data:
\begin{enumerate}
\item A commutative Frobenius algebra $A$ with identity $1_A$ and trace $\langle- \rangle_{0}: A \rightarrow \C$.
\item A  Calabi--Yau category $\Brane$, that is, $\C$-linear additive category with cyclic traces $\langle-\rangle_V: \Hom_{\Brane}(V,V) \rightarrow \C$, $V \in \Brane$, whose associated pairings 
\[
\langle -, - \rangle_{V,W}:
\Hom_{\Brane}(W,V) \otimes \Hom_{\Brane}(V,W) \xrightarrow[]{\circ} \Hom_{\Brane}(V,V) \xrightarrow[]{\langle - \rangle_V} \C
\]
are non-degenerate.

\item For each $V \in \Brane$, a linear \emph{boundary-bulk} map $\bb^V : \Hom_{\Brane}(V,V) \rightarrow A$ 
and linear \emph{bulk-boundary} map $\bb_V: A \rightarrow \Hom_{\Brane}(V,V)$.
\end{enumerate}
This data is required to satisfy the following conditions:
\begin{enumerate}[label=(\roman*)]
\item $\bb_V$ is a unital algebra homomorphism.


\item $\bb_W(a) \circ \phi = \phi \circ \bb_V(a)$ for all $a \in A$ and $\phi \in \Hom_{\Brane}(V,W)$.

\item $\langle \phi, \bb_V (a) \rangle_{V,V} = \langle \bb^V(\phi), a \rangle_0$ for all $a \in A$ and $\phi \in \Hom_{\Brane}(V,V)$.

\item (The \emph{oriented Cardy condition}) Let $\{\psi_i\}_i$ be a basis of $\Hom_{\Brane}(V,W)$ and $\{\psi^i\}_i$ the basis of $\Hom_{\Brane}(W,V)$ which is dual with respect to $\langle-,-\rangle_{V,W}$. Then $\bb_V \circ \bb^W$ is equal to the map
\[
\Hom_{\Brane}(W,W) \rightarrow \Hom_{\Brane}(V,V), \qquad \phi \mapsto \sum_i \psi^i \circ \phi \circ \psi_i.
\]
\end{enumerate}
\end{Thm}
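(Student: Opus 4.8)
The plan is to establish the two directions of the classification separately: given a symmetric monoidal functor $\TFT: \Bord_2^{\ori,D} \to \Vect_{\C}$ I would extract the listed data, and conversely from such data I would build a functor, the two constructions being visibly mutually inverse.

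For the first direction, restrict $\TFT$ to the full subcategory of $\Bord_2^{\ori,D}$ spanned by disjoint unions of circles. This is the closed oriented bordism category, and the classical two-dimensional argument applies: the pair of pants gives a multiplication on $A := \TFT(S^1)$, the two discs give a unit and a trace $\langle-\rangle_0$, the cylinder is the identity, and the standard relations between these bordisms make $A$ a commutative Frobenius algebra. For the open sector I would take the objects of $\Brane$ to be the labels in $D$ (passing to the idempotent completion, as explained at the end), set $\Hom_{\Brane}(V,W) = \TFT(I_{W,V})$ for $I_{W,V}$ the oriented interval with free boundary labelled at its two ends by $V$ and $W$, use concatenation of intervals for composition, and take $\langle-\rangle_V$ to be $\TFT$ of the ``open cap'', the disc with a single free arc on its boundary. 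The two elbow bordisms give the zig-zag identities, whence non-degeneracy of $\langle-,-\rangle_{V,W}$, so $\Brane$ is Calabi--Yau. Finally $\bb^V$ is $\TFT$ of the bordism from $I_{V,V}$ to $S^1$ that zips the interval shut, and $\bb_V$ is $\TFT$ of its orientation reversal.

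Next I would recognise conditions (i)--(iv) as the images under $\TFT$ of a short list of diffeomorphisms of surfaces with corners: (i) is the identity expressing that merging two bulk inputs along a strip is associative, commutative and unital; (ii) is the isotopy sliding a bulk insertion along a strip past a free-boundary insertion; (iii) says that the bordism computing $\langle \phi, \bb_V(a)\rangle_{V,V}$ and the one computing $\langle \bb^V(\phi), a\rangle_0$ are diffeomorphic, differing by a flip; and (iv), the Cardy condition, is the evaluation of the annulus with one circle boundary and one free interval boundary, computed either by factoring through the circle (yielding $\bb_V \circ \bb^W$) or by cutting along an arc and inserting a sum over dual bases of $\Hom_{\Brane}(V,W)$. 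Drawing the relevant bordisms makes each of these an elementary check.

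For the converse, I would define $\TFT$ on a generating set of $\Bord_2^{\ori,D}$ --- the caps, cups, pairs of pants and cylinders of the closed theory, the open strips, open caps and cups, and the boundary-bulk and bulk-boundary bordisms --- by the tautological maps, and extend by monoidality and composition. The real work is well-definedness, i.e.\ independence of the chosen decomposition of a bordism into generators. Here I would use a generators-and-relations presentation of the open/closed bordism category: every compact oriented surface with corners has a Morse/handle decomposition, any two such are related by a finite list of elementary moves, and this list is worked out in detail in \cite{lauda2008}. One then checks move by move that every relation follows from the Frobenius axioms for $A$, the Calabi--Yau axioms for $\Brane$, and conditions (i)--(iv). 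That verification, together with pinning down the presentation of $\Bord_2^{\ori,D}$ precisely enough that the move list is provably complete, is the main obstacle; the rest is bookkeeping. The passage to the idempotent completion of $\Brane$ is what upgrades the correspondence from a surjection to a bijection: condition (iv) makes each $\Hom_{\Brane}(V,V)$ a module over $A$ through $\bb_V$, and one needs all direct summands of the objects $V$ present in order to realise arbitrary such module structures.
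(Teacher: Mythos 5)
This theorem is not proved in the paper at all: it is quoted from Moore--Segal \cite{moore2006}, with the detailed generators-and-relations proof available in \cite{lauda2008}, and your outline (extracting the Frobenius algebra and Calabi--Yau category from the functor, then proving well-definedness of the converse via a handle-decomposition/moves presentation of $\Bord_2^{\ori,D}$) is exactly that standard argument. So your proposal is consistent with the route the paper implicitly relies on; the only substantive step you leave open, completeness of the relation list, is precisely what \cite{lauda2008} supplies.
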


\begin{Rems}
\begin{enumerate}

\item When $\Brane$ has a single object, the algebraic data of Theorem \ref{thm:ocOriTFT} is called a \emph{Cardy--Frobenius} or \emph{knowledgeable} Frobenius algebra \cite{alexeevski2006, lauda2008}.

\item Let $\TFT$ be an oriented open/closed TFT with object set $D$. The category\footnote{Since $\Brane$ is assumed to be additive, it may be required to formally add some elements to $D$ to ensure the existence of direct sums. See \cite[\S 2.5]{moore2006}.} $\Brane$ has objects $D$, morphisms $\Hom_{\Brane}(V,W)$ given by the value of $\TFT$ on the closed interval labelled by $V$ and $W$ and oriented from $V$ to $W$ and composition defined by the value of $\TFT$ on the flattened pair of pants. The value of $\TFT$ on the flattened cap defines the Calabi--Yau traces.

\item By non-degeneracy of the Calabi--Yau pairings, the oriented Cardy condition holds if and only if
\begin{equation}
\label{eq:baggyCardy}
\tr_{\Hom_{\Brane}(V,W)} \, (f \mapsto \psi \circ f \circ \phi) = \langle \bb^W(\psi), \bb^V(\phi) \rangle_0
\end{equation}
for all $\phi \in \Hom_{\Brane}(V,V)$ and $\psi \in \Hom_{\Brane}(W,W)$. Following \cite[\S 7.4]{caldararu2010}, we refer to equation \eqref{eq:baggyCardy} as the \emph{baggy oriented Cardy condition}. Topologically, the oriented Cardy condition asserts the equality of two ways of evaluating the TFT on the annulus with boundary components labelled by $V$ and $W$.
\end{enumerate}
\end{Rems}

We are interested in the extension of Theorem \ref{thm:ocOriTFT} to the unoriented bordism category $\Bord_2^D$, defined analogously to $\Bord_2^{\ori,D}$ except that objects and morphisms are unoriented. Upon restriction to the closed sector, the extension is known.

\begin{Thm}[{\cite[Proposition 2.9]{turaev2006}}]
\label{thm:cUnoriTFT}
Two dimensional unoriented TFTs are classified by the data of an \emph{unoriented Frobenius algebra}, that is, a commutative Frobenius algebra $(A,1_A, \langle - \rangle_0)$ with an isometric algebra involution $p: A \rightarrow A$ and an element $Q \in A$, the \emph{crosscap state}, which satisfy the following conditions:
\begin{enumerate}[label=(\roman*)]
\item $p(Q a) = Q a$ for all $a \in A$.

\item (\emph{The Klein condition}) Given a basis $\{a_i\}_i$ of $A$ with basis $\{a^i\}_i$ of $A$ dual with respect to $\langle - \rangle_0$, the equality $Q^2 = \sum_i p(a^i ) a_i$ holds.
\end{enumerate}
\end{Thm}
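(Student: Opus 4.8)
The plan is to bootstrap from the oriented classification. A two dimensional unoriented TFT is a symmetric monoidal functor $\TFT\colon \Bord_2 \to \Vect_{\C}$, and restricting it along the inclusion $\Bord_2^{\ori} \hookrightarrow \Bord_2$ of the oriented bordism category produces an oriented closed TFT; the closed-sector specialization of Theorem \ref{thm:ocOriTFT} then makes $A := \TFT(S^1)$ a commutative Frobenius algebra, with multiplication $m$, unit $\eta$, comultiplication $\Delta$ and counit $\langle - \rangle_0$. So I would split the argument into (a) reading off $p$ and $Q$ from $\TFT$ and verifying (i)--(ii), and (b) conversely producing a functor on $\Bord_2$ from any triple $(A,p,Q)$ satisfying (i)--(ii), these two passages being mutually inverse.

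For (a): let $\sigma\colon S^1\to S^1$ be the orientation-reversing ``flip'' cylinder---underlying surface $S^1\times[0,1]$, with outgoing boundary reparametrised by a reflection---and set $p := \TFT(\sigma)$. One checks $\sigma\circ\sigma = \id_{S^1}$ in $\Bord_2$ (so $p^2=\id$), that gluing $\sigma$ to the cap disk and to the unit disk returns the cap and the unit disk (so $\langle-\rangle_0\circ p = \langle-\rangle_0$ and $p(1_A)=1_A$), and that sliding a reflection across a pair of pants, together with commutativity of $m$, gives $p\circ m = m\circ(p\otimes p)$; hence $p$ is an isometric algebra involution. Writing $\mathcal{M}\colon\emptyset\to S^1$ for the M\"obius band bordism, set $Q := \TFT(\mathcal{M})(1)\in A$. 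Condition (i) I would deduce from the bordism identity $\sigma\circ B = B$, where $B = \mathbb{RP}^2\setminus(D^2\sqcup D^2)\colon S^1\to S^1$ is the M\"obius band with a hole and the reflection on the outgoing circle is absorbed by passing it through the crosscap; since $B = m\circ(\mathcal{M}\sqcup\id_{S^1})$, applying $\TFT$ turns this identity into the statement that the operators $a\mapsto Qa$ and $a\mapsto p(Qa)$ agree. Condition (ii) I would get from the two ways of cutting the Klein bottle minus a disk, $N_2\setminus D^2\colon\emptyset\to S^1$, into elementary bordisms: as $\mathcal{M}\sqcup\mathcal{M}$ followed by $m$, which evaluates to $Q^2$; and as $\Delta\circ\eta$ followed by $\sigma$ on one leg and then $m$, which evaluates to $\sum_i p(a^i)a_i$ because $\Delta(1_A) = \sum_i a^i\otimes a_i$.

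For (b): I would invoke a generators-and-relations presentation of $\Bord_2$. Beyond the generators and relations of $\Bord_2^{\ori}$, the unoriented bordism category is generated by $\sigma$ and $\mathcal{M}$ (a ``crosscap''), subject to the relations that $\sigma$ is an involution, that $\sigma$ is compatible with the oriented generators, the relation $\sigma\circ B = B$ underlying (i), and the Klein relation underlying (ii). Given $(A,p,Q)$ satisfying (i)--(ii), I would define $\TFT$ on generators by the oriented TFT of Theorem \ref{thm:ocOriTFT} together with $\sigma\mapsto p$ and $\mathcal{M}\mapsto Q$; each defining relation of $\Bord_2$ is then sent to a valid identity in $\Vect_{\C}$ precisely because of the hypotheses on $(A,p,Q)$, so $\TFT$ descends to a symmetric monoidal functor, and the data extracted from it is again $(A,p,Q)$ by construction.

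The main obstacle will be the topological input underlying (b): that every compact, possibly non-orientable, surface-with-corners decomposes, under composition and disjoint union, into the listed elementary bordisms, and that the listed relations generate all relations. This is where the real work lies; it rests on the classification of non-orientable surfaces together with a Cerf-theory / handle-slide analysis in the non-orientable cornered setting---the analogue of the argument behind the oriented case but with the extra crosscap handle and the moves it generates, in particular the ``reflection through a crosscap'' move underlying (i) and the Klein move. Everything downstream of that presentation is routine diagram-chasing with the Frobenius structure.
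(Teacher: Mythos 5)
Your approach is essentially the paper's: the statement is not proved in the paper but imported from \cite{turaev2006}, and your sketch reproduces that reference's argument---extracting $p$ from the reflection cylinder and $Q$ from the M\"{o}bius strip, deriving condition (i) from the punctured M\"{o}bius band identity and (ii) from the two decompositions of the Klein bottle minus a disk, and handling the converse through a generators-and-relations presentation of the unoriented bordism category. The presentation of that bordism category, which you correctly flag as the real topological work, is exactly the content supplied by Turaev--Turner, so your outline matches the cited proof rather than offering a genuinely different route.
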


In terms of bordisms, $Q$ is the image under $\TFT$ of the compact M\"{o}bius strip $\mathbb{RP}^2 \setminus \mathring{D}^2$,
\[
\begin{tikzpicture}[very thick,scale=2.5,color=black,baseline=0.75cm]
\coordinate (q1) at (-0.4,0.175);
\coordinate (q2) at (-0.4,0.525);
\coordinate (x1) at (-0.63,0.375);
\coordinate (x2) at (-0.58,0.325);
\coordinate (y1) at (-0.63,0.325);
\coordinate (y2) at (-0.58,0.375);
\draw[very thick] (q1) .. controls +(-0.4,0) and +(-0.4,0) ..  (q2); 
\draw[very thick, blue!80!black,decoration={markings, mark=at position 0.5 with {\arrow{>}}}, postaction={decorate}] (q1) .. controls +(0.15,0) and +(0.15,0) ..  (q2); 
\draw[very thick, blue!80!black, opacity=0.2] (q1) .. controls +(-0.15,0) and +(-0.15,0) ..  (q2); 
\draw[thick] (x1) to (x2);
\draw[thick] (y1) to (y2);
\draw[thick] (-0.605,0.35) circle (0.05cm);
\end{tikzpicture}
: \varnothing \rightarrow S^1,
\]
and $p$ is the image of the mapping cylinder of circle reflection,
\[
\begin{tikzpicture}[very thick,scale=2.5,color=black,baseline]
\coordinate (r1) at (0.1,0.15);
\coordinate (r2) at (0.1,-0.15);
\coordinate (r3) at (0.8,0.15);
\coordinate (r4) at (0.8,-0.15);
\draw (r1) to (r3);
\draw (r2) to (r4);
\draw[very thick, blue!80!black,decoration={markings, mark=at position 0.5 with {\arrow{<}}}, postaction={decorate}] (r1) .. controls +(0.15,0) and +(0.15,0) ..  (r2); 
\draw[very thick, blue!80!black] (r1) .. controls +(-0.15,0) and +(-0.15,0) ..  (r2); 
\draw[very thick, blue!80!black,decoration={markings, mark=at position 0.5 with {\arrow{>}}}, postaction={decorate}] (r3) .. controls +(0.15,0) and +(0.15,0) ..  (r4); 
\draw[very thick, blue!80!black,opacity=0.2] (r3) .. controls +(-0.15,0) and +(-0.15,0) ..  (r4);
\end{tikzpicture}
: S^1 \rightarrow S^1.
\]
The Klein condition is illustrated in Figure \ref{fig:KleinCond}.

\begin{figure}
\begin{tikzpicture}[very thick,scale=2.01,baseline=-0.10cm,color=black]
\coordinate (q1) at (-0.4,0.175);
\coordinate (q2) at (-0.4,0.525);
\coordinate (x1) at (-0.63,0.375);
\coordinate (x2) at (-0.58,0.325);
\coordinate (y1) at (-0.63,0.325);
\coordinate (y2) at (-0.58,0.375);
\draw[very thick] (q1) .. controls +(-0.4,0) and +(-0.4,0) ..  (q2); 
\draw[very thick, blue!80!black,decoration={markings, mark=at position 0.5 with {\arrow{>}}}, postaction={decorate}] (q1) .. controls +(0.15,0) and +(0.15,0) ..  (q2); 
\draw[very thick, blue!80!black, opacity=0.2] (q1) .. controls +(-0.15,0) and +(-0.15,0) ..  (q2); 
\draw[thick] (x1) to (x2);
\draw[thick] (y1) to (y2);
\draw[thick] (-0.605,0.35) circle (0.05cm);
\coordinate (p1) at (0,-0.575);
\coordinate (p2) at (0,-0.225);
\coordinate (p3) at (0,0.175);
\coordinate (p4) at (0,0.525);
\coordinate (p5) at (0.8,0.15);
\coordinate (p6) at (0.8,-0.15);
\draw (p2) .. controls +(0.35,0) and +(0.35,0) ..  (p3); 
\draw (p4) .. controls +(0.5,0) and +(-0.5,0) ..  (p5); 
\draw (p6) .. controls +(-0.5,0) and +(0.5,0) ..  (p1); 
\draw[very thick, blue!80!black,decoration={markings, mark=at position 0.5 with {\arrow{>}}}, postaction={decorate}] (p1) .. controls +(0.15,0) and +(0.15,0) ..  (p2); 
\draw[very thick, blue!80!black] (p1) .. controls +(-0.15,0) and +(-0.15,0) ..  (p2); 
\draw[very thick, blue!80!black,decoration={markings, mark=at position 0.5 with {\arrow{>}}}, postaction={decorate}] (p3) .. controls +(0.15,0) and +(0.15,0) ..  (p4); 
\draw[very thick, blue!80!black] (p3) .. controls +(-0.15,0) and +(-0.15,0) ..  (p4); 
\draw[very thick, blue!80!black,decoration={markings, mark=at position 0.5 with {\arrow{<}}}, postaction={decorate}] (p5) .. controls +(0.15,0) and +(0.15,0) ..  (p6); 
\draw[very thick, blue!80!black,opacity=0.2] (p5) .. controls +(-0.15,0) and +(-0.15,0) ..  (p6);
\end{tikzpicture}
\;\;
=
\;\;
\begin{tikzpicture}[very thick,scale=1.75,color=black,baseline]
\coordinate (q1) at (-0.4,0.175);
\coordinate (q2) at (-0.4,0.525);
\coordinate (x1) at (-0.63,0.375);
\coordinate (x2) at (-0.58,0.325);
\coordinate (y1) at (-0.63,0.325);
\coordinate (y2) at (-0.58,0.375);
\draw[very thick] (q1) .. controls +(-0.4,0) and +(-0.4,0) ..  (q2); 
\draw[very thick, blue!80!black,decoration={markings, mark=at position 0.5 with {\arrow{>}}}, postaction={decorate}] (q1) .. controls +(0.15,0) and +(0.15,0) ..  (q2); 
\draw[very thick, blue!80!black, opacity=0.2] (q1) .. controls +(-0.15,0) and +(-0.15,0) ..  (q2); 
\draw[thick] (x1) to (x2);
\draw[thick] (y1) to (y2);
\draw[thick] (-0.605,0.35) circle (0.05cm);
\coordinate (p1) at (0,-0.575);
\coordinate (p2) at (0,-0.225);
\coordinate (p3) at (0,0.175);
\coordinate (p4) at (0,0.525);
\coordinate (p5) at (0.8,0.15);
\coordinate (p6) at (0.8,-0.15);
\draw (p2) .. controls +(0.35,0) and +(0.35,0) ..  (p3); 
\draw (p4) .. controls +(0.5,0) and +(-0.5,0) ..  (p5); 
\draw (p6) .. controls +(-0.5,0) and +(0.5,0) ..  (p1); 
\draw[very thick, blue!80!black,decoration={markings, mark=at position 0.5 with {\arrow{>}}}, postaction={decorate}] (p1) .. controls +(0.15,0) and +(0.15,0) ..  (p2); 
\draw[very thick, blue!80!black] (p1) .. controls +(-0.15,0) and +(-0.15,0) ..  (p2); 
\draw[very thick, blue!80!black,decoration={markings, mark=at position 0.5 with {\arrow{>}}}, postaction={decorate}] (p3) .. controls +(0.15,0) and +(0.15,0) ..  (p4); 
\draw[very thick, blue!80!black] (p3) .. controls +(-0.15,0) and +(-0.15,0) ..  (p4); 
\draw[very thick, blue!80!black,decoration={markings, mark=at position 0.5 with {\arrow{<}}}, postaction={decorate}] (p5) .. controls +(0.15,0) and +(0.15,0) ..  (p6); 
\draw[very thick, blue!80!black,opacity=0.2] (p5) .. controls +(-0.15,0) and +(-0.15,0) ..  (p6); 
\coordinate (r1) at (1.2,0.15);
\coordinate (r2) at (1.2,-0.15);
\coordinate (r3) at (1.9,0.15);
\coordinate (r4) at (1.9,-0.15);
\draw (r1) to (r3);
\draw (r2) to (r4);
\draw[very thick, blue!80!black,decoration={markings, mark=at position 0.5 with {\arrow{<}}}, postaction={decorate}] (r1) .. controls +(0.15,0) and +(0.15,0) ..  (r2); 
\draw[very thick, blue!80!black] (r1) .. controls +(-0.15,0) and +(-0.15,0) ..  (r2); 
\draw[very thick, blue!80!black,decoration={markings, mark=at position 0.5 with {\arrow{>}}}, postaction={decorate}] (r3) .. controls +(0.15,0) and +(0.15,0) ..  (r4); 
\draw[very thick, blue!80!black,opacity=0.2] (r3) .. controls +(-0.15,0) and +(-0.15,0) ..  (r4); 
\end{tikzpicture}
\caption{The equality of bordisms responsible for the Klein condition.}
\label{fig:KleinCond}
\end{figure}
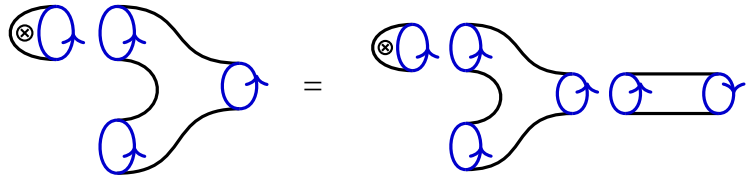

We now come the main classification result.

\begin{Thm}
\label{thm:genStruAlg}
Two dimensional unoriented open/closed TFTs are classified by the data of an underlying closed theory, as in Theorem \ref{thm:cUnoriTFT}, together with the data of a $\C$-linear strict duality $P$ on $\mathcal{B}$. This data is required to satisfy the following conditions:
\begin{enumerate}[label=(\roman*)]
\item \label{ite:identObj} The functor $P$ is the identity on objects.

\item \label{ite:compatPairing} $\langle P(\phi) \rangle_V = \langle \phi \rangle_V$ for all $\phi \in \Hom_{\Brane}(V,V)$.

\item \label{ite:compatBulkBound} $P \circ \bb_V = \bb_V \circ p$ for all $V \in \Brane$.

\item \label{ite:compatBoundBulk} $p \circ \bb^V = \bb^V\circ P$ for all $V \in \Brane$.

\item \label{ite:unoriCardy} (The \emph{unoriented Cardy condition}) Let $\{\psi_i\}_i$ be a basis of $\Hom_{\Brane}(V,V)$ with dual basis $\{\psi^i\}_{i}$ with respect to $\langle-,-\rangle_{V,V}$. Then there is an equality
\begin{equation}
\label{eq:unoriCardy}
\bb_V(Q) = \sum_i \psi^i \circ P(\psi_i).
\end{equation}
\end{enumerate}
\end{Thm}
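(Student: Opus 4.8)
The plan is to follow the same strategy as in the oriented case (Theorem \ref{thm:ocOriTFT}) and the closed unoriented case (Theorem \ref{thm:cUnoriTFT}): present the unoriented open/closed bordism category $\Bord_2^D$ by generators and relations, and check that a symmetric monoidal functor $\TFT: \Bord_2^D \to \Vect_{\C}$ is the same as the listed algebraic data. Concretely, I would first recall that $\Bord_2^D$ is generated, as a symmetric monoidal category, by the generators of $\Bord_2^{\ori,D}$ (which produce the commutative Frobenius algebra $A$, the Calabi--Yau category $\Brane$, and the maps $\bb^V,\bb_V$ subject to conditions (i)--(iv) of Theorem \ref{thm:ocOriTFT}) together with two new ``unoriented'' generators: the orientation-reversing mapping cylinder of the circle, which gives the isometric involution $p: A \to A$, and the orientation-reversing mapping cylinder of the interval labelled by $V$, which gives a map $\Hom_{\Brane}(V,W) \to \Hom_{\Brane}(W,V)$. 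The latter is exactly a contravariant functor $P$ on $\Brane$; the fact that reversing orientation of the interval fixes its endpoints forces $P$ to be the identity on objects, giving \ref{ite:identObj}, and the relation identifying the double orientation-reversal with the identity forces $P^2 = \id$ on morphisms, i.e. $P$ is a strict duality.

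The remaining work is to match each new relation in the bordism category with one of the algebraic conditions. I would organize this as follows. The relation obtained by capping off the reflected interval with a Calabi--Yau disc (closing an interval diagram with a reflection into a Möbius-type cap on the boundary circle of morphisms) yields the compatibility of $P$ with the Calabi--Yau trace, condition \ref{ite:compatPairing}. Gluing a reflected interval diagram against the bulk-boundary map and against the boundary-bulk map, together with the already-known condition that the closed reflection cylinder realizes $p$, gives the two square relations \ref{ite:compatBulkBound} and \ref{ite:compatBoundBulk}; here one uses the oriented relations (ii)--(iii) of Theorem \ref{thm:ocOriTFT} and isometry of $p$ to reduce the glued bordisms to the stated identities. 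Finally, the Möbius strip with one boundary circle replaced by an interval labelled $V$ can be cut in two different ways: one way produces $\bb_V(Q)$ (insert the crosscap state into the bulk, then apply the bulk-boundary map), the other way produces the sum $\sum_i \psi^i \circ P(\psi_i)$ over a basis of $\Hom_{\Brane}(V,V)$ and its dual (cut the Möbius strip along an arc and recognize the handle-with-a-crosscap as a reflected pair of pants summed against the Calabi--Yau copairing). Equating the two gives the unoriented Cardy condition \ref{ite:unoriCardy}, equation \eqref{eq:unoriCardy}. Conversely, one checks that any functor out of the free symmetric monoidal category on these generators descends to $\Bord_2^D$ precisely when (i)--(v) hold, and one verifies well-definedness by checking these relations generate all relations—this last point can be imported from the cited results \cite{turaev2006,alexeevski2006,lauda2008} by observing that $\Bord_2^D$ is generated by its closed sector, its oriented open/closed sector, and a single new generator (the reflected interval) whose interactions with the other generators are exhausted by \ref{ite:identObj}--\ref{ite:unoriCardy}.

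The main obstacle is the completeness of the relations: namely, proving that \ref{ite:identObj}--\ref{ite:unoriCardy}, added to the oriented open/closed relations and the closed unoriented relations, suffice to cut out \emph{all} relations in $\Bord_2^D$. This requires a Morse-theoretic or handle-decomposition analysis of compact surfaces with corners and boundary, partitioned into oriented and crosscap-bearing pieces, in the spirit of the proofs of Theorems \ref{thm:ocOriTFT} and \ref{thm:cUnoriTFT}; the delicate points are the ``moves'' relating different ways a crosscap can slide past a boundary-labelled interval and past the other generators, each of which must be shown to be a consequence of the listed relations. I expect to handle this by a normal-form argument: every connected bordism can be brought, using the oriented relations plus sliding crosscaps to a standard location, into a standard form determined by its genus, number of crosscaps, boundary circles, and boundary intervals with their $D$-labels; then one verifies that the two relations \ref{ite:compatBulkBound}--\ref{ite:compatBoundBulk} (for moving a crosscap between closed and open regions) and \ref{ite:unoriCardy} (the open analogue of the Klein condition) are exactly what is needed to reach the normal form, while \ref{ite:identObj}, \ref{ite:compatPairing} and strictness of $P$ handle the interval generator itself. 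I would relegate the detailed surface combinatorics to a reference or an appendix, emphasizing instead the dictionary between generators/relations and algebraic data.
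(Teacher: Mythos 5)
Your strategy is the same one the paper relies on, but the paper's own ``proof'' is essentially a citation: it invokes the single-boundary-condition case proved in \cite[\S 4]{alexeevski2006} (where the data is called a structure algebra) and observes that the passage to many objects of $\Brane$ goes exactly as in the oriented case treated in \cite[\S 5]{lauda2008}. What you propose is to reconstruct, rather than cite, the generators-and-relations analysis of $\Bord_2^D$ that underlies those references, and your dictionary between generators/relations and the data $(A,p,Q,\Brane,\bb^{\bullet},\bb_{\bullet},P)$ matches the paper's topological glosses (the reflected cylinder gives $p$, the compact M\"obius strip gives $Q$, the half-twisted strip gives $P$, and the two slicings of the M\"obius strip with a $V$-labelled boundary give \eqref{eq:unoriCardy} as in Figure \ref{fig:unoriCardy}). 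The honest assessment is that the whole mathematical content of the theorem is the completeness step you flag as the ``main obstacle'': your normal-form argument is only sketched, and in the end you defer it to the same sources \cite{turaev2006,alexeevski2006,lauda2008}, which is precisely what the paper does, so your write-up is an acceptable (indeed more self-contained) version of the paper's argument rather than a genuinely different route. Two small points to tighten: the reflection of the interval swaps its endpoints rather than fixing them, so condition \ref{ite:identObj} is better justified by noting that the reflected strip induces $P_{V,W}:\Hom_{\Brane}(V,W)\rightarrow\Hom_{\Brane}(W,V)$ without altering the $D$-labels, i.e.\ the object set is untouched; and strictness of $P$ comes from the mapping cylinder of the interval reflection being an involution up to the identity bordism, which is worth stating as a relation rather than an observation.
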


\begin{proof}
The theorem is proved in \cite[\S 4]{alexeevski2006} under the assumption that $\Brane$ has a single object, where the above algebraic data is known as a \emph{structure algebra}. This proof generalizes immediately to allow for $\Brane$ to have many objects, in the same way as the analogous generalization in the oriented case \cite[\S 5]{lauda2008}.
\end{proof}

Topologically, $P$ is the image under $\TFT$ of the mapping cylinder of reflection of the closed interval so that $P_{V,W}: \Hom_{\Brane}(V,W) \rightarrow \Hom_{\Brane}(W,V)$ comes from the bordism
\[
\begin{tikzpicture}[scale=3.0]
\draw[very thick, blue!80!black,dashed] (-.25,-1) to (0.25,-1.0);
\draw[very thick, blue!80!black,dashed] (-.25,-0.75) to (0.25,-0.75);
\draw[very thick, blue!80!black,decoration={markings, mark=at position 0.5 with {\arrow{>}}}, postaction={decorate}] (-.25,-1) to (-.25,-0.75);
\draw[very thick, blue!80!black,decoration={markings, mark=at position 0.5 with {\arrow{<}}}, postaction={decorate}] (.25,-1) to (.25,-0.75);
\node at (-.25,-1.05)  {\scriptsize $V$};
\node at (-.25,-0.7)  {\scriptsize $W$};
\node at (.25,-1.05)  {\scriptsize $V$};
\node at (.25,-0.7)  {\scriptsize $W$};
\node at (0.375,-0.9)  {=};
\end{tikzpicture}
\begin{tikzpicture}[scale=3.0]
\draw[very thick, blue!80!black,dashed] (-.25,-1) to (0.25,-0.75);
\draw[very thick, blue!80!black,dashed] (-.25,-0.75) to (0.25,-1.0);
\draw[very thick, blue!80!black,decoration={markings, mark=at position 0.5 with {\arrow{>}}}, postaction={decorate}] (-.25,-1) to (-.25,-0.75);
\draw[very thick, blue!80!black,decoration={markings, mark=at position 0.5 with {\arrow{>}}}, postaction={decorate}] (.25,-1) to (.25,-0.75);
\node at (-.25,-1.05)  {\scriptsize $V$};
\node at (-.25,-0.7)  {\scriptsize $W$};
\node at (.25,-1.05)  {\scriptsize $W$};
\node at (.25,-0.7)  {\scriptsize $V$};
\node at (0.35,-0.9)  {.};
\end{tikzpicture}
\]
As indicated on the right, we will picture this bordism as embedded in $\R^3$ with a half-twist. That $P$ is a strict involution follows from the fact that reflection of the closed interval is an involution. We record two basic consequences of Theorem \ref{thm:genStruAlg}.

\begin{Prop}
\label{prop:wittenIndices}
\hspace{2em}
\begin{enumerate}
\item The equality $\langle Q^2 \rangle_0 = \tr_A \, p$ holds.
\item For any $V \in \Brane$ and $\phi \in \Hom_{\Brane}(V,V)$, the equality
\[
\langle \bb^V(\phi), Q \rangle_0
=
\tr_{\Hom_{\Brane}(V,V)} \, \iota_{\phi}
\]
holds, where $\iota_{\phi}$ is defined analogously to Section \ref{sec:FSIndDetail}.
\end{enumerate}
\end{Prop}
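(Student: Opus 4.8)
The plan is to derive both identities from the defining relations collected in Theorems~\ref{thm:ocOriTFT}, \ref{thm:cUnoriTFT} and \ref{thm:genStruAlg}, together with the elementary linear algebra fact that if $U$ is a finite dimensional vector space with non-degenerate bilinear form, $\{e_i\}$ a basis and $\{e^i\}$ the dual basis, then $\tr_U T = \sum_i \langle T(e_i), e^i\rangle$ for every linear operator $T$ on $U$. I would also record at the outset that $p$ is \emph{self-adjoint} for the Frobenius pairing $\langle a, b\rangle_0 := \langle ab\rangle_0$: since $p$ is an isometry and an involution, $\langle p(x), y\rangle_0 = \langle p(x), p(p(y))\rangle_0 = \langle x, p(y)\rangle_0$.

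For part (1), I would apply the trace $\langle-\rangle_0$ to the Klein condition $Q^2 = \sum_i p(a^i) a_i$, where $\{a_i\}$, $\{a^i\}$ are $\langle-,-\rangle_0$-dual bases of $A$. This yields $\langle Q^2\rangle_0 = \sum_i \langle p(a^i), a_i\rangle_0$, and self-adjointness of $p$ rewrites the summand as $\langle a^i, p(a_i)\rangle_0$, so that $\langle Q^2\rangle_0 = \sum_i \langle a^i, p(a_i)\rangle_0 = \tr_A p$ by the dual-basis trace formula. Conceptually this is the identification of the two standard descriptions of the Klein bottle, as a connected sum of two crosscaps and as the mapping torus of the reflection of $S^1$.

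For part (2), I would first use condition (iii) of Theorem~\ref{thm:ocOriTFT} to turn the left-hand side into $\langle \bb^V(\phi), Q\rangle_0 = \langle \phi, \bb_V(Q)\rangle_{V,V}$, and then substitute the unoriented Cardy condition \eqref{eq:unoriCardy}, namely $\bb_V(Q) = \sum_i \psi^i \circ P(\psi_i)$, for $\{\psi_i\}$ a basis of $\Hom_{\Brane}(V,V)$ with $\langle-,-\rangle_{V,V}$-dual basis $\{\psi^i\}$. Expanding bilinearly and unwinding the definition $\langle f, g\rangle_{V,V} = \langle f\circ g\rangle_V$ gives $\langle \bb^V(\phi), Q\rangle_0 = \sum_i \langle \phi \circ \psi^i \circ P(\psi_i)\rangle_V$. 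On the other hand, the dual-basis trace formula applied to $\iota_\phi: f\mapsto P(f)\circ\phi$, together with cyclicity of the Calabi--Yau trace $\langle-\rangle_V$, gives $\tr_{\Hom_{\Brane}(V,V)} \iota_\phi = \sum_i \langle P(\psi_i)\circ\phi\circ\psi^i\rangle_V = \sum_i \langle \phi\circ\psi^i\circ P(\psi_i)\rangle_V$, which is the same expression. Topologically, both sides evaluate the TFT on the M\"obius band whose boundary circle carries the label $V$, with a $\phi$-insertion.

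The arguments are bookkeeping rather than substance: the only care required is in keeping the conventions for $\langle-,-\rangle_0$ and $\langle-,-\rangle_{V,V}$ and their dual bases consistent, in using that $\tr \iota_\phi$ may be computed with the dual bases appearing in the Cardy condition, and in invoking cyclicity of $\langle-\rangle_V$ at the right step. I do not expect a genuine obstacle.
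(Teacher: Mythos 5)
Your proposal is correct and follows essentially the same route as the paper: part (2) is exactly the paper's chain (adjointness of $\bb^V$ and $\bb_V$, the unoriented Cardy condition, cyclicity of $\langle-\rangle_V$, and the dual-basis trace formula), and part (1) is the paper's computation of $\langle \sum_i p(a^i)a_i\rangle_0$, with your use of self-adjointness of $p$ in place of the paper's choice of a $\pm1$-eigenbasis being only a cosmetic variation. No gaps.
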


\begin{proof}
Since $p$ is an involution, there exists a basis $\{a_i\}_i$ of $A$ such that $p(a_i) = s_i a_i$ with $s_i \in \{1,-1\}$. Let $\{a^i\}_i$ be a dual basis, so that $\langle a^j, a_i \rangle_0 = \delta^j_i$. Since $p$ is an isometry of $\langle- \rangle_0$, we have $p(a^i) = s_i a^i$. With these preliminaries, we compute
\[
\langle Q^2 \rangle_0
=
\langle \sum_i p(a^i) a_i \rangle_0 = \sum_i s_i \langle a^i a_i \rangle_0 = \sum_i s_i = \tr_A \, p.
\]

For the second statement, we compute
\begin{multline*}
\langle \bb^V(\phi), Q \rangle_0
=
\langle \phi, \bb_V(Q) \rangle_{V,V}
=
\sum_i \langle \phi \circ  \psi^i \circ P(\psi_i) \rangle_V =\\
\sum_i \langle \psi^i \circ \iota_{\phi}(\psi_i)\rangle_V
=
\tr_{\Hom_{\Brane}(V,V)} \, \iota_{\phi}.
\end{multline*}
The first equality is the adjointness of $\bb^V$ and $\bb_V$, the second is the unoriented Cardy condition and the third is cyclicity of traces. 
\end{proof}

By non-degeneracy of the Calabi--Yau pairings, the unoriented Cardy condition is equivalent to the second equality from Proposition \ref{prop:wittenIndices}, which we term the \emph{baggy unoriented Cardy condition}. The unoriented Cardy condition reflects the equality of two ways of evaluating the TFT on the M\"{o}bius strip with boundary component labelled by $V$. See Figure \ref{fig:unoriCardy}.

The next result constructs the algebraic input of Theorem \ref{thm:genStruAlg} from a Calabi--Yau category with a contravariant involution which need not act trivially on objects.

\begin{figure}
\begin{tikzpicture}[very thick,scale=1.75,baseline=0.55cm,color=black]
\coordinate (q1) at (-0.4,0.175);
\coordinate (q2) at (-0.4,0.525);
\coordinate (x1) at (-0.63,0.375);
\coordinate (x2) at (-0.58,0.325);
\coordinate (y1) at (-0.63,0.325);
\coordinate (y2) at (-0.58,0.375);
\draw[very thick] (q1) .. controls +(-0.4,0) and +(-0.4,0) ..  (q2); 
\draw[very thick, blue!80!black,decoration={markings, mark=at position 0.5 with {\arrow{>}}}, postaction={decorate}] (q1) .. controls +(0.15,0) and +(0.15,0) ..  (q2); 
\draw[very thick, blue!80!black, opacity=0.2] (q1) .. controls +(-0.15,0) and +(-0.15,0) ..  (q2); 
%
\draw[thick] (x1) to (x2);
\draw[thick] (y1) to (y2);
\draw[thick] (-0.605,0.35) circle (0.05cm);
\coordinate (p1) at (0,-0.575);
\coordinate (p2) at (0,-0.225);
\coordinate (p3) at (0,0.175);
\coordinate (p4) at (0,0.525);
\coordinate (p5) at (0.8,0.15);
\coordinate (p6) at (0.8,-0.15);
%
\coordinate (r1) at (0,0.525);
\coordinate (r2) at (0.0,0.155);
\coordinate (r3) at (0.75,0.525);
\coordinate (r4) at (0.75,0.155);
%
\draw (r1) to (r3);
\draw (r2) to (r4);
\draw[very thick, blue!80!black,decoration={markings, mark=at position 0.5 with {\arrow{<}}}, postaction={decorate}] (r1) .. controls +(0.15,0) and +(0.15,0) ..  (r2); 
\draw[very thick, blue!80!black] (r1) .. controls +(-0.15,0) and +(-0.15,0) ..  (r2);
\draw[very thick, blue!80!black] (r3) to (r4);
\draw[very thick, blue!80!black,dashed] (r3) .. controls +(-0.15,0) and +(-0.15,0) ..  (r4); 
\end{tikzpicture}
\;\;
=
\;\;
\begin{tikzpicture}[very thick,scale=2.01,color=black,baseline=-1.1cm]
\draw[very thick, blue!80!black,dashed] (-.35,0) arc [start angle=90, end angle=270, x radius=0.5,y radius =0.5] ;
\draw[very thick, blue!80!black,dashed] (-.35,-0.25) arc [start angle=90, end angle=270, x radius=0.25,y radius =0.25] ;
\draw[very thick, blue!80!black] (-.35,0) to (-.35,-0.25);
\draw[very thick, blue!80!black] (-.35,-0.75) to (-.35,-1.0);
\draw[very thick, blue!80!black,dashed] (-.25,0) to (0.25,0);
\draw[very thick, blue!80!black,dashed] (-.25,-0.25) to (0.25,-0.25);
\draw[very thick, blue!80!black] (-.25,0) to (-.25,-0.25);
\draw[very thick, blue!80!black] (.25,0) to (.25,-0.25);
\draw[very thick, blue!80!black,dashed] (-.25,-1) to (0.25,-0.75);
\draw[very thick, blue!80!black,dashed] (-.25,-0.75) to (0.25,-1.0);
\draw[very thick, blue!80!black] (-.25,-1) to (-.25,-0.75);
\draw[very thick, blue!80!black] (.25,-1) to (.25,-0.75);
\draw[very thick, blue!80!black,dashed] (.35,-1.0) arc [start angle=-90, end angle=90, x radius=0.5,y radius =0.5] ;
\draw[very thick, blue!80!black,dashed] (.35,-0.75) arc [start angle=-90, end angle=90, x radius=0.25,y radius =0.25] ;
\draw[very thick, blue!80!black] (.35,0) to (.35,-0.25);
\draw[very thick, blue!80!black] (.35,-0.75) to (.35,-1.0);
\end{tikzpicture}
\caption{The equality of bordisms responsible for the unoriented Cardy condition \eqref{eq:unoriCardy}. All boundaries are labelled by the object $V \in \Brane$.}
\label{fig:unoriCardy}
\end{figure}
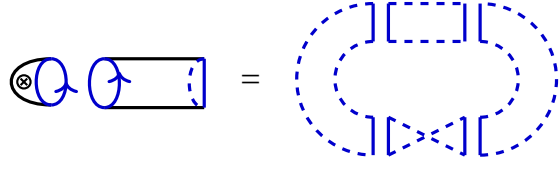

\begin{Prop}
\label{prop:orientifoldTFT}
Let $(\Brane, \bb^{\bullet},\bb_{\bullet},A)$ define a two dimensional oriented open/closed TFT and $(p,Q)$ an unoriented lift of $A$. Let $(P,\Theta)$ be a duality structure on $\Brane$ such that $\langle P(\phi) \rangle_{P(V)} = \langle \phi \rangle_V$ for all $\phi \in \Hom_{\Brane}(V,V)$ and $P \circ \bb_V = \bb_{P(V)} \circ p$ and $p \circ \bb^V = \bb^{P(V)} \circ P$ for all $V \in \Brane$. If the equality
\begin{equation}
\label{eq:LefInd}
\tr_{\Hom_{\Brane}(V,P(V))} \, \iota_{\phi} = \langle \bb^V(\phi),Q \rangle_0
\end{equation}
holds for all $\phi \in \Hom_{\Brane}(V,V)$, then $(\Brane^{\tilde{h}\Ctwo}, \bb^{\bullet},\bb_{\bullet},A,p,Q)$ defines a two dimensional unoriented open/closed TFT.
\end{Prop}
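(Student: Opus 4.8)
The plan is to verify, one condition at a time, the criteria of Theorem~\ref{thm:genStruAlg} for the data $(\Brane^{\tilde{h}\Ctwo},\bb^{\bullet},\bb_{\bullet},A,p,Q)$ equipped with the strict duality $(P^{\tilde{h}\Ctwo},\Theta^{\tilde{h}\Ctwo})$ of Lemma~\ref{lem:homoFixDual}. The underlying oriented structure costs almost nothing: the forgetful assignment $(V,\psi_V)\mapsto V$ identifies each $\Hom_{\Brane^{\tilde{h}\Ctwo}}((V,\psi_V),(W,\psi_W))$ with $\Hom_{\Brane}(V,W)$ compatibly with composition, and the Calabi--Yau traces and the maps $\bb^{\bullet}$, $\bb_{\bullet}$ are transported along it. Since $(P,\Theta)$ is $\C$-linear and exact, homotopy fixed points are closed under direct sums (via $(V,\psi_V)\oplus(W,\psi_W)=(V\oplus W,\psi_V\oplus\psi_W)$), so $\Brane^{\tilde{h}\Ctwo}$ is again additive and Calabi--Yau, and conditions~(i)--(iv) of Theorem~\ref{thm:ocOriTFT} hold for it verbatim, as they concern only $\Hom$-spaces, compositions, traces and $\bb$-maps which the transport leaves literally unchanged. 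The closed sector $(A,1_A,\langle-\rangle_0,p,Q)$ is an unoriented Frobenius algebra by hypothesis, and $P^{\tilde{h}\Ctwo}$ is a $\C$-linear strict duality which is the identity on objects by Lemma~\ref{lem:homoFixDual}; this is condition~\ref{ite:identObj}.

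Conditions~\ref{ite:compatPairing}--\ref{ite:compatBoundBulk} I would dispatch by expanding $P^{\tilde{h}\Ctwo}(\phi)=\psi_V^{-1}\circ P(\phi)\circ\psi_V$ and feeding in the hypotheses on $(P,\Theta)$. For~\ref{ite:compatPairing}, cyclicity of the trace gives $\langle P^{\tilde{h}\Ctwo}(\phi)\rangle_{(V,\psi_V)}=\langle\psi_V^{-1}\circ P(\phi)\circ\psi_V\rangle_V=\langle P(\phi)\rangle_{P(V)}=\langle\phi\rangle_V$, the last step being the hypothesis on $P$. For~\ref{ite:compatBulkBound}, the hypothesis $P\circ\bb_V=\bb_{P(V)}\circ p$ turns $P^{\tilde{h}\Ctwo}(\bb_{(V,\psi_V)}(a))$ into $\psi_V^{-1}\circ\bb_{P(V)}(p(a))\circ\psi_V$, and condition~(ii) of Theorem~\ref{thm:ocOriTFT} applied to $\psi_V\colon V\to P(V)$ gives $\bb_{P(V)}(p(a))\circ\psi_V=\psi_V\circ\bb_V(p(a))$, so the expression collapses to $\bb_V(p(a))=\bb_{(V,\psi_V)}(p(a))$. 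Condition~\ref{ite:compatBoundBulk} requires one auxiliary fact: for any isomorphism $\alpha\colon V\to W$ in $\Brane$ and $f\in\Hom_{\Brane}(W,W)$,
\[
\bb^V(\alpha^{-1}\circ f\circ\alpha)=\bb^W(f),
\]
which I would prove by pairing with an arbitrary $a\in A$, using adjointness (condition~(iii) of Theorem~\ref{thm:ocOriTFT}), cyclicity of the trace, condition~(ii) of Theorem~\ref{thm:ocOriTFT} in the form $\alpha\circ\bb_V(a)\circ\alpha^{-1}=\bb_W(a)$, and non-degeneracy of $\langle-\rangle_0$. Taking $\alpha=\psi_V$ and $f=P(\phi)$ then gives $\bb^{(V,\psi_V)}(P^{\tilde{h}\Ctwo}(\phi))=\bb^{P(V)}(P(\phi))=p(\bb^V(\phi))=p(\bb^{(V,\psi_V)}(\phi))$, the middle equality being the hypothesis $p\circ\bb^V=\bb^{P(V)}\circ P$; this is condition~\ref{ite:compatBoundBulk}.

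The remaining condition~\ref{ite:unoriCardy} is the crux. By the remark following Proposition~\ref{prop:wittenIndices} (whose equivalence uses only conditions~(i)--(iv), already established, and non-degeneracy of the Calabi--Yau pairings), it is equivalent to the baggy form $\langle\bb^{(V,\psi_V)}(\phi),Q\rangle_0=\tr_{\Hom_{\Brane}(V,V)}\,j_\phi$ for all $\phi\in\Hom_{\Brane}(V,V)$, where $j_\phi(f)=P^{\tilde{h}\Ctwo}(f)\circ\phi=\psi_V^{-1}\circ P(f)\circ\psi_V\circ\phi$ is the indicator operator of $\Brane^{\tilde{h}\Ctwo}$, with no $\Theta^{\tilde{h}\Ctwo}$-term because that duality is strict. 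The point is that the linear isomorphism $c\colon\Hom_{\Brane}(V,V)\to\Hom_{\Brane}(V,P(V))$, $f\mapsto\psi_V\circ f$, intertwines $j_\phi$ with the ambient operator $\iota_\phi$ of~\eqref{eq:LefInd}: using contravariance of $P$ and the homotopy fixed point identity $P(\psi_V)\circ\Theta_V=\psi_V$ (Definition~\ref{def:catWDual}),
\[
\iota_\phi(c(f))=P(\psi_V\circ f)\circ\Theta_V\circ\phi=P(f)\circ\bigl(P(\psi_V)\circ\Theta_V\bigr)\circ\phi=P(f)\circ\psi_V\circ\phi=c\bigl(j_\phi(f)\bigr).
\]
Hence $j_\phi$ and $\iota_\phi$ are conjugate, so $\tr_{\Hom_{\Brane}(V,V)}\,j_\phi=\tr_{\Hom_{\Brane}(V,P(V))}\,\iota_\phi$, and combining this with the hypothesis~\eqref{eq:LefInd} and $\bb^{(V,\psi_V)}(\phi)=\bb^V(\phi)$ yields the baggy unoriented Cardy condition, finishing the verification. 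The main source of friction throughout is keeping the three layers of duality data straight --- the ambient $(P,\Theta)$ on $\Brane$, the induced strict $(P^{\tilde{h}\Ctwo},\Theta^{\tilde{h}\Ctwo})$ on $\Brane^{\tilde{h}\Ctwo}$, and the homotopy fixed point structures $\psi_V$ --- and the only genuinely non-formal step is the Cardy condition, where the identity $P(\psi_V)\circ\Theta_V=\psi_V$ is precisely what converts the ambient object-moving indicator into the strict one living on $\Brane^{\tilde{h}\Ctwo}$.
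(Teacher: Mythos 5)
Your proposal is correct and follows essentially the same route as the paper: transport the Calabi--Yau and $\bb$-structures to $\Brane^{\tilde{h}\Ctwo}$, use Lemma \ref{lem:homoFixDual} for the strict duality, check conditions \ref{ite:compatPairing}--\ref{ite:compatBoundBulk} from the stated compatibilities, and deduce the unoriented Cardy condition from \eqref{eq:LefInd} via non-degeneracy together with the homotopy fixed point identity $P(\psi_V)\circ\Theta_V=\psi_V$. The only cosmetic difference is that you perform the Cardy step by conjugating the indicator operators at the level of traces (and you spell out the conjugation-invariance of $\bb^{\bullet}$ and conditions \ref{ite:compatPairing}--\ref{ite:compatBoundBulk} in more detail), whereas the paper conjugates the dual bases directly in the Cardy sum; the ingredients are identical.
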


\begin{proof}
By Lemma \ref{lem:homoFixDual}, the triple $(\Brane^{\tilde{h}\Ctwo},P^{\tilde{h}\Ctwo},\Theta^{\tilde{h}\Ctwo})$ is a category with strict duality and $P^{\tilde{h}\Ctwo}$ acts trivially on objects. The category $\Brane^{\tilde{h}\Ctwo}$ inherits a Calabi--Yau structure from $\Brane$ with traces $\langle - \rangle_{(V,\psi_V)} := \langle- \rangle_V$. Define boundary-bulk and bulk-boundary maps for $\Brane^{\tilde{h}\Ctwo}$ by $\bb^{(V,\psi_V)} = \bb^V$ and $\bb_{(V,\psi_V)} = \bb_V$. The assumption that $P$ preserves the Calabi--Yau structure and that $P$ and $p$ are compatible with $\bb_{\bullet}$ and $\bb^{\bullet}$ verifies conditions \ref{ite:compatPairing}-\ref{ite:compatBoundBulk} of Theorem \ref{thm:genStruAlg} for $P^{\tilde{h} \Ctwo}$. It remains to verify the unoriented Cardy condition. Let $(V,\psi_V) \in \Brane^{\tilde{h}\Ctwo}$. Let $\{\psi_i\}_i$ be a basis of $\Hom_{\Brane}(V,P(V))$ with dual basis $\{\psi^i\}_{i}$ of $\Hom_{\Brane}(P(V),V)$. Then $\{\psi_V^{-1} \circ \psi_i \}_i$ is a basis of $\Hom_{\Brane}(V,V)$ with dual basis $\{\psi^i \circ \psi_V\}_{i}$. We compute
\begin{multline*}
\bb_{(V,\psi_V)}(Q)
=
\sum_i \psi^i \circ P(\psi_i) \circ \Theta_V = \\
\sum_i \psi^i \circ \psi_V \circ \psi^{-1}_V \circ P(\psi_i) \circ P(\psi_V^{-1}) \circ \psi_V
=
\sum_i \psi^i \circ \psi_V \circ P^{\tilde{h} \Ctwo}(\psi_V^{-1} \circ \psi_i).
\end{multline*}
For the first equality, note that the discussion proceeding Proposition \ref{prop:wittenIndices} shows that equation \eqref{eq:LefInd} implies that $\bb_V(Q) = \sum_i \psi^i \circ P(\psi_i) \circ \Theta_V$. The second equality follows from the coherence condition on homotopy fixed points and the final equality from the definition of $(P^{\tilde{h} \Ctwo},\Theta^{\tilde{h} \Ctwo})$.
\end{proof}

We comment on the physical interpretation of Proposition \ref{prop:orientifoldTFT}. As mentioned in the introduction, the Calabi--Yau category $\Brane$ should be seen as a model for the category of D-branes in an oriented string theory. With this interpretation, a duality structure $(P,\Theta)$ which preserves the Calabi--Yau pairings is the categorical data of the orientifold construction; see \cite{diaconescu2007,hori2008} in the setting of orientifolds of IIB string theory and Landau--Ginzburg theory. In this context, the quantity $\tr_{\Hom_{\Brane}(V,P(V))} \, \iota_{\phi}$ is a \emph{parity-twisted Witten index} \cite[\S 2]{brunner2004} and it is through its computation via closed sector quantities, namely equation \eqref{eq:LefInd}, that the crosscap state $Q$ naturally appears. The D-branes which survive the orientifold projection are the homotopy fixed points of $(P,\Theta)$, that is, objects of the category $\Brane^{\tilde{h}\Ctwo}$ above. With these remarks in mind, Proposition \ref{prop:orientifoldTFT} is an orientifold-type construction of an unoriented open/closed TFT from an oriented open/closed TFT.

\subsection{The Frobenius--Schur element as a crosscap state}

We give an algebraic construction of a two dimensional unoriented open/closed TFT from twisted Real representation theory. When $\Gh = \G \times \Ctwo$ and the cohomological data $(\hat{\theta},\lambda)$ is trivial, this generalizes results of \cite{alexeevski2006,loktev2011}. When $\lambda$ is trivial, a topological construction of the closed sector of this theory was given in \cite[\S 4.4]{mbyoung2020}.

Fix group theoretic data $(\Gh, \hat{\theta},\lambda)$ as in Section \ref{sec:RealProjRep}. Let $A = Z(\C^{\theta^{-1}}[\G])$ with Frobenius pairing $\langle - , - \rangle_G$ and $\Brane = \Rep^{\theta}(\G)$ the Calabi--Yau category with traces $\langle \phi \rangle_V = \frac{1}{\vert \G \vert} \tr_V \, \phi$. The boundary-bulk map $\bb^V$ is as in Section \ref{sec:FSIndDetail} and the bulk-boundary map is defined by
\[
\bb_V \Big(\sum_{g \in \G} a_g l_g \Big)
=
\sum_{g \in \G} a_g \theta([g \vert g^{-1}])^{-1} \rho_V(g^{-1}).
\]
This data defines a two dimensional oriented open/closed TFT $\TFT_{(\G,\theta)}$ via Theorem \ref{thm:ocOriTFT}. See \cite{moore2006,turaev2007,khoi2011}. The main axiom to be verified is the oriented Cardy condition which, in the present setting, is a mild generalization of the orthogonality of characters of irreducible $\theta$-twisted representations.

\begin{Thm}
\label{thm:twistRealRepTFT}
The data $(\Gh,\hat{\theta},\lambda)$ defines a two dimensional unoriented open/closed TFT $\TFT_{(\Gh,\hat{\theta},\lambda)}$ whose oriented sector is a sub TFT of $\TFT_{(\G,\theta)}$. 
\end{Thm}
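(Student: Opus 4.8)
The plan is to deduce the theorem from the orientifold-type construction of Proposition~\ref{prop:orientifoldTFT}. We take as given (see \cite{moore2006,turaev2007,khoi2011}) that the data $(\Brane, \bb^{\bullet}, \bb_{\bullet}, A)$ with $\Brane = \Rep^{\theta}(\G)$, $A = Z(\C^{\theta^{-1}}[\G])$ and the boundary--bulk and bulk--boundary maps fixed above defines the oriented open/closed TFT $\TFT_{(\G,\theta)}$; the only substantive point there is the oriented Cardy condition, which is the orthonormality of irreducible $\theta$-twisted characters with respect to $\langle -, - \rangle_{\G}$. By Proposition~\ref{prop:orientifoldTFT} it then suffices to supply: (a) an unoriented lift $(p,Q)$ of the commutative Frobenius algebra $A$ in the sense of Theorem~\ref{thm:cUnoriTFT}; (b) a duality structure $(P,\Theta)$ on $\Brane$ which preserves the Calabi--Yau traces and satisfies $P \circ \bb_V = \bb_{P(V)} \circ p$ and $p \circ \bb^{V} = \bb^{P(V)} \circ P$; and (c) the identity~\eqref{eq:LefInd}. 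I would take $(P,\Theta) = (P^{(\hat{\theta},\lambda,\varsigma)}, \Theta^{(\hat{\theta},\lambda,\varsigma)})$ from Section~\ref{sec:RealProjRep} (for an arbitrary fixed $\varsigma \in \Gh \setminus \G$), $Q = \nu_{(\hat{\theta},\lambda)}$, and for $p$ the $\C$-linear involution of $Z(\C^{\theta^{-1}}[\G]) = HH_{\bullet}(\Rep^{\theta}(\G))$ induced by $P$, i.e.\ the map determined by $p \circ \bb^{V} = \bb^{P(V)} \circ P$; it has an explicit formula on the basis $\{l_g\}$ in terms of $\lambda$ and the orientation-twisted loop transgression $\uptau^{\refl}_{\pi}(\hat{\theta})$.

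First I would verify that $(A, p, Q)$ is an unoriented Frobenius algebra. That $p$ is well defined, a unital $\C$-algebra homomorphism squaring to the identity, and an isometry for $\langle -, - \rangle_{\G}$, is a direct computation from the defining formula for $p$ and the $2$-cocycle condition on $\hat{\theta}$, using the identities~\eqref{eq:2cocycleKey}--\eqref{eq:oddConj}. The condition $p(\nu_{(\hat{\theta},\lambda)}\, a) = \nu_{(\hat{\theta},\lambda)}\, a$ for all $a \in A$ rests on $\nu_{(\hat{\theta},\lambda)}$ being a $\theta$-twisted class function (Lemma~\ref{lem:FSClassFun}) together with the same cocycle calculus; and the Klein condition $\nu_{(\hat{\theta},\lambda)}^{2} = \sum_i p(a^i) a_i$ can be checked directly on the basis $\{l_g\}$, or cross-checked by expanding both sides in the orthonormal character basis using $p(\chi_V) = \chi_{P^{(\hat{\theta},\lambda)}(V)}$ and the decomposition in Corollary~\ref{cor:FSIndDecomp}.

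Next I would check the hypotheses of Proposition~\ref{prop:orientifoldTFT}. The equality $\langle P(\phi) \rangle_{P(V)} = \langle \phi \rangle_V$ is immediate, since $P$ acts on morphisms by $\C$-linear duality and $\tr_{V^{\vee}} \phi^{\vee} = \tr_V \phi$. The compatibilities $P \circ \bb_V = \bb_{P(V)} \circ p$ and $p \circ \bb^{V} = \bb^{P(V)} \circ P$ follow from the explicit formulas for $\bb_V$, $\bb^V$, $P^{(\hat{\theta},\lambda,\varsigma)}$ and $p$ and the cocycle identities; the second is essentially the definition of $p$. Crucially, the identity~\eqref{eq:LefInd} is \emph{exactly} Theorem~\ref{thm:LefGroupAlg}, since $\langle -, - \rangle_0 = \langle -, - \rangle_{\G}$ and $Q = \nu_{(\hat{\theta},\lambda)}$. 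With all hypotheses in place, Proposition~\ref{prop:orientifoldTFT} produces a two dimensional unoriented open/closed TFT $\TFT_{(\Gh,\hat{\theta},\lambda)}$ with closed sector $(A,p,Q)$ and open sector $\Rep^{\theta}(\G)^{\tilde{h}\Ctwo}$, whose objects are the $(\hat{\theta},\lambda)$-twisted Real representations of $\G$.

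Finally, for the statement about the oriented sector: the oriented subtheory of $\TFT_{(\Gh,\hat{\theta},\lambda)}$ is $(\Rep^{\theta}(\G)^{\tilde{h}\Ctwo}, \bb^{\bullet}, \bb_{\bullet}, A)$ with Calabi--Yau traces and structure maps inherited through the forgetful functor $F: \Rep^{\theta}(\G)^{\tilde{h}\Ctwo} \rightarrow \Rep^{\theta}(\G)$, $(V,\psi_V) \mapsto V$. By the very definition of morphisms in $\cat^{\tilde{h}\Ctwo}$ this functor is fully faithful, and it intertwines $\bb^{\bullet}$, $\bb_{\bullet}$ and the Calabi--Yau traces by construction; together with the identity on the closed sector $A$ this exhibits the oriented sector of $\TFT_{(\Gh,\hat{\theta},\lambda)}$ as a sub TFT of $\TFT_{(\G,\theta)}$. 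I expect the main labour to be the ``twisted cocycle calculus'' verifications of the second paragraph and of the $\bb$-compatibilities in the third — these are lengthy but mechanical; the one conceptually substantive ingredient, the unoriented Cardy condition, is already furnished by Theorem~\ref{thm:LefGroupAlg}.
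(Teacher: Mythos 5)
Your proposal follows essentially the same route as the paper: it invokes the orientifold construction of Proposition \ref{prop:orientifoldTFT} with duality structure $(P^{(\hat{\theta},\lambda)},\Theta^{(\hat{\theta},\lambda)})$, crosscap state $Q=\nu_{(\hat{\theta},\lambda)}$ and the induced involution $p$ on $Z(\C^{\theta^{-1}}[\G])$, uses Theorem \ref{thm:LefGroupAlg} as the unoriented (baggy) Cardy condition \eqref{eq:LefInd}, and identifies the oriented sector via the forgetful functor from $\Rep^{\theta}(\G)^{\tilde{h}\Ctwo}$. The cocycle-calculus verifications you defer (that $p$ is a well-defined isometric algebra involution, $p$-invariance of $\nu_{(\hat{\theta},\lambda)}a$, the Klein condition, and the $\bb$-compatibilities) are exactly the computations the paper carries out in Lemmas \ref{lem:genGAct}, \ref{lem:crosscapHelp} and the surrounding propositions, so the plan is correct and matches the paper's proof.
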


We will prove Theorem \ref{thm:twistRealRepTFT} using the orientifold construction of Proposition \ref{prop:orientifoldTFT}. We take $(P^{(\hat{\theta},\lambda)},\Theta^{(\hat{\theta},\lambda)})$ for the duality structure on $\Brane = \Rep^{\theta}(\G)$ and $Q=\nu_{(\hat{\theta},\lambda)}$ for the candidate crosscap state. We compute
\[
\langle P(\phi) \rangle_{P(V)}
=
\frac{1}{\vert G \vert} \tr_{P(V)} \, P(\phi)
=
\frac{1}{\vert G \vert} \tr_{V^{\vee}} \, \phi^{\vee}
=
\langle \phi \rangle_V,
\]
which verifies the open sector assumption of Proposition \ref{prop:orientifoldTFT}. The remainder of the proof of Theorem \ref{thm:twistRealRepTFT} is divided into closed sector computations and verification of the open/closed coherence conditions required to apply Proposition \ref{prop:orientifoldTFT}.

The oriented open sector of $\TFT_{(\Gh,\hat{\theta},\lambda)}$ is the full theory $\TFT_{(\G,\theta)}$ precisely when the forgetful functor $\Rep^{\theta}(\G)^{\tilde{h}\Ctwo} \rightarrow \Rep^{\theta}(\G)$ is essentially surjective. By Corollary \ref{cor:FSInd}, this is the case when $\langle \chi_V, \nu_{(\hat{\theta},\lambda)} \rangle_{\G}=1$ for each irreducible $V \in \Rep^{\theta}(\G)$. Otherwise, the oriented open sector is a strict subtheory of $\TFT_{(\G,\theta)}$. In the context of Example \ref{ex:cyclicGroups}, the forgetful functor is essentially surjective only in subexample \eqref{ite:dihedralRealStr}.

\begin{Rem}
\label{rem:TypeIITwists}
We comment on the relation of $\TFT_{(\Gh, \hat{\theta},\lambda)}$ to categories of $D$-branes in orientifold string theory on global quotients. Recall that the spacetime of an orientifold string theory is an orbifold double cover $\pi: \mathcal{X} \rightarrow \hat{\mathcal{X}}$. Additional data required to define the theory includes the (gauge equivalence class of a) $B$-field $\check{B} \in \check{H}^{3 + \pi}(\hat{\mathcal{X}})$ \cite{distler2011b}, which is a class in the $\pi$-twisted differential cohomology of $\hat{\mathcal{X}}$, and a complex line bundle with connection $\check{L} \in \check{H}^2(\hat{\mathcal{X}})$ \cite[\S 8.4.1]{gao2011}. The underlying (oriented) orbifold string theory depends only on $(\mathcal{X}, \pi^* \check{B})$. Consider now the particular case in which the spacetime is a global quotient $\pi: X \git \G \rightarrow X \git \Gh$ associated to a finite $\Ctwo$-graded group $\Gh$ acting on a smooth manifold $X$. A special class of $B$-fields arises through the composition
\[
H^{2+\pi}(B \Gh) \rightarrow H^{2+\pi}(X \git \Gh) \hookrightarrow \check{H}^{3+\pi}(X \git \G),
\qquad
\hat{\theta} \mapsto \check{B}_{\hat{\theta}},
\]
where the first map is restriction along the the canonical morphism $X \git \Gh \rightarrow B \Gh$ and the second is the inclusion of flat $B$-fields. Similarly, a class $\lambda \in H^1(B \Gh)$ defines a flat line bundle $\check{L}_{\lambda} \in \check{H}^2(X \git \Gh)$. The pair $(\check{B}_{\hat{\theta}},\check{L}_{\lambda})$ can be seen as defining universal twists for global $\Gh$-orientifolds. The unoriented TFT $\TFT_{(\Gh, \hat{\theta},\lambda)}$ is a precise mathematical description of the affects of the twists $(\check{B}_{\hat{\theta}},\check{L}_{\lambda})$ on partition functions. See \cite{braun2002}, \cite[\S 5]{sharpe2011}, \cite[\S 4.5]{noohiYoung2022} for detailed discussions of these affects in the closed sector.
\end{Rem}

We return to the proof of Theorem \ref{thm:twistRealRepTFT}.

\subsubsection{Closed sector}

Denote by $\Aut^{\textnormal{gen}}(\C^{\theta^{-1}}[G])$ the group of algebra automorphisms and algebra anti-automorphisms of $\C^{\theta^{-1}}[G]$. The group $\Aut^{\textnormal{gen}}(\C^{\theta^{-1}}[G])$ is $\Ctwo$-graded by sending anti-automorphisms to $-1$.

\begin{Lem}
\label{lem:genGAct}
The function $p: \Gh \rightarrow \Aut^{\textnormal{gen}}(\C^{\theta^{-1}}[\G])$, $\omega \mapsto p^{\omega}$, where
\[
p^{\omega}(l_g) = \lambda(g)^{\frac{\pi(\omega)-1}{2}}\uptau_{\pi}^{\refl}(\hat{\theta})([\omega] g) l_{\omega g^{\pi(\omega)} \omega^{-1}},
\qquad
g \in \G
\]
is a $\Ctwo$-graded group homomorphism. Moreover, each $p^{\omega}$ is an isometry of $\langle - \rangle_G$.
\end{Lem}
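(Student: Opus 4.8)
The lemma combines three assertions, which I would prove in the following order: that each $p^{\omega}$ is a well-defined algebra automorphism (when $\pi(\omega)=1$) or anti-automorphism (when $\pi(\omega)=-1$) of $\C^{\theta^{-1}}[\G]$; that $\omega\mapsto p^{\omega}$ is multiplicative, so $p$ is a $\Ctwo$-graded homomorphism; and that each $p^{\omega}$ preserves $\langle-,-\rangle_{G}$. For the first point, observe that for fixed $\omega$ the Real conjugation $g\mapsto\omega g^{\pi(\omega)}\omega^{-1}$ is a bijection of $\G$ (an honest conjugation when $\pi(\omega)=1$, and conjugation composed with the inverse map when $\pi(\omega)=-1$), and the scalars $\lambda(g)^{(\pi(\omega)-1)/2}\,\uptau^{\refl}_{\pi}(\hat{\theta})([\omega]g)$ all lie in $U(1)$; hence $p^{\omega}$ is automatically a linear automorphism of the underlying vector space, and the only thing to verify is compatibility with the twisted multiplication. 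Since the inverse map reverses products, $g\mapsto\omega g^{\pi(\omega)}\omega^{-1}$ is a group homomorphism precisely when $\pi(\omega)=1$ and an anti-homomorphism precisely when $\pi(\omega)=-1$, so once multiplicativity with the scalars is in hand the parity of $p^{\omega}$ will automatically equal $\pi(\omega)$, giving the graded statement.

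The core computation is to apply $p^{\omega}$ to $l_{g_2}l_{g_1}=\theta([g_2\vert g_1])^{-1}l_{g_2g_1}$ and compare with $p^{\omega}(l_{g_2})\,p^{\omega}(l_{g_1})$ (and with $p^{\omega}(l_{g_1})\,p^{\omega}(l_{g_2})$ in the odd case), cancel the $\lambda$-factors using that $\lambda$ is a character, and observe that what remains is a single multiplicative identity among values of $\hat{\theta}$. When $\pi(\omega)=1$ this identity is literally \eqref{eq:2cocycleKey}. When $\pi(\omega)=-1$ I would deduce it from \eqref{eq:2cocycleKey}, applied after inverting the arguments $g_1,g_2$, together with a handful of applications of the ordinary $2$-cocycle relation for $\theta$; these are what absorb the correction factors $\hat{\theta}([g^{-1}\vert g])^{\pm1}$ built into $\uptau^{\refl}_{\pi}(\hat{\theta})$. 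I expect this odd case to be the main obstacle: one has to track the $\lambda$-powers, the $\hat{\theta}([g^{-1}\vert g])$-corrections and the order reversal coming from inversion simultaneously, and it is easy to lose a factor.

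For multiplicativity, $p^{e}=\id$ is immediate from the normalizations $\uptau^{\refl}_{\pi}(\hat{\theta})([e]g)=1$ and $\lambda(g)^{0}=1$. To prove $p^{\omega_2}\circ p^{\omega_1}=p^{\omega_2\omega_1}$ I would combine three inputs. First, the Real conjugation is a genuine left action of $\Gh$ on $\G$, so $\omega_2\cdot(\omega_1\cdot g)=(\omega_2\omega_1)\cdot g$. Second, $\uptau^{\refl}_{\pi}(\hat{\theta})$ is a $1$-cocycle on $B(\G\git_R\Gh)$---it is the image of the $2$-cocycle $\hat{\theta}$ under the cochain map $\uptau^{\refl}_{\pi}$ of \cite[Theorem 2.8]{noohiYoung2022}---and its cocycle condition along the composable pair $g\xrightarrow{\omega_1}\omega_1\cdot g\xrightarrow{\omega_2}(\omega_2\omega_1)\cdot g$ reads $\uptau^{\refl}_{\pi}(\hat{\theta})([\omega_2\omega_1]g)=\uptau^{\refl}_{\pi}(\hat{\theta})([\omega_2](\omega_1\cdot g))\,\uptau^{\refl}_{\pi}(\hat{\theta})([\omega_1]g)$. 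Third, $\lambda$ being a character gives $\lambda(\omega_1\cdot g)=\lambda(g)^{\pi(\omega_1)}$, so that the $\lambda$-exponents add as $\tfrac{\pi(\omega_1)-1}{2}+\pi(\omega_1)\cdot\tfrac{\pi(\omega_2)-1}{2}=\tfrac{\pi(\omega_1)\pi(\omega_2)-1}{2}$. These three facts make $p^{\omega_2}(p^{\omega_1}(l_g))$ and $p^{\omega_2\omega_1}(l_g)$ coincide on each basis element.

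For the isometry claim I would avoid a second direct cocycle computation by first recording the elementary fact that $\langle x,y\rangle_{G}$ equals $|\G|^{-1}$ times the coefficient of $l_{e}$ in the product $xy$; since $\langle-,-\rangle_{G}$ is symmetric, this functional $\varepsilon$ satisfies $\varepsilon(xy)=\varepsilon(yx)$. The Real conjugation fixes $e$ and, by the normalizations above, $p^{\omega}(l_{e})=l_{e}$ while $p^{\omega}(l_{g})$ has no $l_{e}$-component for $g\neq e$, so $\varepsilon\circ p^{\omega}=\varepsilon$. Combining with the first part: if $\pi(\omega)=1$ then $\langle p^{\omega}(x),p^{\omega}(y)\rangle_{G}=|\G|^{-1}\varepsilon\big(p^{\omega}(xy)\big)=\langle x,y\rangle_{G}$; if $\pi(\omega)=-1$ the same holds using $p^{\omega}(x)\,p^{\omega}(y)=p^{\omega}(yx)$ and the symmetry of $\varepsilon$. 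Hence each $p^{\omega}$ is an isometry of $\langle-,-\rangle_{G}$.
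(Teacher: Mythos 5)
Your proposal is correct and follows essentially the same route as the paper: the (anti)automorphism property is reduced to a $\hat{\theta}$-identity verified via equation \eqref{eq:2cocycleKey}, the homomorphism property comes from closedness of $\uptau_{\pi}^{\refl}(\hat{\theta})$ as a $1$-cocycle on $\G \git_R \Gh$ (with the $\lambda$-exponents combining as you note), and the isometry follows from $p^{\omega}$ preserving the counit $\langle - \rangle_{\G}$ on basis elements together with (anti)multiplicativity and symmetry of the pairing. Your spelling out of the last step, and of the $\lambda$-bookkeeping, is slightly more explicit than the paper's but is the same argument in substance.
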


\begin{proof}
We prove that $p^{\omega}$, $\omega \in \Gh \backslash \G$, is an anti-automorphism and omit the easier calculation that $p^g$, $g \in \G$, is an automorphism. For $g,h \in \G$, direct calculations give
\[
p^{\omega}(l_g \cdot l_h)
=
\frac{\uptau_{\pi}^{\refl}(\hat{\theta})([\omega]gh)}{\lambda(gh) \theta([g \vert h])} l_{\omega (gh)^{-1} \omega^{-1}}
\]
and
\[
p^{\omega}(l_h) \cdot p^{\omega}(l_g)
=
\frac{\uptau_{\pi}^{\refl}(\hat{\theta})([\omega]h) \uptau_{\pi}^{\refl}(\hat{\theta})([\omega]g)}{\lambda(g) \lambda(h) \theta([\omega h^{-1} \omega^{-1} \vert \omega g^{-1} \omega^{-1}])} l_{\omega  h^{-1} g^{-1} \omega^{-1}}.
\]
It therefore suffices to prove that
\[
\frac{\theta([g \vert h])}{\theta([\omega h^{-1} \omega^{-1} \vert \omega g^{-1} \omega^{-1}])}
=
\frac{\uptau_{\pi}^{\refl}(\hat{\theta})([\omega]gh)}{\uptau_{\pi}^{\refl}(\hat{\theta})([\omega]h) \uptau_{\pi}^{\refl}(\hat{\theta})([\omega]g)}.
\]
A short calculation using equation \eqref{eq:2cocycleKey} shows that this identity indeed holds.

That $p^{\omega}$ is an isometry follows from the equalities
\[
\langle p^{\omega}(l_g) \rangle_{\G}
=
\frac{1}{\vert \G \vert} \delta_{e,g} \frac{\uptau_{\pi}^{\refl}(\hat{\theta})}{\lambda(e)}([\omega] e)
=
\frac{\delta_{e,g}}{\vert \G \vert}
=
\langle l_g \rangle_{\G}.
\]

It remains to prove the homomorphism property, $p^{\omega_2} \circ p^{\omega_1} = p^{\omega_2 \omega_1}$. Recall from Section \ref{sec:RealProjRep} that $\uptau_{\pi}^{\refl}(\hat{\theta})$ is a $1$-cocycle on the groupoid $\G \git_R \Gh$ whose objects are elements of $\G$ and whose morphisms are $\omega: g \rightarrow \omega g^{\pi(\omega)} \omega^{-1}$, $\omega \in \Gh$. With this description, closedness of $\uptau_{\pi}^{\refl}(\hat{\theta})$ becomes the equalities
\[
\uptau_{\pi}^{\refl}(\hat{\theta})([\omega_2]\omega_1 g^{\pi(\omega_1)} \omega_1^{-1}) \uptau_{\pi}^{\refl}(\hat{\theta})([\omega_1]g) = \uptau_{\pi}^{\refl}(\hat{\theta})([\omega_2 \omega_1]g),
\qquad
g \in \G, \; \omega_i \in \Gh
\]
which are immediately seen to imply the homomorphism property.
\end{proof}

\begin{Prop}
\label{prop:algInvolution}
For each $\varsigma \in \Gh \backslash \G$, the map $p^{\varsigma}$ restricts to an algebra involution of $Z(\C^{\theta^{-1}}[\G])$. Moreover, this involution is independent of $\varsigma$.
\end{Prop}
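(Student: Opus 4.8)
The plan is to reduce everything to the single observation that for $g \in \G$ the automorphism $p^g$ of $\C^{\theta^{-1}}[\G]$ restricts to the identity on $Z := Z(\C^{\theta^{-1}}[\G])$; granting this, both assertions follow formally. First, by Lemma~\ref{lem:genGAct} the map $p^\varsigma$ is an algebra anti-automorphism of $\C^{\theta^{-1}}[\G]$, and it is bijective with inverse $p^{\varsigma^{-1}}$. A bijective anti-automorphism carries the centre onto the centre, and since $Z$ is commutative the restriction $p^\varsigma|_Z$ is automatically an algebra automorphism of $Z$. Hence it remains only to show that $p^\varsigma|_Z$ squares to $\id_Z$ and is independent of $\varsigma$.

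For the key observation I would identify, for each $g \in \G$, the automorphism $p^g$ with the inner automorphism $x \mapsto l_g x l_g^{-1}$ of $\C^{\theta^{-1}}[\G]$; inner automorphisms fix the centre pointwise, which yields $p^g|_Z = \id_Z$. Using $\pi(g) = 1$ and $\theta = \hat\theta|_{B\G}$ one has $p^g(l_h) = \uptau_{\pi}^{\refl}(\hat\theta)([g]h)\, l_{ghg^{-1}} = \frac{\theta([ghg^{-1} \vert g])}{\theta([g \vert h])}\, l_{ghg^{-1}}$, while a direct computation in $\C^{\theta^{-1}}[\G]$ gives $l_g l_h l_g^{-1} = \frac{\theta([g \vert g^{-1}])}{\theta([g \vert h])\, \theta([gh \vert g^{-1}])}\, l_{ghg^{-1}}$; the two coefficients agree by the $2$-cocycle identity for $\theta$ applied to the triple $(ghg^{-1}, g, g^{-1})$ together with normalization of $\theta$. (Alternatively one may avoid the inner-automorphism picture and check $p^g|_Z = \id_Z$ directly: for $z = \sum_h a_h l_h \in Z$ the defining relation $a_{khk^{-1}} = \uptau(\theta)([k]h)\, a_h$ combined with the $1$-cocycle property of the loop transgression $\uptau(\theta)$ on the loop groupoid of $B\G$ forces $p^g(z) = z$, using that $\uptau_{\pi}^{\refl}(\hat\theta)$ restricts to $\uptau(\theta)$ on morphisms arising from $\G$.)

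With this in hand, the two claims are immediate. The homomorphism property of Lemma~\ref{lem:genGAct} gives $p^\varsigma \circ p^\varsigma = p^{\varsigma^2}$, and $\varsigma^2 \in \ker\pi = \G$, so $(p^\varsigma|_Z)^2 = p^{\varsigma^2}|_Z = \id_Z$; hence $p^\varsigma|_Z$ is an involution. If $\varsigma' \in \Gh \setminus \G$ is another choice, then $g := \varsigma^{-1}\varsigma' \in \G$ and $\varsigma' = \varsigma g$, so $p^{\varsigma'} = p^\varsigma \circ p^g$ and therefore $p^{\varsigma'}|_Z = p^\varsigma|_Z \circ p^g|_Z = p^\varsigma|_Z$, which is the asserted independence.

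The only genuine computation is the cocycle manipulation identifying $p^g$ with conjugation by $l_g$ — equivalently, checking that the normalization of $\uptau_{\pi}^{\refl}(\hat\theta)$ is compatible with the twisting of $\C^{\theta^{-1}}[\G]$ — and this is the main, though only mildly technical, obstacle, being a single application of the $2$-cocycle condition. Everything else is formal: the content of the proposition is that $p$ descends to a well-defined involution of the centre precisely because its restriction to $\G$ acts by inner automorphisms and $\varsigma^2$ lies in $\G$.
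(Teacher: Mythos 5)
Your proof is correct and follows essentially the same route as the paper: the paper's argument rests on the observation that $\C^{\theta^{-1}}[\G]^{\G}=Z(\C^{\theta^{-1}}[\G])$, so the generalized $\Gh$-action of Lemma \ref{lem:genGAct} descends to an action of $\Ctwo\simeq\Gh/\G$ on the centre, which is exactly your reduction via $\varsigma^2\in\G$ and $\varsigma'=\varsigma g$. The only difference is cosmetic: you make explicit (via the cocycle identity for the triple $(ghg^{-1},g,g^{-1})$) that $p^g$ is conjugation by $l_g$, whereas the paper simply cites the explicit description of the centre, and these amount to the same key fact that $\G$ acts trivially on $Z(\C^{\theta^{-1}}[\G])$.
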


\begin{proof}
Using the explicit descriptions of the centre $Z(\C^{\theta^{-1}}[\G])$ from Section \ref{sec:twistRepThy} and the $\G$-action on $\C^{\theta^{-1}}[\G]$ from Lemma \ref{lem:genGAct} we see that $\C^{\theta^{-1}}[\G]^{\G}=Z(\C^{\theta^{-1}}[\G])$. It follows that the generalized $\Gh$-action on $\C^{\theta^{-1}}[\G]$ from Lemma \ref{lem:genGAct} induces an action of $\Ctwo \simeq \Gh \slash \G$ by algebra automorphisms on $Z(\C^{\theta^{-1}}[\G])$.
\end{proof}

Denote by $p$ the algebra involution of $Z(\C^{\theta^{-1}}[\G])$ induced by any $\varsigma \in \Gh \backslash \G$.

\begin{Rem}
Using functoriality of Hochschild homology and invariance under taking opposites, we form the composition
\begin{equation}
\label{eq:HHInv}
HH_{\bullet}(\Rep^{\theta}(\G)) \xrightarrow[]{\sim} HH_{\bullet}(\Rep^{\theta}(\G)^{\op}) \xrightarrow[]{HH_{\bullet}(P^{(\hat{\theta},\lambda)})} HH_{\bullet}(\Rep^{\theta}(\G)).
\end{equation}
Since $\Rep^{\theta}(\G)$ is finite semisimple, $HH_{\bullet}(\Rep^{\theta}(\G))$ is concentrated in degree zero, where is it isomorphic to $Z(\C^{\theta^{-1}}[\G])$. Under this isomorphism, the map \eqref{eq:HHInv} is $p$. The $+1$ (resp. $-1$) eigenspace of $p$ is then the involutive (resp. skew-involutive) Hochschild homology of $(\Rep^{\theta}(\G), P^{(\hat{\theta},\lambda)},\Theta^{(\hat{\theta},\lambda)})$. See \cite[Theorem 2.14]{braun2014} for an analogous result in the setting of strictly involutive $A_{\infty}$-algebras. The first part of Proposition \ref{prop:wittenIndices} therefore shows that the Klein condition computes the difference in dimensions of involutive and skew-involutive Hochschild homologies.
\end{Rem}

\begin{Prop}
\label{prop:crosscapInv}
The element $\nu_{(\hat{\theta},\lambda)} \in Z(\C^{\theta^{-1}}[\G])$ is $p$-invariant.
\end{Prop}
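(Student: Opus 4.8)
The plan is to verify directly that $p^{\varsigma}(\nu_{(\hat\theta,\lambda)}) = \nu_{(\hat\theta,\lambda)}$ for one chosen $\varsigma \in \Gh \setminus \G$, which suffices by Proposition \ref{prop:algInvolution}. Since $\pi(\varsigma) = -1$, Lemma \ref{lem:genGAct} gives $p^{\varsigma}(l_g) = \lambda(g)^{-1}\uptau^{\refl}_{\pi}(\hat\theta)([\varsigma]g)\, l_{\varsigma g^{-1}\varsigma^{-1}}$. Applying this termwise to $\nu_{(\hat\theta,\lambda)} = \sum_{\sigma \in \Gh \setminus \G}\frac{\lambda(\sigma)}{\hat\theta([\sigma \vert \sigma])}l_{\sigma^2}$, using that $\lambda$ is a character (so $\lambda(\sigma^2)^{-1} = \lambda(\sigma)^{-2}$) and the elementary identity $\varsigma\sigma^{-2}\varsigma^{-1} = (\varsigma\sigma^{-1}\varsigma^{-1})^2$, I would reindex the resulting sum along the bijection $\sigma \mapsto \tau := \varsigma\sigma^{-1}\varsigma^{-1}$ of $\Gh \setminus \G$, whose inverse is $\tau \mapsto \varsigma^{-1}\tau^{-1}\varsigma$. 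Since $\lambda(\varsigma^{-1}\tau^{-1}\varsigma) = \lambda(\tau)^{-1}$, the coefficient of $l_{\tau^2}$ in $p^{\varsigma}(\nu_{(\hat\theta,\lambda)})$ works out to $\lambda(\tau)\cdot \uptau^{\refl}_{\pi}(\hat\theta)([\varsigma]h^2)/\hat\theta([h \vert h])$ with $h := \varsigma^{-1}\tau^{-1}\varsigma$, so $p$-invariance of $\nu_{(\hat\theta,\lambda)}$ reduces to the scalar identity
\[
\uptau^{\refl}_{\pi}(\hat\theta)([\varsigma]h^2) = \frac{\hat\theta([h \vert h])}{\hat\theta([\varsigma h^{-1}\varsigma^{-1} \vert \varsigma h^{-1}\varsigma^{-1}])}, \qquad h \in \Gh \setminus \G .
\]

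Next I would expand both sides. The right-hand denominator is rewritten using equation \eqref{eq:oddConj} with $\omega = \varsigma$ and the odd element there taken to be $h^{-1}$, which (since $-\pi(\varsigma) = 1$) produces $\hat\theta([h^{-1} \vert h^{-1}])\,\hat\theta([\varsigma \vert h^{-2}])^{-1}\hat\theta([\varsigma h^{-2}\varsigma^{-1} \vert \varsigma])$; the left-hand side is expanded via the defining formula for $\uptau^{\refl}_{\pi}$, which for $\omega = \varsigma$, $g = h^2$ reads $\hat\theta([h^{-2} \vert h^2])^{-1}\hat\theta([\varsigma \vert h^{-2}])^{-1}\hat\theta([\varsigma h^{-2}\varsigma^{-1} \vert \varsigma])$. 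The ``mixed'' factor $\hat\theta([\varsigma \vert h^{-2}])^{-1}\hat\theta([\varsigma h^{-2}\varsigma^{-1} \vert \varsigma])$ occurs on both sides and cancels, leaving the purely cyclic identity
\[
\hat\theta([h^{-1} \vert h^{-1}]) = \hat\theta([h \vert h])\,\hat\theta([h^{-2} \vert h^2])
\]
involving only the single element $h$.

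Finally, this last identity follows from repeated use of the (normalized) $\pi$-twisted $2$-cocycle condition on $\hat\theta$, in exactly the manner in which \eqref{eq:2cocycleKey} and \eqref{eq:oddConj} were obtained: applying $d\hat\theta = 1$ to the triples $[h \vert h \vert h^{-1}]$, $[h^2 \vert h^{-1} \vert h^{-1}]$ and $[h^2 \vert h^{-2} \vert h^2]$ (using $\pi(h) = -1$ and $\pi(h^2) = 1$) and combining the three resulting relations yields $\hat\theta([h^{-1} \vert h^{-1}])/\hat\theta([h \vert h]) = \hat\theta([h^{2} \vert h^{-2}]) = \hat\theta([h^{-2} \vert h^2])$. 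I expect the bookkeeping in this last step --- choosing the right cocycle triples and tracking the $\pi$-twisted exponents --- to be the only mildly delicate point; everything before it is formal substitution and reindexing. (Alternatively one could note that $h$ generates a $\Ctwo$-graded cyclic subgroup of $\Gh$ and invoke the known structure of $H^{2+\pi}$ of such a group, but the direct cocycle computation is shorter.)
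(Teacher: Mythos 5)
Your overall route is the same as the paper's: apply $p^{\varsigma}$ to $\nu_{(\hat{\theta},\lambda)}$ termwise via Lemma \ref{lem:genGAct}, use \eqref{eq:oddConj} together with a cocycle identity in a single odd element, and reindex along $\sigma \mapsto \varsigma\sigma^{-1}\varsigma^{-1}$ (the paper reindexes at the end rather than the start). Your reduction of $p$-invariance to the scalar identity $\uptau^{\refl}_{\pi}(\hat{\theta})([\varsigma]h^{2}) = \hat{\theta}([h\vert h])\,\hat{\theta}([\varsigma h^{-1}\varsigma^{-1}\vert \varsigma h^{-1}\varsigma^{-1}])^{-1}$ is correct, and so is your expansion of its left-hand side. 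The gap is in the closing step: the ``purely cyclic'' relation you arrive at, $\hat{\theta}([h^{-1}\vert h^{-1}]) = \hat{\theta}([h\vert h])\,\hat{\theta}([h^{-2}\vert h^{2}])$, is false in general, and consequently the deferred cocycle bookkeeping cannot be completed. The relation that actually holds, and that the paper's ``short calculation'' uses, is
\[
\hat{\theta}([h\vert h])\,\hat{\theta}([h^{-1}\vert h^{-1}])\,\hat{\theta}([h^{-2}\vert h^{2}]) = 1 .
\]
Indeed, the very triples you list produce this: $[h\vert h\vert h^{-1}]$ and $[h^{2}\vert h^{-1}\vert h^{-1}]$ combine to give $\hat{\theta}([h^{-1}\vert h^{-1}])\,\hat{\theta}([h^{2}\vert h^{-2}]) = \hat{\theta}([h\vert h])^{-1}$, while $[h^{2}\vert h^{-2}\vert h^{2}]$ gives $\hat{\theta}([h^{2}\vert h^{-2}]) = \hat{\theta}([h^{-2}\vert h^{2}])$; they do not give $\hat{\theta}([h^{-1}\vert h^{-1}])/\hat{\theta}([h\vert h]) = \hat{\theta}([h^{2}\vert h^{-2}])$. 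A concrete check: on $\Gh = C_{4} = \langle h\rangle$ with $\pi(h) = -1$, take $\hat{\theta} = d\beta$ for a normalized $1$-cochain with $\beta(h^{2}) = w$; then $\hat{\theta}([h\vert h]) = \hat{\theta}([h^{-1}\vert h^{-1}]) = w^{-1}$ and $\hat{\theta}([h^{-2}\vert h^{2}]) = w^{2}$, so your relation reads $w^{-1} = w$ and fails for generic $w$, while the displayed one holds.

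The sign slip enters when you expand $\hat{\theta}([\varsigma h^{-1}\varsigma^{-1}\vert \varsigma h^{-1}\varsigma^{-1}])$ by \eqref{eq:oddConj}, and two separate issues conspire. First, \eqref{eq:oddConj} as printed appears to carry a typo: the same coboundary test shows the exponent in its denominator should be $\pi(\omega)$, not $-\pi(\omega)$, and the paper's own proof of this proposition applies it in the corrected form $\hat{\theta}([\varsigma\mu^{-1}\varsigma^{-1}\vert\varsigma\mu^{-1}\varsigma^{-1}]) = \hat{\theta}([\varsigma\vert\mu^{-2}])\,\hat{\theta}([\mu^{-1}\vert\mu^{-1}])^{-1}\,\hat{\theta}([\varsigma\mu^{-2}\varsigma^{-1}\vert\varsigma])^{-1}$. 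Second, your transcription of the lemma's output inverts the mixed ratio (you write $\hat{\theta}([\varsigma\vert h^{-2}])^{-1}\hat{\theta}([\varsigma h^{-2}\varsigma^{-1}\vert\varsigma])$ where either version of the lemma gives its reciprocal up to the $\hat{\theta}([h^{-1}\vert h^{-1}])$-factor), which is inconsistent with your subsequent claim that the mixed factor ``occurs on both sides and cancels'': with your transcription it occurs with opposite exponents on the two sides. Using the corrected \eqref{eq:oddConj}, the mixed factor does cancel, but what remains is the triple-product identity displayed above rather than yours; with those two repairs your argument closes and coincides with the paper's proof.
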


\begin{proof}
We have seen in Lemma \ref{lem:FSClassFun} that $\nu_{(\hat{\theta},\lambda)} \in Z(\C^{\theta^{-1}}[\G])$. For $p$-invariance, we have
\[
p^{\varsigma}(\nu_{(\hat{\theta},\lambda)})
=
\sum_{\mu \in \Gh \backslash \G}  \frac{\lambda(\mu) \uptau_{\pi}^{\refl}(\hat{\theta})([\varsigma] \mu^2)}{\lambda(\mu^2) \hat{\theta}([\mu \vert \mu])} l_{\varsigma \mu^{-2} \varsigma^{-1}}.
\]
Equation \eqref{eq:oddConj} gives
$
\hat{\theta}([\varsigma \mu^{-1} \varsigma^{-1} \vert \varsigma \mu^{-1} \varsigma^{-1}]) = \frac{\hat{\theta}([\varsigma \vert \mu^{-2}])}{\hat{\theta}([\mu^{-1} \vert \mu^{-1}]) \hat{\theta}([\varsigma \mu^{-2} \varsigma^{-1} \vert \varsigma])}
$
so that $p^{\varsigma}(\nu_{(\hat{\theta},\lambda)})$ is equal to
\begin{multline*}
\sum_{\mu \in \Gh \backslash \G} \lambda(\mu)^{-1} 
\frac{\hat{\theta}([\varsigma \mu^{-2} \varsigma^{-1} \vert \varsigma])}{\hat{\theta}([\mu \vert \mu]) \hat{\theta}([\mu^2 \vert \mu^{-2}]) \hat{\theta}([\varsigma \vert \mu^{-2}])} l_{\varsigma \mu^{-2} \varsigma^{-1}} \\
=
\sum_{\mu \in \Gh \backslash \G} \lambda(\mu)^{-1} \hat{\theta}([\mu \vert \mu])^{-1} \hat{\theta}([\mu^2 \vert \mu^{-2}])^{-1} \hat{\theta}([\mu^{-1} \vert \mu^{-1}])^{-1} \hat{\theta}([\varsigma \mu^{-1} \varsigma^{-1} \vert \varsigma \mu^{-1} \varsigma^{-1}])^{-1} l_{\varsigma \mu^{-2} \varsigma^{-1}}.
\end{multline*}
A short calculation shows that $\hat{\theta}([\mu^{-1} \vert \mu^{-1}]) \hat{\theta}([\mu \vert \mu]) \hat{\theta}([\mu^2 \vert \mu^{-2}]) =1$, whence
\[
p^{\varsigma}(\nu_{(\hat{\theta},\lambda)}) =
\sum_{\mu \in \Gh \backslash \G} \lambda(\mu^{-1}) \hat{\theta}([\varsigma \mu^{-1} \varsigma^{-1} \vert \varsigma \mu^{-1} \varsigma^{-1}])^{-1} l_{\varsigma \mu^{-2} \varsigma^{-1}} =\nu_{(\hat{\theta},\lambda)}. \qedhere
\]
\end{proof}

\begin{Lem}
\label{lem:crosscapHelp}
The following equality holds for all $g \in \G$ and $\mu \in \Gh \backslash \G$:
\[
\hat{\theta}([\mu \vert \mu])^{-1} l_{\mu^2} \cdot a_g l_g = \lambda(g) p^{\mu}(a_g l_g) \cdot \hat{\theta}([\mu g \vert \mu g])^{-1} l_{(\mu g)^2}.
\]
\end{Lem}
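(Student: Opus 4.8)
The claimed identity is an equality of elements of $\C^{\theta^{-1}}[\G]$, and since both sides are scalar multiples of the single basis vector $l_{\mu^2 g}$ on the left and $l_{\mu g^{-1}\omega^{-1}\cdots}$—wait, one must first check that both sides are supported on the same group element, then compare scalar coefficients. So the plan is: expand each side using the multiplication rule $l_x\cdot l_y=\theta([x\vert y])l_{xy}$ and the explicit formula $p^{\mu}(l_g)=\lambda(g)^{-1}\uptau_{\pi}^{\refl}(\hat\theta)([\mu]g)\, l_{\mu g^{-1}\mu^{-1}}$ from Lemma \ref{lem:genGAct} (using $\pi(\mu)=-1$), reduce the desired equality to a single scalar identity among values of $\hat\theta$, and verify that identity via repeated application of the $2$-cocycle condition on $\hat\theta$, with equations \eqref{eq:2cocycleKey} and \eqref{eq:oddConj} doing the bookkeeping.

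\textbf{Step 1: identify the supporting group element.} On the left, $l_{\mu^2}\cdot a_g l_g = a_g\,\theta([\mu^2\vert g])\, l_{\mu^2 g}$. On the right, $p^{\mu}(a_g l_g)$ is supported on $\mu g^{-1}\mu^{-1}$ (since $\pi(\mu)=-1$), so $p^{\mu}(a_g l_g)\cdot l_{(\mu g)^2}$ is supported on $\mu g^{-1}\mu^{-1}\cdot \mu g\mu g = \mu g^{-1}\cdot g\mu g = \mu\cdot\mu g = \mu^2 g$. Good—both sides sit on $l_{\mu^2 g}$, so the identity is purely about scalars.

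\textbf{Step 2: extract the scalar identity.} Cancelling the common factor $a_g$ and the common basis vector $l_{\mu^2 g}$, and using $\uptau_{\pi}^{\refl}(\hat\theta)([\mu]g)=\theta([g^{-1}\vert g])^{-1}\frac{\hat\theta([\mu g^{-1}\mu^{-1}\vert \mu])}{\hat\theta([\mu\vert g^{-1}])}$, the claim becomes an equality of products of $\hat\theta$-values at explicit pairs drawn from $\{\mu, g, \mu^2, \mu g, g^{-1}, \dots\}$. The factor $\lambda(g)$ on the right and the $\lambda$-dependence of $p^{\mu}$ cancel cleanly against each other (the $\lambda(g)^{-1}$ from $p^{\mu}(l_g)$ meets the explicit $\lambda(g)$), so $\lambda$ plays no essential role and one is left with a pure $2$-cocycle identity.

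\textbf{Step 3: verify the cocycle identity.} This is the only real work. The strategy is to repeatedly expand $\hat\theta([x\vert yz])$ and $\hat\theta([xy\vert z])$ using the $2$-cocycle relation $d\hat\theta=0$, i.e. $\hat\theta([y\vert z])^{\pi(x)}\hat\theta([x\vert yz])=\hat\theta([xy\vert z])\hat\theta([x\vert y])$, to rewrite everything in terms of a normal form; equations \eqref{eq:2cocycleKey} and \eqref{eq:oddConj} are packaged consequences that handle the conjugation terms $\hat\theta([\mu g^{\pm}\mu^{-1}\vert\mu])/\hat\theta([\mu\vert g^{\pm}])$ efficiently. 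I expect the cleanest route is to transform both sides into expressions in $\hat\theta([\mu\vert\mu])$, $\hat\theta([\mu g\vert \mu g])$, $\hat\theta([\mu\vert g^{-1}])$ and $\theta$-values, then match. \textbf{The main obstacle is purely computational}: keeping track of the many $\hat\theta$-factors and their exponents (note the $\pi(\mu)=-1$ sign twists) without sign or inversion errors; there is no conceptual difficulty, and the lemma's own hint ("both equalities follow from repeated use of the $2$-cocycle condition") applies equally here. Once the scalar identity is confirmed, the proof is complete; this lemma is then exactly what is needed to run the closed-sector crosscap computation (the Klein condition $Q^2=\sum_i p(a^i)a_i$) in the steps that follow.
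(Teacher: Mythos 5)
Your plan is the same as the paper's (its proof is the single remark that the identity follows from the twisted $2$-cocycle condition), and Steps 1 and 2 are right in outline: both sides are supported on $l_{\mu^2 g}$ and the $\lambda$-factors cancel. The concrete problem is the multiplication rule you use. The products in this lemma are taken in $\C^{\theta^{-1}}[\G]$ --- that is where $\nu_{(\hat{\theta},\lambda)}$, the central element $\sum_g a_g l_g$ and the operators $p^{\mu}$ of Lemma \ref{lem:genGAct} live --- so $l_x \cdot l_y = \hat{\theta}([x \vert y])^{-1} l_{xy}$, not $\hat{\theta}([x \vert y])\, l_{xy}$ as in your Step 1. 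This is not a cosmetic convention: with your rule the scalar identity you would set out to verify in Step 3 has $\hat{\theta}([\mu^2 \vert g])$ and $\hat{\theta}([\mu g^{-1}\mu^{-1} \vert (\mu g)^2])$ on the wrong sides of the equality; it differs from the correct identity exactly by the extra requirement $\hat{\theta}([\mu^2 \vert g])^2 = \hat{\theta}([\mu g^{-1}\mu^{-1} \vert (\mu g)^2])^2$, which the cocycle condition does not supply, so the computation as set up would not close.

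With the correct multiplication, cancelling $a_g$ and $\lambda(g)$ reduces the lemma to the single identity
\[
\hat{\theta}([g^{-1} \vert g])\, \hat{\theta}([\mu \vert g^{-1}])\, \hat{\theta}([\mu g \vert \mu g])\, \hat{\theta}([\mu g^{-1}\mu^{-1} \vert (\mu g)^2])
=
\hat{\theta}([\mu \vert \mu])\, \hat{\theta}([\mu^2 \vert g])\, \hat{\theta}([\mu g^{-1}\mu^{-1} \vert \mu]),
\]
and this does follow from the $\pi$-twisted cocycle condition: applying it at the triples $(\mu g^{-1}\mu^{-1}, \mu g, \mu g)$, $(\mu g^{-1}\mu^{-1}, \mu, g)$ and $(\mu, \mu, g)$ and combining gives
$\hat{\theta}([\mu g \vert \mu g])\, \hat{\theta}([\mu g^{-1}\mu^{-1} \vert (\mu g)^2]) = \hat{\theta}([\mu g^{-1}\mu^{-1} \vert \mu])\, \hat{\theta}([\mu g^{-1} \vert g])\, \hat{\theta}([\mu \vert \mu])\, \hat{\theta}([\mu^2 \vert g])$,
while the triple $(\mu, g^{-1}, g)$ together with the normalization $\hat{\theta}([\mu \vert e])=1$ gives $\hat{\theta}([g^{-1} \vert g])\, \hat{\theta}([\mu \vert g^{-1}])\, \hat{\theta}([\mu g^{-1} \vert g]) = 1$; the two combine to the display. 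Since this scalar identity is the entire content of the lemma, your write-up should fix the inverses and record such a chain of cocycle applications --- your Step 3 as stated only promises the computation, which matches the paper's terseness but leaves the one substantive step undone and, with the inverted rule, aimed at the wrong target.
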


\begin{proof}
This can be verified directly from the twisted $2$-cocycle condition on $\hat{\theta}$.
\end{proof}

\begin{Prop}
The equality $p(\nu_{(\hat{\theta},\lambda)} f) = \nu_{(\hat{\theta},\lambda)} f$ holds for all $f \in Z(\C^{\theta^{-1}}[\G])$.
\end{Prop}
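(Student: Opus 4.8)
The plan is to first upgrade the assertion to the identity $\nu_{(\hat{\theta},\lambda)} f = \nu_{(\hat{\theta},\lambda)}\, p(f)$, valid for every $f \in Z(\C^{\theta^{-1}}[\G])$, and then combine it with the $p$-invariance of $\nu_{(\hat{\theta},\lambda)}$ proved in Proposition \ref{prop:crosscapInv}. Write $\nu = \nu_{(\hat{\theta},\lambda)}$ and $f = \sum_{g \in \G} a_g l_g$. Multiplying the identity of Lemma \ref{lem:crosscapHelp} by $\lambda(\mu)$ and using that $\lambda$ is a character, so $\lambda(\mu)\lambda(g) = \lambda(\mu g)$, I would rewrite it as
\[
\frac{\lambda(\mu)}{\hat{\theta}([\mu \vert \mu])}\, l_{\mu^2}\cdot a_g l_g
=
p^{\mu}(a_g l_g)\cdot \frac{\lambda(\mu g)}{\hat{\theta}([\mu g \vert \mu g])}\, l_{(\mu g)^2},
\qquad \mu \in \Gh \setminus \G,\ g \in \G,
\]
and then sum over both $g \in \G$ and $\mu \in \Gh \setminus \G$. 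The left-hand side collapses to $\nu f$. On the right-hand side, for each fixed $g$ the substitution $\varsigma = \mu g$ runs bijectively over $\Gh \setminus \G$, and the homomorphism property $p^{\varsigma g^{-1}} = p^{\varsigma} \circ p^{g^{-1}}$ of Lemma \ref{lem:genGAct} lets me pull $p^{\varsigma}$ out of the inner sum, giving
\[
\nu f
=
\sum_{\varsigma \in \Gh \setminus \G} \frac{\lambda(\varsigma)}{\hat{\theta}([\varsigma \vert \varsigma])}\; p^{\varsigma}\Big( \sum_{g \in \G} a_g\, p^{g^{-1}}(l_g) \Big)\, l_{\varsigma^2}.
\]

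The crux is the identity $\sum_{g \in \G} a_g\, p^{g^{-1}}(l_g) = f$. By Lemma \ref{lem:genGAct}, $p^{g^{-1}}(l_g) = \uptau_{\pi}^{\refl}(\hat{\theta})([g^{-1}]g)\, l_g$, and unwinding the definition gives $\uptau_{\pi}^{\refl}(\hat{\theta})([g^{-1}]g) = \hat{\theta}([g \vert g^{-1}]) / \hat{\theta}([g^{-1} \vert g]) = \uptau(\theta)([g^{-1}]g)$; on the other hand, the description of $Z(\C^{\theta^{-1}}[\G])$ from Section \ref{sec:twistRepThy}, specialized to $h = g^{-1}$, forces $a_g = \uptau(\theta)([g^{-1}]g)\, a_g$. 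Hence $\uptau_{\pi}^{\refl}(\hat{\theta})([g^{-1}]g) = 1$ whenever $a_g \neq 0$, so each summand $a_g\, p^{g^{-1}}(l_g)$ equals $a_g l_g$. Since $f$ is central, Proposition \ref{prop:algInvolution} gives $p^{\varsigma}(f) = p(f)$ independently of $\varsigma$, and the displayed identity reduces, using commutativity of $Z(\C^{\theta^{-1}}[\G])$, to $\nu f = p(f)\, \nu = \nu\, p(f)$.

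Finally, $\nu f \in Z(\C^{\theta^{-1}}[\G])$ and $p$ is an algebra involution of this algebra, so $p(\nu f) = p(\nu)\, p(f) = \nu\, p(f) = \nu f$, the middle equality being Proposition \ref{prop:crosscapInv} and the last one the identity just obtained. The one genuinely delicate point is the identity $\sum_{g} a_g\, p^{g^{-1}}(l_g) = f$: it rests on recognizing that the orientation-twisted loop transgression $\uptau_{\pi}^{\refl}(\hat{\theta})$ agrees, on arguments of the form $([g^{-1}]g)$, with the ordinary loop transgression $\uptau(\theta)$ that controls centrality in $\C^{\theta^{-1}}[\G]$. Everything else is bookkeeping with the reindexing $\varsigma = \mu g$ and the definition of $\nu$.
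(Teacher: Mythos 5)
Your proof is correct, and it is organized differently from the paper's. Both arguments start from Lemma \ref{lem:crosscapHelp} summed over $g \in \G$ and $\mu \in \Gh \setminus \G$, but from there the paper applies $p^{\varsigma}$ directly to the resulting expansion of $\nu_{(\hat{\theta},\lambda)} f$ and grinds through a chain of cocycle manipulations (using the anti-homomorphism property, Proposition \ref{prop:crosscapInv} termwise, and centrality of $f$), recovering $\nu_{(\hat{\theta},\lambda)} f$ only after the substitution $\eta = \varsigma g^{-1}\mu^{-1}\varsigma^{-1}$, $h = \varsigma \mu g (\varsigma\mu)^{-1}$. You instead never apply $p$ to the product until the very end: the reindexing $\varsigma = \mu g$ together with $p^{\varsigma g^{-1}} = p^{\varsigma} \circ p^{g^{-1}}$ from Lemma \ref{lem:genGAct} turns the same expansion into the intertwining identity $\nu_{(\hat{\theta},\lambda)} f = \nu_{(\hat{\theta},\lambda)}\, p(f)$, and the proposition then follows formally from multiplicativity of $p$ on $Z(\C^{\theta^{-1}}[\G])$ (Proposition \ref{prop:algInvolution}) and $p(\nu_{(\hat{\theta},\lambda)}) = \nu_{(\hat{\theta},\lambda)}$ (Proposition \ref{prop:crosscapInv}). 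Your crux computation is sound: with $\omega = g^{-1}$ even, $\uptau_{\pi}^{\refl}(\hat{\theta})([g^{-1}]g) = \theta([g \vert g^{-1}])/\theta([g^{-1} \vert g])$, and this is in fact identically $1$ for normalized cocycles (apply the cocycle identity to $(g, g^{-1}, g)$), so you do not even need centrality of $f$ at that point; centrality enters only through $p^{\varsigma}(f) = p(f)$ and the final formal step. (Minor bookkeeping note: for $Z(\C^{\theta^{-1}}[\G])$ the equivariance is $a_{hgh^{-1}} = \uptau(\theta)([h]g)\, a_g$, the inverse of the displayed convention for $\C^{\theta}[\G]$, but your conclusion is unaffected either way.) What your route buys is a cleaner factorization — the reusable statement that $\nu_{(\hat{\theta},\lambda)}$ annihilates the skew part of the centre — at the cost of invoking the algebra-involution structure on the centre; the paper's route is a single self-contained, if longer, cocycle computation.
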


\begin{proof}
Write $\sum_{g \in \G} a_g l_g$ for $f \in Z(\C^{\theta^{-1}}[\G])$. Lemma \ref{lem:crosscapHelp} gives
\[
\nu_{(\hat{\theta},\lambda)} \sum_{g \in \G} a_g l_g
=
\sum_{\substack{g \in \G \\ \mu \in \Gh \backslash \G}} \lambda(\mu g) p^{\mu}(a_g l_g) \hat{\theta}([\mu g \vert \mu g])^{-1} l_{(\mu g)^2}
\]
from which we find that $p^{\varsigma}(\nu_{(\hat{\theta},\lambda)} \sum_g a_g l_g)$ is equal to
\begin{eqnarray*}
&&
\sum_{\substack{g \in \G \\ \mu \in \Gh \backslash \G}} \lambda(\mu g) p^{\varsigma} (p^{\mu}(a_g l_g) \cdot \hat{\theta}([\mu g \vert \mu g])^{-1} l_{(\mu g)^2}) \\
&=&
\sum_{g, \mu} \lambda(\mu g) p^{\varsigma}(\hat{\theta}([\mu g \vert \mu g])^{-1} l_{(\mu g)^2}) p^{\varsigma} p^{\mu}(a_g l_g) \\
&=&
\sum_{g, \mu} \lambda(\mu g)^{-1} \hat{\theta}([\varsigma g^{-1} \mu^{-1} \varsigma^{-1} \vert \varsigma g^{-1} \mu^{-1} \varsigma^{-1}])^{-1} l_{\varsigma (g^{-1} \mu^{-1})^2 \varsigma^{-1}} p^{\varsigma} p^{\mu}(a_g l_g) \\
&=&
\sum_{g, \mu} \lambda(\mu g) \hat{\theta}([\varsigma g^{-1} \mu^{-1} \varsigma^{-1} \vert \varsigma g^{-1} \mu^{-1} \varsigma^{-1}])^{-1} l_{\varsigma (g^{-1} \mu^{-1})^2 \varsigma^{-1}} \uptau_{\pi}^{\refl}(\hat{\theta})([\varsigma \mu]g)^{-1} a_g l_{\varsigma \mu g (\varsigma \mu)^{-1}} \\
&=&
\sum_{g, \mu} \lambda(\mu g) \hat{\theta}([\varsigma g^{-1} \mu^{-1} \varsigma^{-1} \vert \varsigma g^{-1} \mu^{-1} \varsigma^{-1}])^{-1} l_{\varsigma (g^{-1} \mu^{-1})^2 \varsigma^{-1}} a_{\varsigma \mu g (\varsigma \mu)^{-1}} l_{\varsigma \mu g (\varsigma \mu)^{-1}} \\
&=&
\sum_{\substack{h \in \G \\ \eta \in \Gh \backslash \G}} \frac{\lambda(\eta)}{\hat{\theta}([ \eta \vert \eta])} l_{\eta^2} a_h l_h,
\end{eqnarray*}
which is $\nu_{(\hat{\theta},\lambda)} \sum_{h \in \G} a_h l_h$. The first equality follows from the fact that $p^{\varsigma}$ is an anti-homomorphism (Lemma \ref{lem:genGAct}), the second from Proposition \ref{prop:crosscapInv}, the third from Lemma \ref{lem:genGAct} and the definition of $p$, the fourth from the assumed centrality of $\sum_g a_g l_g$ and the fifth from the change of variables $\eta = \varsigma g^{-1} \mu^{-1} \varsigma^{-1}$ and $h = \varsigma \mu g \mu^{-1} \varsigma^{-1}$.
\end{proof}

Recall that a conjugacy class $\mathcal{O} \subset \G$ is called \emph{$\theta$-regular} if $\frac{\theta([g \vert h])}{\theta([h \vert g])} =1$ for all $g \in \mathcal{O}$ and $h \in C_{\G}(g)$.

\begin{Prop}
The Klein condition holds.
\end{Prop}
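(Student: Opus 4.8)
The plan is to deduce the Klein condition $\nu_{(\hat{\theta},\lambda)}^{2} = \sum_i p(a^i) a_i$ from non-degeneracy of the Frobenius pairing together with a computation of both sides in the Wedderburn decomposition of $A = Z(\C^{\theta^{-1}}[\G])$. Writing $\langle - \rangle_\G$ for the trace $\langle z \rangle_\G = \langle z, l_e \rangle_\G$, so that $A$ is Frobenius with $\langle xy \rangle_\G = \langle x, y \rangle_\G$, non-degeneracy of $\langle -, - \rangle_\G$ reduces the Klein condition to the scalar identity
\[
\langle \nu_{(\hat{\theta},\lambda)}^{2}\, f \rangle_\G = \big\langle \big( \textstyle\sum_i p(a^i) a_i \big) f \big\rangle_\G, \qquad f \in A,
\]
the dual bases $\{a_i\}$, $\{a^i\}$ being taken with respect to $\langle -, - \rangle_\G$.

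For the right-hand side I would use that $p$ is an algebra automorphism of $A$ (Proposition \ref{prop:algInvolution}) and an isometry of $\langle -, - \rangle_\G$ (Lemma \ref{lem:genGAct}), so $\langle p(x), y \rangle_\G = \langle x, p(y) \rangle_\G$; combined with the elementary identity $\sum_i \langle a^i, S(a_i) \rangle_\G = \tr_A S$ valid for any linear operator $S$ on $A$, this gives
\[
\big\langle \big( \textstyle\sum_i p(a^i) a_i \big) f \big\rangle_\G = \sum_i \langle p(a^i), a_i f \rangle_\G = \sum_i \langle a^i, p(a_i) p(f) \rangle_\G = \tr_A(S_f),
\]
where $S_f(x) = p(x) p(f) = p(xf)$ is the composite of multiplication by $f$ with $p$. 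In the decomposition $A = \prod_{V \in \Irr^\theta(\G)} \C e_V$ into minimal idempotents, multiplication by $f$ acts on $\C e_V$ by a scalar $f_V$, while $p$ permutes the idempotents; by the Remark identifying the involution on $HH_\bullet(\Rep^\theta(\G))$ with the action of $P^{(\hat{\theta},\lambda)}$, together with functoriality of $HH_\bullet$, one has $p(e_V) = e_{P^{(\hat{\theta},\lambda)}(V)}$. Hence $\tr_A(S_f) = \sum_{V\,:\,P^{(\hat{\theta},\lambda)}(V) \simeq V} f_V$.

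For the left-hand side I would invoke Corollary \ref{cor:FSIndDecomp}, which writes $\nu_{(\hat{\theta},\lambda)} = \sum_{V\,:\,P^{(\hat{\theta},\lambda)}(V) \simeq V} \langle \chi_V, \nu_{(\hat{\theta},\lambda)} \rangle_\G \, \chi_V$, the coefficients being $\pm 1$ by Corollary \ref{cor:FSInd}. Each $\chi_V$ is a nonzero multiple of the minimal idempotent of its block and distinct twisted characters lie in distinct blocks, so the cross terms in $\nu_{(\hat{\theta},\lambda)}^{2}$ vanish; orthonormality of $\{\chi_V\}_{V \in \Irr^\theta(\G)}$ with respect to $\langle -, - \rangle_\G$ then gives $\langle \nu_{(\hat{\theta},\lambda)}^{2}\, f \rangle_\G = \sum_{V\,:\,P^{(\hat{\theta},\lambda)}(V) \simeq V} f_V$, which matches the right-hand side. (Specializing $f = l_e$ recovers $\langle Q^2 \rangle_\G = \tr_A p$ of Proposition \ref{prop:wittenIndices}.)

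The step requiring care — and the only real obstacle — is keeping the identifications straight: between $HH_\bullet(\Rep^\theta(\G))$ and $Z(\C^{\theta^{-1}}[\G])$, between minimal idempotents and isomorphism classes of simples (one must decide whether a block is indexed by $V$ or by $V^\vee$ and check that $P^{(\hat{\theta},\lambda)}(V) \simeq V$ is the self-duality condition on both sides), and between $\chi_V$ and $e_V$, whose proportionality constant is pinned down by $\langle \chi_V, \chi_V \rangle_\G = 1$. An alternative, staying entirely inside the paper's calculus of twisted cochains, is to expand $\nu_{(\hat{\theta},\lambda)}^{2}$ and $\sum_i p(a^i) a_i$ in the basis of $Z(\C^{\theta^{-1}}[\G])$ given by $\theta$-regular conjugacy class sums and compare coefficient by coefficient using the twisted $2$-cocycle identity on $\hat{\theta}$, as in Lemma \ref{lem:crosscapHelp} and the preceding proposition; this is more computational but avoids the categorical input.
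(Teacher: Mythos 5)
Your argument is essentially correct, but it takes a genuinely different route from the paper. The paper proves the Klein condition entirely inside the cochain calculus: it expands $\sum_{\mathcal{O}} l_{\mathcal{O}}\, p^{\varsigma}(l_{\mathcal{O}}^{\vee})$ in the basis of $\theta$-regular conjugacy class sums, pushes the coefficients through the twisted $2$-cocycle identities and the closedness of $\uptau^{\refl}_{\pi}(\hat{\theta})$, and after the substitution $\mu = g\varsigma t$, $\xi = t^{-1}\varsigma^{-1}$ recognizes the result as $\nu_{(\hat{\theta},\lambda)}^2$ -- this is precisely the ``alternative'' you sketch in your last sentence. Your route instead leverages the Frobenius--Schur machinery already established (Corollaries \ref{cor:FSInd} and \ref{cor:FSIndDecomp}) together with the block decomposition of the semisimple algebra $Z(\C^{\theta^{-1}}[\G])$, reducing the whole identity to bookkeeping of minimal idempotents. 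What your approach buys is conceptual transparency (both sides become $\sum_{V:\, P^{(\hat{\theta},\lambda)}(V)\simeq V} f_V$, and the Klein condition is visibly a refinement of $\langle Q^2\rangle_0 = \tr_A\, p$ from Proposition \ref{prop:wittenIndices}); what the paper's computation buys is independence from the character-theoretic results and from any claim about how $p$ interacts with the block structure.

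The one step you cannot get from the sources you cite is the key identity $p(e_{V^\vee}) = e_{P^{(\hat{\theta},\lambda)}(V)^\vee}$, equivalently $p(\chi_V) = \chi_{P^{(\hat{\theta},\lambda)}(V)}$. The Remark on Hochschild homology asserts an identification but does not prove how characters or idempotents transform, so as written this is a gap in rigor (which you flag, to your credit). It is, however, easily filled: either by a short direct computation -- expand $p^{\varsigma}(\chi_V)$, change variables $g \mapsto \varsigma g^{-1}\varsigma^{-1}$, and use closedness of $\uptau^{\refl}_{\pi}(\hat{\theta})$ together with the twisted class function property $\chi_V(\varsigma^2 k \varsigma^{-2}) = \uptau(\theta)([\varsigma^2]k)\chi_V(k)$ to land exactly on $\chi_{P^{(\hat{\theta},\lambda)}(V)}$ -- or by invoking $p \circ \bb^V = \bb^{P^{(\hat{\theta},\lambda)}(V)}\circ P^{(\hat{\theta},\lambda)}$ applied to $\id_V$ (using $\bb^V(\id_V)=\chi_V$); that compatibility is proved later in the paper but independently of the Klein condition, so there is no circularity. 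The remaining ingredients you use (non-degeneracy of $\langle-,-\rangle_\G$ on the centre via the orthonormal character basis, self-adjointness of $p$ from Lemma \ref{lem:genGAct} and Proposition \ref{prop:algInvolution}, vanishing of cross terms because distinct irreducible characters are supported in distinct blocks, and the squares of the indicators being $1$ exactly when $P^{(\hat{\theta},\lambda)}(V)\simeq V$) are all sound.
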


\begin{proof}
The vector space $Z(\C^{\theta^{-1}}[\G])$ has a basis $\{ l_{\mathcal{O}} \}_{\mathcal{O}}$ labelled by $\theta$-regular conjugacy classes of $\G$. For convenience, set $l_{\mathcal{O}}=0$ if $\mathcal{O}$ is not $\theta$-regular. Writing $l_{\mathcal{O}} = \sum_{g \in \mathcal{O}} a_g l_g$ and $l_{\mathcal{O}^{-1}} = \sum_{h \in \mathcal{O}} b_{h^{-1}} l_{h^{-1}}$, we have
$
\langle l_{\mathcal{O}}, l_{\mathcal{O}^{-1}} \rangle_{\G}
=
\frac{1}{\vert \G \vert} \sum_{g \in \mathcal{O}}\theta([g \vert g^{-1}])^{-1} a_g b_{g^{-1}}.
$
Centrality of $l_{\mathcal{O}^{\pm 1}}$ implies that the function $\mathcal{O} \rightarrow \C$, $g \mapsto \theta([g \vert g^{-1}])^{-1} a_g b_{g^{-1}}$, is constant; denote its (necessarily non-zero) value by $c_{\mathcal{O}}$. We also have $\langle l_{\mathcal{O}}, l_{\mathcal{O}^{\prime}} \rangle_{\G} = 0$ if $\mathcal{O}^{\prime} \neq \mathcal{O}^{-1}$. It follows that $l_{\mathcal{O}}^{\vee} = \frac{\vert \G \vert}{c_{\mathcal{O}} \vert \mathcal{O} \vert} l_{\mathcal{O}^{-1}}$. With this notation, the right hand side of the Klein condition is $R := \sum_{\mathcal{O} \in \pi_0(\G \git \G)} l_{\mathcal{O}} p^{\varsigma}(l_{\mathcal{O}}^{\vee})$. We compute
\begin{eqnarray*}
R
&=&
\vert \G \vert \sum_{\mathcal{O} \in \pi_0(\G \git \G)} \sum_{g, h \in \mathcal{O}} \frac{a_g l_g  p^{\varsigma}(b_{h^{-1}} l_{h^{-1}})}{ c_{\mathcal{O}} \vert \mathcal{O} \vert} \\
&=&
\vert \G \vert \sum_{\mathcal{O} \in \pi_0(\G \git \G)} \sum_{g, h \in \mathcal{O}} \frac{a_g b_{h^{-1}}\lambda(h) \uptau_{\pi}^{\refl}(\hat{\theta})([\varsigma] h^{-1}) l_{g\varsigma h \varsigma^{-1}}}{c_{\mathcal{O}} \vert \mathcal{O} \vert \hat{\theta}([g \vert \varsigma h \varsigma^{-1}])} \\
&=&
\sum_{\mathcal{O} \in \pi_0(\G \git \G)} \sum_{g \in \mathcal{O}} \sum_{t \in \G} \frac{a_g b_{t g^{-1} t^{-1}}\lambda(g) \uptau_{\pi}^{\refl}(\hat{\theta})([\varsigma] t g^{-1} t^{-1}) l_{g\varsigma t g t^{-1} \varsigma^{-1}}}{c_{\mathcal{O}} \hat{\theta}([g \vert \varsigma t g t^{-1} \varsigma^{-1}])} \\
&=&
\sum_{g, t \in \G} \frac{a_g b_{t g^{-1} t^{-1}}}{ c_{\mathcal{O}}} \lambda(g) \frac{\uptau_{\pi}^{\refl}(\hat{\theta})([\varsigma] t g^{-1} t^{-1})}{\hat{\theta}([g \vert \varsigma t g t^{-1} \varsigma^{-1}])} l_{g\varsigma t g t^{-1} \varsigma^{-1}}.
\end{eqnarray*}
Above we have set $h = t g t^{-1}$. As $b_{t g^{-1} t^{-1}} = \uptau(\theta)([t]g^{-1}) b_{g^{-1}}$, we can write
\begin{eqnarray*}
R
&=&
\sum_{g, t \in \G} \frac{a_g b_{g^{-1}}}{ c_{\mathcal{O}}} \lambda(g) \frac{\uptau(\theta)([t]g^{-1}) \uptau_{\pi}^{\refl}(\hat{\theta})([\varsigma] t g^{-1} t^{-1})}{\hat{\theta}([g \vert \varsigma t g t^{-1} \varsigma^{-1}])} l_{g\varsigma t g^{-1} t^{-1} \varsigma^{-1}} \\
&=&
\sum_{g, t \in \G} \lambda(g) \frac{\theta([g \vert g^{-1}])}{\hat{\theta}([g \vert \varsigma t g t^{-1} \varsigma^{-1}])} \uptau(\theta)([t]g^{-1}) \uptau_{\pi}^{\refl}(\hat{\theta})([\varsigma] t g^{-1} t^{-1}) l_{g\varsigma t g t^{-1} \varsigma^{-1}} \\
&=&
\sum_{g, t \in \G} \lambda(g) \frac{\theta([g \vert g^{-1}])}{\hat{\theta}([g \vert \varsigma t g t^{-1} \varsigma^{-1}])} \uptau_{\pi}^{\refl}(\hat{\theta})([\varsigma t] g^{-1}) l_{g\varsigma t g t^{-1} \varsigma^{-1}}.
\end{eqnarray*}
The second equality follows from the definition of $c_{\mathcal{O}}$ and the final from closedness of $\uptau_{\pi}^{\refl}(\hat{\theta})$, as in the proof of Lemma \ref{lem:genGAct}. Define $\mu, \xi \in \Gh \backslash \G$ by $\mu = g \varsigma t$ and $\xi = t^{-1} \varsigma^{-1}$, so that $t = \varsigma^{-1} \xi^{-1}$ and $g = \mu \xi$. Making these substitutions, the coefficient of $l_{g\varsigma t g t^{-1} \varsigma^{-1}} = l_{\mu^2 \xi^2}$ in $R$ is
\[
\lambda(\mu \xi) \hat{\theta}([\mu \xi \vert \xi^{-1} \mu \xi^2])^{-1} \frac{\hat{\theta}([\xi^{-1} \mu \xi^2 \vert \xi^{-1}])}{\hat{\theta}([\xi^{-1} \vert \mu \xi])}
=
\lambda(\mu \xi) \hat{\theta}([\mu \vert \mu])^{-1} \hat{\theta}([\xi \vert \xi])^{-1} \hat{\theta}([\mu^2 \vert \xi^2])^{-1}.
\]
It follows that
\[
R =\sum_{\mu, \xi \in \Gh \backslash \G} \lambda(\mu \xi) \hat{\theta}([\mu \vert \mu])^{-1} \hat{\theta}([\xi \vert \xi])^{-1} \hat{\theta}([\mu^2 \vert \xi^2])^{-1} l_{\mu^2 \xi^2},
\]
which is exactly $\nu_{(\hat{\theta},\lambda)}^2$.
\end{proof}

\subsubsection{Open/closed coherence}

Note that Theorem \ref{thm:LefGroupAlg} verifies equation \eqref{eq:LefInd}.

\begin{Prop}
The maps $\bb^{\bullet}$, $\bb_{\bullet}$, $P$ and $p$ satisfy the assumptions of Proposition \ref{prop:orientifoldTFT}.
\end{Prop}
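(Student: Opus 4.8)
Of the four conditions in Proposition~\ref{prop:orientifoldTFT}, the compatibility $\langle P(\phi)\rangle_{P(V)} = \langle\phi\rangle_V$ was verified in the discussion following the statement of Theorem~\ref{thm:twistRealRepTFT}, and equation~\eqref{eq:LefInd} is precisely Theorem~\ref{thm:LefGroupAlg} with crosscap state $Q = \nu_{(\hat{\theta},\lambda)}$. It therefore remains to establish, for all $V \in \Rep^{\theta}(\G)$, the two identities
\[
P \circ \bb_V = \bb_{P(V)} \circ p, \qquad p \circ \bb^V = \bb^{P(V)} \circ P,
\]
where we have fixed $\varsigma \in \Gh \setminus \G$ and abbreviated $P = P^{(\hat{\theta},\lambda,\varsigma)}$ and $p = p^{\varsigma}$. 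By Proposition~\ref{prop:algInvolution} and the $\varsigma$-independence of the duality structure recorded in Section~\ref{sec:RealProjRep}, neither side of either identity depends on the choice of $\varsigma$, so it is harmless to work with a fixed one. Both identities will be proved by the calculus of twisted cocycles.

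For the boundary-bulk identity, the plan is to expand $p^{\varsigma}(\bb^V(\phi))$ using $\bb^V(\phi) = \sum_{g \in \G} \tr_V(\phi \circ \rho_V(g))\, l_g$ and the formula for $p^{\varsigma}$ from Lemma~\ref{lem:genGAct}, and to expand $\bb^{P(V)}(P(\phi))$ using $P(\phi) = \phi^{\vee}$, the action $\rho^{(\hat{\theta},\lambda,\varsigma)}$ on $P(V) = V^{\vee}$ and the identity $\tr_{V^{\vee}}(\psi^{\vee}) = \tr_V(\psi)$. Matching coefficients of $l_h$ and making the change of variable $h = \varsigma g^{-1}\varsigma^{-1}$, then using the $\G$-equivariance of $\phi$ together with cyclicity of the trace to rewrite $\tr_V(\phi \circ \rho_V(\varsigma^{-1}h^{-1}\varsigma))$ as $\uptau(\theta)([\varsigma^{-2}]\varsigma h^{-1}\varsigma^{-1})\,\tr_V(\phi \circ \rho_V(\varsigma h^{-1}\varsigma^{-1}))$, the claim reduces to the scalar identity
\[
\uptau_{\pi}^{\refl}(\hat{\theta})([\varsigma]\varsigma^{-1}h^{-1}\varsigma)\,\uptau(\theta)([\varsigma^{-2}]\varsigma h^{-1}\varsigma^{-1}) = \uptau_{\pi}^{\refl}(\hat{\theta})([\varsigma]h)^{-1}.
\]
This follows from the observation that $\uptau_{\pi}^{\refl}(\hat{\theta})$ restricts to $\uptau(\theta)$ on the even morphisms of $\G \git_R \Gh$, together with the $1$-cocycle property of $\uptau_{\pi}^{\refl}(\hat{\theta})$ recorded in the proof of Lemma~\ref{lem:genGAct}, applied to the composable pair $\varsigma h^{-1}\varsigma^{-1} \xrightarrow{\varsigma^{-2}} \varsigma^{-1}h^{-1}\varsigma \xrightarrow{\varsigma} h$ whose composite morphism $\varsigma^{-1}$ is inverse to the morphism $\varsigma: h \to \varsigma h^{-1}\varsigma^{-1}$.

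For the bulk-boundary identity, since $P$ is $\C$-linear duality on morphisms, $P(\bb_V(f)) = \bb_V(f)^{\vee} = \sum_{g} a_g\,\theta([g \vert g^{-1}])^{-1}\rho_V(g^{-1})^{\vee}$ for $f = \sum_g a_g l_g \in Z(\C^{\theta^{-1}}[\G])$, while $\bb_{P(V)}(p^{\varsigma}(f))$ is obtained by applying the bulk-boundary formula to $p^{\varsigma}(f)$ and expanding $\rho^{(\hat{\theta},\lambda,\varsigma)}$; the latter features $\rho_V(\varsigma^2 g^{-1}\varsigma^{-2})^{\vee}$ where the former has $\rho_V(g^{-1})^{\vee}$. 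The substitution $g \mapsto \varsigma^{-2}g\varsigma^2$ together with the centrality relation for $f$, which expresses $a_{\varsigma^{-2}g\varsigma^2}$ as a loop-transgression multiple of $a_g$, reduces the identity to a scalar equation in $\lambda$, $\uptau(\theta)$, $\uptau_{\pi}^{\refl}(\hat{\theta})$ and values of $\theta$ and $\hat{\theta}$, which is dispatched using \eqref{eq:2cocycleKey}, \eqref{eq:oddConj} and the $1$-cocycle property of $\uptau_{\pi}^{\refl}(\hat{\theta})$.

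The only real difficulty is bookkeeping: one must keep straight the inversion built into $\uptau_{\pi}^{\refl}(\hat{\theta})$ on odd elements, handle conjugation by the odd element $\varsigma$ carefully, and apply $\G$-equivariance of $\phi$ (respectively centrality of $f$) with exactly the correct loop-transgression twist. No idea is needed beyond the cocycle manipulations already used in Lemmas~\ref{lem:genGAct}, \ref{lem:FSClassFun} and \ref{lem:crosscapHelp} and Propositions~\ref{prop:crosscapInv} and \ref{prop:algInvolution}.
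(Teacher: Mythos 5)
Your proposal is correct and follows the same overall strategy as the paper: the trace compatibility and equation \eqref{eq:LefInd} are already in place, and what remains is to verify $P \circ \bb_V = \bb_{P(V)} \circ p$ and $p \circ \bb^V = \bb^{P(V)} \circ P$ by twisted-cocycle calculus. The differences are in the packaging. For $p \circ \bb^V = \bb^{P(V)} \circ P$ your reduction is exactly right: comparing coefficients of $l_h$ does land on the identity $\uptau_{\pi}^{\refl}(\hat{\theta})([\varsigma]\,\varsigma^{-1}h^{-1}\varsigma)\,\uptau(\theta)([\varsigma^{-2}]\,\varsigma h^{-1}\varsigma^{-1}) = \uptau_{\pi}^{\refl}(\hat{\theta})([\varsigma]\,h)^{-1}$, and your groupoid argument (compose $\varsigma\cdot\varsigma^{-2}=\varsigma^{-1}$ and cancel against the morphism $\varsigma\colon h \to \varsigma h^{-1}\varsigma^{-1}$, using normalization $\uptau_{\pi}^{\refl}(\hat{\theta})([e]g)=1$) is a cleaner route than the paper's, which instead uses $\uptau_{\pi}^{\refl}(\hat{\theta})([\varsigma]g)\,\uptau_{\pi}^{\refl}(\hat{\theta})([\varsigma]\,\varsigma g^{-1}\varsigma^{-1})=\uptau_{\pi}^{\refl}(\hat{\theta})([\varsigma^{2}]g)$ together with explicit conjugation of $\rho_V(g)$ by $\rho_V(\varsigma^{2})$ and a coefficient check. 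For $P \circ \bb_V = \bb_{P(V)} \circ p$ the paper has a shortcut you do not use: it verifies $P \circ \bb_V \circ p = \bb_{P(V)}$ in one computation and then composes with $p$, invoking $p^{2}=\id$, which avoids any appeal to centrality of $f$. Your direct comparison of $P(\bb_V(f))$ with $\bb_{P(V)}(p(f))$ also closes: after the substitution $g \mapsto \varsigma^{2}g\varsigma^{-2}$, centrality of $f$, and the $1$-cocycle relation $\uptau_{\pi}^{\refl}(\hat{\theta})([\varsigma]\,\varsigma g \varsigma^{-1}) = \uptau(\theta)([\varsigma^{2}]g^{-1})/\uptau_{\pi}^{\refl}(\hat{\theta})([\varsigma]g^{-1})$, the required scalar identity follows from the two consequences of \eqref{eq:2cocycleKey}, namely $\uptau_{\pi}^{\refl}(\hat{\theta})([\varsigma]g)\,\uptau_{\pi}^{\refl}(\hat{\theta})([\varsigma]g^{-1}) = \theta([\varsigma g^{-1}\varsigma^{-1} \vert \varsigma g \varsigma^{-1}])/\theta([g \vert g^{-1}])$ (odd $\omega=\varsigma$) and $\uptau(\theta)([\varsigma^{2}]g)\,\uptau(\theta)([\varsigma^{2}]g^{-1}) = \theta([\varsigma^{2} g\varsigma^{-2} \vert \varsigma^{2} g^{-1} \varsigma^{-2}])/\theta([g \vert g^{-1}])$ (even $\omega=\varsigma^{2}$). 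As written, though, that paragraph of yours is only a plan; to make it a proof you should display and verify this identity (or simply adopt the paper's $p^{2}=\id$ trick). The preliminary remark that neither side depends on $\varsigma$ is not quite accurate for the bulk-boundary identity, since $P(V)$ itself depends on $\varsigma$, but this is harmless because you fix $\varsigma$ throughout, as does the paper.
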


\begin{proof}
We compute
\[
P \circ \bb_V (p(\sum_{g \in \G} a_g l_g))
=
\sum_{g \in \G} a_g \frac{\uptau_{\pi}^{\refl}(\hat{\theta})([\varsigma]g)}{\lambda(g)} \rho_V(\varsigma g^{-1} \varsigma^{-1})^{\vee}
=
\bb_{P(V)}(\sum_{g \in \G} a_g l_g),
\]
that is, $P \circ \bb_V \circ p = \bb_{P(V)}$. Since $p$ is an involution, this implies $P \circ \bb_V = \bb_{P(V)} \circ p$.

We also have
\begin{eqnarray*}
p \circ \bb^V(\phi)
&=&
\sum_{g \in \G} \frac{\uptau_{\pi}^{\refl}(\hat{\theta})([\varsigma] g)}{\lambda(g)} \tr_V (\phi \circ \rho_V(g^{-1}))  l_{\varsigma g^{-1} \varsigma^{-1}}
\end{eqnarray*}
and
\begin{eqnarray*}
\bb^{P(V)} ( P(\phi))
&=&
\sum_{g \in \G} \frac{\lambda(g)}{\uptau_{\pi}^{\refl}(\hat{\theta})([\varsigma]g)} \tr_{V^{\vee}} (\phi^{\vee} \circ \rho_V(\varsigma g^{-1} \varsigma^{-1})^{\vee}) l_g \\
&=&
\sum_{g \in \G} \frac{\lambda(\varsigma g^{-1} \varsigma^{-1})}{\uptau_{\pi}^{\refl}(\hat{\theta})(\varsigma g^{-1} \varsigma^{-1})} \tr_{V^{\vee}} (\phi^{\vee} \circ \rho_V(\varsigma^2 g \varsigma^{-2})^{\vee}) l_{\varsigma g^{-1} \varsigma^{-1}}.
\end{eqnarray*}
Closedness of $\uptau_{\pi}^{\refl}(\hat{\theta})$ implies
\[
\uptau_{\pi}^{\refl}(\hat{\theta})([\varsigma] g)
\uptau_{\pi}^{\refl}(\hat{\theta})([\varsigma] \varsigma g^{-1} \varsigma^{-1}) = \uptau_{\pi}^{\refl}(\hat{\theta})([\varsigma^2] g).
\]
Since $\phi$ is $\G$-equivariant and 
\[
\rho_V(\varsigma^2) \circ \rho_V(g) \circ \rho_V(\varsigma^{-2})
=
\theta([\varsigma^2 \vert g]) \theta([\varsigma^2 g \vert \varsigma^{-2}])
\rho_V(\varsigma^2 g  \varsigma^{-2}),
\]
we have
\[
\tr_{V^{\vee}} (\phi^{\vee} \circ \rho_V(\varsigma^2 g  \varsigma^{-2})^{\vee})
=
\frac{\theta([\varsigma^{-2} \vert \varsigma^2])}{\theta([\varsigma^2 \vert g]) \theta([\varsigma^2 g \vert \varsigma^{-2}])} \tr_V (\phi \circ \rho_V(g)).
\]
The coefficient of $\tr_V (\phi \circ \rho_V(g)) l_{\varsigma g^{-1} \varsigma^{-1}}$ in $\bb^{P(V)} (P(\phi))$ is therefore
\[
\lambda(g)^{-1} \frac{\uptau_{\pi}^{\refl}(\hat{\theta})([\varsigma] g)}{\uptau_{\pi}^{\refl}(\hat{\theta})([\varsigma^2] g)} \frac{\theta([\varsigma^{-2} \vert \varsigma^2])}{\theta([\varsigma^2 \vert g]) \theta([\varsigma^2 g \vert \varsigma^{-2}])}
=
\frac{\uptau_{\pi}^{\refl}(\hat{\theta})([\varsigma] g)}{\lambda(g)}.
\]
We conclude that $p \circ \bb^V = \bb^{P(V)} \circ P$.
\end{proof}

This completes the proof of Theorem \ref{thm:twistRealRepTFT}.

\subsection{Partition functions}
\label{sec:partFun}

The algebraic construction of $\TFT_{(\Gh,\hat{\theta},\lambda)}$ allows for the explicit computation of the partition function of an arbitrary surface.

\subsubsection{Closed surfaces}

For the real projective plane, we have
\[
\TFT_{(\Gh,\hat{\theta},\lambda)}(\mathbb{RP}^2)
=
\langle \nu_{(\hat{\theta},\lambda)} \rangle_{\G} =
\frac{1}{\vert \G \vert} \sum_{\substack{\mu \in \Gh \backslash \G \\ \mu^2 = e }} \frac{\lambda(\mu)}{\hat{\theta}([\mu \vert \mu])},
\]
the first equality reflecting that $\mathbb{RP}^2$ is a M\"{o}bius strip glued to a disk. In particular, $\TFT_{(\Gh,\hat{\theta},\lambda)}(\mathbb{RP}^2)$ vanishes unless $\pi: \Gh \rightarrow \Ctwo$ splits. Realizing the Klein bottle as two cylinders glued together, with one gluing by circle reflection, and using that $Z(\C^{\theta^{-1}}[\G]) = \C^{\theta^{-1}}[\G]^{\G}$ (see the proof of Proposition \ref{prop:algInvolution}), we compute
\[
\TFT_{(\Gh,\hat{\theta},\lambda)}(\mathbb{K})
=
\frac{1}{\vert \G \vert} \sum_{h \in \G} \tr_{\C^{\theta^{-1}}[\G]} p^{h\varsigma} =
\frac{1}{\vert \G \vert} \sum_{\substack{g \in \G \\ \omega \in \Gh \backslash \G \\ \omega g^{-1} \omega^{-1} = g}} \frac{1}{\lambda(g) \hat{\theta}([g^{-1} \vert g])} \frac{\hat{\theta}([g \vert \omega])}{\hat{\theta}([\omega \vert g^{-1}])}.
\]
In general, a formula for the partition function of a closed connected non-orientable surface $\Sigma$ can be written in terms of $\hat{\theta}$-weighted counts of $\Ctwo$-graded homomorphisms from the fundamental group of the orientation double cover of $\Sigma^{\ori} \rightarrow \Sigma$ to $\Gh$. See \cite[\S 4.4]{mbyoung2020}. Alternatively, the primitive orthogonal idempotents of the semisimple algebra $Z(\C^{\theta^{-1}}[\G])$ can be used to evaluate the partition functions. Proceeding in this way and writing the crosscap state as in Corollary \ref{cor:FSIndDecomp}, we find
\[
\TFT_{(\Gh,\hat{\theta},\lambda)}(\Sigma)
=
\sum_{\substack{ V \in \Irr^{\theta}(\G) \\ P^{(\hat{\theta},\lambda)}(V) \simeq V}} \left(\frac{\langle \chi_V, \nu_{(\hat{\theta},\lambda)} \rangle_G \dim_{\C} V}{\vert \G \vert} \right)^{\chi(\Sigma)}.
\]
For example,
\[
\TFT_{(\Gh,\hat{\theta},\lambda)}(\mathbb{RP}^2)
=
\frac{1}{\vert \G \vert}
\sum_{\substack{ V \in \Irr^{\theta}(\G) \\ P^{(\hat{\theta},\lambda)}(V) \simeq V}} \langle \chi_V, \nu_{(\hat{\theta},\lambda)} \rangle_G \dim_{\C} V
\]
and
\[
\TFT_{(\Gh,\hat{\theta},\lambda)}(\mathbb{K})
=
\vert \{ V \in \Irr^{\theta}(\G) \mid P^{(\hat{\theta},\lambda)}(V) \simeq V\} \vert.
\]
Equating these expressions for the partition function of $\Sigma$ relates weighted counts of $\Ctwo$-graded homomorphisms $\pi_1(\Sigma^{\ori}) \rightarrow \Gh$ to Real character theoretic sums. Various specializations of these identities are known \cite{frobenius1906,karimipour1997,mulase2005,snyder2017,barkeshli2020,mbyoung2020} and provide non-orientable counterparts of Mednykh's formulae \cite{mednykh1978}.

\subsubsection{Surfaces with boundary}

Let $\Sigma$ be a compact connected non-orientable surface with $b \geq 1$ boundary components. To begin, label each boundary component by the same irreducible twisted Real representation $V \in \Rep^{\theta}(\G)^{\tilde{h}\Ctwo}$. The partition function
\[
\TFT_{(\Gh,\hat{\theta},\lambda)}(\Sigma;V)
: \Hom_{\Rep^{\theta}(\G)^{\tilde{h}\Ctwo}}(V,V)^{\otimes b} \rightarrow \C
\]
can be computed as follows. By Proposition \ref{prop:RealRestr}, there are two cases to consider. If $V$ is irreducible as a twisted representation, then the primitive orthogonal idempotent of $\C^{\theta^{-1}}[\G]$ corresponding to $V$ is $e_{V^{\vee}} = \frac{\dim_{\C} V}{\vert \G \vert} \chi_{V}$, whence $\chi_V^b = \left(\frac{\dim_{\C} V}{\vert \G \vert}\right)^{-b} e_{V^{\vee}}$. Using this and the fact that $\langle \chi_V, \nu_{(\hat{\theta},\lambda))} \rangle_{\G} =1$ (see Corollary \ref{cor:FSInd}), we compute
\[
\TFT_{(\Gh,\hat{\theta},\lambda)}(\Sigma;V)(\id_V^{\otimes b})
=
\left(\frac{\dim_{\C} V}{\vert \G \vert} \right)^{\chi(\Sigma)}.
\]
If instead the underlying twisted representation of $V$ is reducible, then $V \simeq H^{(\hat{\theta},\lambda)}(U)$ with $U \in \Rep^{\theta}(\G)$ irreducible. It follows that $\chi_{H^{(\hat{\theta},\lambda)}(U)}=\frac{\vert \G \vert}{\dim_{\C} U} (e_{U^{\vee}} + e_{P^{(\hat{\theta},\lambda)}(U)^{\vee}})$ and
\[
\chi_{H^{(\hat{\theta},\lambda)}(U)}^{b}
=\left( \frac{\dim_{\C} U}{\vert \G \vert} \right)^{-b} (e_{U^{\vee}} + e_{P^{(\hat{\theta},\lambda)}(U)^{\vee}}).
\]
There are two further sub-cases:
\begin{itemize}
\item $P^{(\hat{\theta},\lambda)}(U) \not\simeq U$, in which case $\langle \chi_U, \nu_{(\hat{\theta},\lambda)}\rangle_{\G} = \langle \chi_{P^{(\hat{\theta},\lambda)}(U)} \nu_{(\hat{\theta},\lambda)}\rangle_{\G}
= 0$. Since $\nu_{(\hat{\theta},\lambda)}$ has no $\chi_U$ or $\chi_{P^{(\hat{\theta},\lambda)}(U)}$ components, we find
\[
\TFT_{(\Gh,\hat{\theta},\lambda)}(\Sigma;H^{(\hat{\theta},\lambda)}(U))(\id_{H^{(\hat{\theta},\lambda)}(U)}^{\otimes b})=0.
\]

\item $P^{(\hat{\theta},\lambda)}(U) \simeq U$, in which case $\langle \chi_U, \nu_{(\hat{\theta},\lambda)}\rangle_{\G}= -1$ and
\[
\TFT_{(\Gh,\hat{\theta},\lambda)}(\Sigma;H^{(\hat{\theta},\lambda)}(U))(\id_{H^{(\hat{\theta},\lambda)}(U)}^{\otimes b})
=
2\left(-\frac{\dim_{\C} U}{\vert \G \vert} \right)^{\chi(\Sigma)}.
\]
\end{itemize}

A formula in the general case, with boundary components labelled by arbitrary twisted Real representations $V_i$, $i=1, \dots, b$, can be deduced from the previous formulae by writing $\chi_{V_i}$ as a linear combination of primitive idempotents.

\bibliographystyle{amsalpha}

\newcommand{\etalchar}[1]{$^{#1}$}
\providecommand{\bysame}{\leavevmode\hbox to3em{\hrulefill}\thinspace}
\providecommand{\MR}{\relax\ifhmode\unskip\space\fi MR }
\providecommand{\MRhref}[2]{%
  \href{http://www.ams.org/mathscinet-getitem?mr=#1}{#2}
}
\providecommand{\href}[2]{#2}

\end{document}